\newtheorem{theorem}{Theorem}[section]                    
\newtheorem{proposition}[theorem]{Proposition}            
\newtheorem{corollary}[theorem]{Corollary}                
\newtheorem{lemma}[theorem]{Lemma}
\newtheorem{remark}[theorem]{Remark}
\newtheorem{definition}[theorem]{Definition}
\begin{document}
\title[Orbital Free Entropy, revisited]{Orbital Free Entropy, revisited}
\author[Y.~Ueda]{Yoshimichi Ueda}
\address{
Graduate School of Mathematics, 
Kyushu University, 
Fukuoka, 819-0395, Japan
}
\email{ueda@math.kyushu-u.ac.jp}
\thanks{Supported by Grant-in-Aid for Scientific Research (C) 24540214.}
\thanks{AMS subject classification: Primary:\,46L54;
secondary:\,52C17, 28A78, 94A17.}
\thanks{Keywords:\,Free probability; free entropy; free entropy dimension; mutual information; non-commutative random variable; free unitary Brownian motion.}
\dedicatory{Dedicated to Professor Yasuo Watatani on the occasion of his 60th birthday}
\begin{abstract} We give another definition of orbital free entropy introduced by Hiai, Miyamoto and us, which does not need the hyperfiniteness assumption for each given random multi-variable. The present definition is somehow related to one of its several recent approaches due to Biane and Dabrowski, but can be shown to agree with the original definition completely and is much closer to the original approach.  
\end{abstract} 
\maketitle

\allowdisplaybreaks{

\section{Introduction}

Let $(\mathcal{M},\tau)$ be a tracial non-commutative $W^*$-probability space throughout the present paper. We call any {\it finite} tuple $\mathbf{X} = (X_1,\dots,X_r)$ of self-adjoint elements in $\mathcal{M}$ a (non-commutative) random multi-variable. When the tuple $\mathbf{X}$ consists of just one element, i.e., $\mathbf{X} = (X)$, the random multi-variable should be regarded as nothing less than a (non-commutative) random variable $X$. We want to measure how far the joint distribution of given random multi-variables $\mathbf{X}_1,\dots,\mathbf{X}_n$ is from that of freely independent copies of those, i.e., a freely independent family $\mathbf{Y}_1,\dots,\mathbf{Y}_n$ such that each $\mathbf{Y}_i$ has the same joint distribution as that of $\mathbf{X}_i$. The first one of such quantities, called the free mutual information, $i^*(W^*(\mathbf{X}_1)\,;\,\dots\,;\,W^*(\mathbf{X}_n))$, was introduced by Voiculescu \cite{Voiculescu:AdvMath99} in the so-called microstate-free approach to free entropy, where $W^*(\mathbf{X}_i)$ denotes the von Neumann subalgebra of $\mathcal{M}$ generated by (the random variables in) $\mathbf{X}_i$. Almost 10 years later then, the second, called the orbital free entropy, $\chi_\mathrm{orb}(\mathbf{X}_1,\dots,\mathbf{X}_n)$, was introduced by Hiai, Miyamoto and us \cite{HiaiMiyamotoUeda:IJM09} in the so-called microstate approach. Those two quantities possess many properties in common, and hence we may conjecture that those essentially define the same quantity under a suitable set of assumptions. In the direction a heuristic argument supporting the conjecture was given in \cite{HiaiUeda:AIHP09} when $n=2$ and each of those $\mathbf{X}_1,\mathbf{X}_2$ consists of just one projection, and soon after then it was confirmed in a different way with the help of M.~Izumi under a certain set of assumptions on those projections. Hence we have thought that the conjecture is plausible (but, of course, very difficult like the question of whether or not $\chi = \chi^*$ holds).  On the other hand, those quantities $i^*$ and $\chi_\mathrm{orb}$ have individual defects: Nobody knows whether or not 
$$
\chi^*(X,Y) = -i^*(W^*(X)\,;\,W^*(Y)) + \chi^*(X) + \chi^*(Y)
$$
holds; $\chi_\mathrm{orb}$ is unfortunately defined only when each $W^*(\mathbf{X}_i)$ is hyperfinite, though it satisfies that 
\begin{equation}\label{Eq0-1}
\chi(X_1,\dots,X_n) = \chi_\mathrm{orb}(X_1,\dots,X_n) + \sum_{i=1}^n \chi(X_i). 
\end{equation}
Recently Biane and Dabrowski \cite{BianeDabrowski:AdvMath13} proposed and developed various approaches of $\chi_\mathrm{orb}$ that does no longer need the hyperfiniteness assumption on the $W^*(\mathbf{X}_i)$'s, though they identified 
those with original $\chi_\mathrm{orb}$ only when $W^*(\mathbf{X}_1\sqcup\cdots\sqcup\mathbf{X}_n)$ is a factor (and, of course, each $W^*(\mathbf{X}_i)$ is hyperfinite). Motivated by their work we give another definition of original $\chi_\mathrm{orb}$ that works without the hyperfiniteness assumption, and prove almost all the expected properties of $\chi_\mathrm{orb}$ including the following: The orbital free entropy $\chi_\mathrm{orb}(\mathbf{X}_1,\dots,\mathbf{X}_n)$ depends only on the $W^*(\mathbf{X}_i)$'s, and moreover $\chi_\mathrm{orb}(\mathbf{X}_1,\dots,\mathbf{X}_n) = 0$ if and only if the $\mathbf{X}_i$'s have f.d.a.~and are freely independent. However, we do not know whether or not the counterpart of \eqref{Eq0-1} holds in general, and at the moment we can prove only the inequality  
\begin{equation}\label{Eq0-2}
\chi(\mathbf{X}_1\sqcup\cdots\sqcup\mathbf{X}_n) \leq \chi_\mathrm{orb}(\mathbf{X}_1,\dots,\mathbf{X}_n) + \sum_{i=1}^n\chi(\mathbf{X}_i). 
\end{equation}
It is apparently our task for the future to answer the question of whether or not equality in \eqref{Eq0-2} holds in general. Here we would like to emphasize that many arguments given in the present paper originate in \cite{HiaiMiyamotoUeda:IJM09}, and the technical ingredients here, similarly to \cite{HiaiMiyamotoUeda:IJM09}, are only three non-trivial previous results, two of which are due to Voiculescu and the other is due to Jung, summarized in \cite[\S1]{HiaiMiyamotoUeda:IJM09}. 

With the new definition of $\chi_\mathrm{orb}$ one can generalize orbital free entropy dimension $\delta_{0,\mathrm{orb}}$ to arbitrary random multi-variables, and we see that Jung's covering/packing approach still works well without the hyperfiniteness assumption. However, the formula \cite[Theorem 5.6]{HiaiMiyamotoUeda:IJM09}, analogous to \eqref{Eq0-1}, for $\delta_{0,\mathrm{orb}}$ and free entropy dimension $\delta_0$ is out of reach without the hyperfiniteness assumption. Moreover, we do not know at the moment whether or not even the analogous inequality to \eqref{Eq0-2} holds for $\delta_{0,\mathrm{orb}}$ and $\delta_0$ in general. Answering this should also be our task for the future.

In the present paper, we denote all the $N\times N$ matrices by $M_N$, all the self-adjoint $N\times N$ matrices by $M_N^\mathrm{sa}$, and tuples of such matrices by letters $\mathbf{A},\mathbf{B}$, etc., called multi-matrices, while letters $\mathbf{X},\mathbf{Y}$, etc.~stand for random multi-variables as before. The normalized trace on the $N\times N$ matrices $M_N$ is denoted by $\mathrm{tr}_N$. The Lebesgue measure on $M_N^\mathrm{sa} \cong \mathbb{R}^{N^2}$ is denoted by $\Lambda_N$. The operator norm is written as $\Vert-\Vert_\infty$, while the $p$-norm ($1 \leq p < \infty$) determined by $\varphi = \tau$ or $\mathrm{tr}_N$ is defined to be $\Vert x \Vert_{p,\varphi} := \varphi(|x|^p)^{1/p}$. Write $(M_N^\mathrm{sa})_R := \{ A \in M_N^\mathrm{sa}\,|\,\Vert A\Vert_\infty \leq R \}$. The $N\times N$ unitary group and the $N\times N$ special unitary group are denoted by $\mathrm{U}(N)$ and $\mathrm{SU}(N)$, respectively, and their unique Haar probability measures by $\gamma_{\mathrm{U}(N)}$ and $\gamma_{\mathrm{SU}(N)}$, respectively. The regular Borel probability measures on a locally compact Hausdorff space $\mathcal{X}$ is denoted by $\mathcal{P}(\mathcal{X})$.

\section{Orbital Free Entropy $\chi_\mathrm{orb}(\mathbf{X}_1,\dots,\mathbf{X}_n)$} 

Let $\mathbf{X}_i = (X_{i1},\dots,X_{ir(i)})$, $1 \leq i \leq n$, be arbitrary random multi-variables in $(\mathcal{M},\tau)$ ({\it n.b.}~$r(i)$ is finite as mentioned in the introduction), and $R > 0$ be given possibly with $R=\infty$. Let $m \in \mathbb{N}$ and $\delta > 0$ be given. The set of matricial microstates $\Gamma_R(\mathbf{X}_1\sqcup\cdots\sqcup\mathbf{X}_n\,;\,N,m,\delta)$ is defined to be all the $n$ tuples of $N\times N$ multi-matrices $\mathbf{A}_i = (A_{i1},\dots,A_{ir(i)}) \in ((M_N^\mathrm{sa})_R)^{r(i)}$, $1 \leq i \leq n$, such that 
\begin{equation*}
\left|\mathrm{tr}_N(A_{i_1 j_1} \cdots A_{i_l j_l}) - \tau(X_{i_1 j_1} \cdots X_{i_l j_l})\right| < \delta
\end{equation*}
whenever $1 \leq i_k \leq n$, $1 \leq j_k \leq r(i_k)$, $1 \leq k \leq l$ and $1 \leq l \leq m$.  Similarly the set of matricial microstates $\Gamma_R(\mathbf{X}_i\,;\,N,m,\delta)$ are defined for each random multi-variable $\mathbf{X}_i$, $1 \leq i \leq n$. Consider the map $\Phi_N : \mathrm{U}(N)^n \times \left(\prod_{i=1}^n \big(M_N^\mathrm{sa}\big)^{r(i)}\right) \longrightarrow \prod_{i=1}^n \big(M_N^\mathrm{sa}\big)^{r(i)}$ defined by 
\begin{equation*}
\Phi_N((U_i)_{i=1}^n,(\mathbf{A}_i)_{i=1}^n) := (U_i\mathbf{A}_i U_i^*)_{i=1}^n
\end{equation*}
with $U_i\mathbf{A}_i U_i^* := (U_i A_{i1}U_i^*,\dots,U_i A_{ir(i)} U_i^*)$. Here is a new definition of the orbital free entropy $\chi_\mathrm{orb}(\mathbf{X}_1,\dots,\mathbf{X}_n)$ \cite{HiaiMiyamotoUeda:IJM09}. 

\begin{definition}\label{D-2-1} For each $N, m \in \mathbb{N}$, $\delta > 0$ and $\mu \in \mathcal{P}(\prod_{i=1}^n(M_N^\mathrm{sa})^{r(i)})$ we define 
\begin{align*} 
\chi_{\mathrm{orb},R}(\mathbf{X}_1,\dots,\mathbf{X}_n\,;\,N,m,\delta\,;\,\mu)
&:= \log\big((\gamma_{\mathrm{U}(N)}^{\otimes n}\otimes\mu)(\Phi_N^{-1}(\Gamma_R(\mathbf{X}_1\sqcup\cdots\sqcup\mathbf{X}_n\,;\,N,m,\delta)))\big), \\
\chi_{\mathrm{orb},R}(\mathbf{X}_1,\dots,\mathbf{X}_n\,;\,N,m,\delta) 
&:=
\sup_\mu\chi_{\mathrm{orb},R}(\mathbf{X}_1,\dots,\mathbf{X}_n\,;\,N,m,\delta\,;\,\mu), \\
\chi_{\mathrm{orb},R}(\mathbf{X}_1,\dots,\mathbf{X}_n\,;\,m,\delta) 
&:= 
\limsup_{N\to\infty}\frac{1}{N^2}\chi_{\mathrm{orb},R}(\mathbf{X}_1,\dots,\mathbf{X}_n\,;\,N,m,\delta), \\
\chi_{\mathrm{orb},R}(\mathbf{X}_1,\dots,\mathbf{X}_n) 
&:= 
\lim_{\substack{m\to\infty \\ \delta \searrow 0}}\,\chi_{\mathrm{orb},R}(\mathbf{X}_1,\dots,\mathbf{X}_n\,;\,m,\delta) \\
&\,= 
\inf_{\substack{m\in\mathbb{N} \\ \delta > 0}}\,\chi_{\mathrm{orb},R}(\mathbf{X}_1,\dots,\mathbf{X}_n\,;\,m,\delta), \\
\chi_\mathrm{orb}(\mathbf{X}_1,\dots,\mathbf{X}_n) 
&:= \sup_{0 < R < \infty}\chi_{\mathrm{orb},R}(\mathbf{X}_1,\dots,\mathbf{X}_n)
\end{align*} 
with $\log0:= -\infty$. 
\end{definition} 

It will be seen in Proposition \ref{P-2-4} that the above quantity turns out to agree with the quantity appeared in \cite[Theorem 7.3 (7)]{BianeDabrowski:AdvMath13}. Although the reformulation of $\chi_\mathrm{orb}$ given there is probably easier to use in many actual computations of $\chi_\mathrm{orb}$, the above definition is directly related to Voiculescu's microstate free entropy $\chi$ and has a bit advantage in some arguments, see Proposition \ref{P-2-8} and Remark \ref{R-2-9} (also Theorem \ref{T-2-6} (5)). Originally the present work started with examining whether or not the fruitful idea of `random microstates' due to Biane and Dabrowski \cite{BianeDabrowski:AdvMath13} enables us to develop a multi-matrix counterpart of the matrix diagonalization (c.f.~\cite[Eq.(1.1)]{HiaiMiyamotoUeda:IJM09}) which plays a particularly important r\^{o}le in our previous analysis. In conclusion, we noticed that the simple fact given as Lemma \ref{L-2-2} below should play a key r\^{o}le in any possible approach, and immediately observed what we explain in Remark \ref{R-2-9}. We believe that the present approach that we are developing can be a basis for future studies of orbital free entropy.     

\medskip
Let us first prove that the new definition of $\chi_\mathrm{orb}$ perfectly agrees with the previous one when each $W^*(\mathbf{X}_i)$ is hyperfinite. Recall the necessary notations that appear in the previous definition of $\chi_\mathrm{orb}$. For given multi-matrices $\mathbf{A}_i = (A_{ij})_{j=1}^{r(i)} \in (M_N^\mathrm{sa})^{r(i)}$, $1 \leq i \leq n$, the set of orbital microstates $\Gamma_\mathrm{orb}(\mathbf{X}_1,\dots,\mathbf{X}_n:(\mathbf{A}_i)_{i=1}^n\,;\,N,m,\delta)$ is defined to be all $(U_i)_{i=1}^n \in \mathrm{U}(N)^n$ such that $(U_i\mathbf{A}_i U_i^*)_{i=1}^n \in \Gamma_\infty(\mathbf{X}_1\sqcup\cdots\sqcup\mathbf{X}_n\,;\,N,m,\delta)$. It is trivial that if some $\mathbf{A}_i$ sits in $((M_N^\mathrm{sa})_R)^{r(i)}\setminus\Gamma_R(\mathbf{X}_i\,;\,N,m,\delta)$, then $\Gamma_\mathrm{orb}(\mathbf{X}_1,\dots,\mathbf{X}_n:(\mathbf{A}_i)_{i=1}^n\,;\,N,m,\delta) = \emptyset$. The next simple fact explains a relation between matricial microstate spaces and orbital microstate spaces; thus we display it as a separate lemma. 

\begin{lemma}\label{L-2-2} For every $R>0$ possibly with $R = \infty$ the function 
$$
(\mathbf{A}_i)_{i=1}^n \in \prod_{1 \leq i \leq n} ((M_N^\mathrm{sa})_R)^{r(i)} \mapsto \gamma_{\mathrm{U}(N)}^{\otimes n}\big(\Gamma_\mathrm{orb}(\mathbf{X}_1,\dots,\mathbf{X}_n:(\mathbf{A}_i)_{i=1}^n\,;\,N,m,\delta)\big)
$$   
is Borel, and 
\begin{align*}
&(\gamma_{\mathrm{U}(N)}^{\otimes n}\otimes\mu)(\Phi_N^{-1}(\Gamma_R(\mathbf{X}_1\sqcup\cdots\sqcup\mathbf{X}_n\,;\,N,m,\delta))) \\ 
&= 
\int_{\prod_{i=1}^n ((M_N^\mathrm{sa})_R)^{r(i)}} \gamma_{\mathrm{U}(N)}^{\otimes n}\big(\Gamma_\mathrm{orb}(\mathbf{X}_1,\dots,\mathbf{X}_n:(\mathbf{A}_i)_{i=1}^n\,;\,N,m,\delta)\big)\,d\mu \\
&= 
\int_{\prod_{i=1}^n \Gamma_R(\mathbf{X}_i\,;\,N,m,\delta)} \gamma_{\mathrm{U}(N)}^{\otimes n}\big(\Gamma_\mathrm{orb}(\mathbf{X}_1,\dots,\mathbf{X}_n:(\mathbf{A}_i)_{i=1}^n\,;\,N,m,\delta)\big)\,d\mu  
\end{align*} 
holds for every probability measure $\mu \in \mathcal{P}(\prod_{i=1}^n(M_N^\mathrm{sa})^{r(i)})$. 
\end{lemma}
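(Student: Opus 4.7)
The plan is to reduce the lemma to a routine application of Fubini--Tonelli, once the section of $\Phi_N^{-1}(\Gamma_R(\mathbf{X}_1\sqcup\cdots\sqcup\mathbf{X}_n\,;\,N,m,\delta))$ at a fixed $(\mathbf{A}_i)_{i=1}^n$ is correctly identified with $\Gamma_\mathrm{orb}(\mathbf{X}_1,\dots,\mathbf{X}_n:(\mathbf{A}_i)_{i=1}^n\,;\,N,m,\delta)$.

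First I would observe that $\Phi_N$ is continuous (polynomial in its entries) and that $\Gamma_R(\mathbf{X}_1\sqcup\cdots\sqcup\mathbf{X}_n\,;\,N,m,\delta)$ is the intersection of the closed set $\prod_i ((M_N^\mathrm{sa})_R)^{r(i)}$ with the open set determined by the finitely many strict moment inequalities; hence $\Phi_N^{-1}(\Gamma_R(\mathbf{X}_1\sqcup\cdots\sqcup\mathbf{X}_n\,;\,N,m,\delta))$ is Borel in $\mathrm{U}(N)^n \times \prod_i (M_N^\mathrm{sa})^{r(i)}$. Tonelli's theorem then supplies, automatically, that each section is Borel in $\mathrm{U}(N)^n$, that the function $(\mathbf{A}_i)_{i=1}^n \mapsto \gamma_{\mathrm{U}(N)}^{\otimes n}(\text{section})$ is Borel measurable, and that its integral against $\mu$ yields the product measure $(\gamma_{\mathrm{U}(N)}^{\otimes n}\otimes\mu)(\Phi_N^{-1}(\Gamma_R(\mathbf{X}_1\sqcup\cdots\sqcup\mathbf{X}_n\,;\,N,m,\delta)))$.

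Next I would identify the section explicitly. For $(\mathbf{A}_i)_{i=1}^n \in \prod_i ((M_N^\mathrm{sa})_R)^{r(i)}$, unitary conjugation preserves operator norm, so $U_i \mathbf{A}_i U_i^*$ automatically satisfies the same norm bound, and the condition $\Phi_N((U_i),(\mathbf{A}_i)) \in \Gamma_R(\mathbf{X}_1\sqcup\cdots\sqcup\mathbf{X}_n\,;\,N,m,\delta)$ reduces to the moment conditions alone; this is the very definition of $\Gamma_\mathrm{orb}$ (phrased through $\Gamma_\infty$). For $(\mathbf{A}_i)$ outside $\prod_i ((M_N^\mathrm{sa})_R)^{r(i)}$ the section is empty. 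Combining this identification with Tonelli yields the first equality, together with the Borel measurability claim.

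Finally, for the second equality I would invoke the unitary invariance of the normalized trace,
\[
\mathrm{tr}_N(U_i A_{ij_1} U_i^* \cdots U_i A_{ij_l} U_i^*) = \mathrm{tr}_N(A_{ij_1} \cdots A_{ij_l}).
\]
If some $\mathbf{A}_i$ fails to lie in $\Gamma_R(\mathbf{X}_i\,;\,N,m,\delta)$, then $(U_i\mathbf{A}_i U_i^*)_{i=1}^n$ violates a moment inequality already present in the definition of $\Gamma_R(\mathbf{X}_1\sqcup\cdots\sqcup\mathbf{X}_n\,;\,N,m,\delta)$ for every choice of $(U_i)$, so $\Gamma_\mathrm{orb}(\ldots) = \emptyset$ and the integrand vanishes off $\prod_i \Gamma_R(\mathbf{X}_i\,;\,N,m,\delta)$. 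There is no genuine obstacle: the whole lemma amounts to bookkeeping around Fubini--Tonelli, the only mild subtlety being the automatic norm bound that makes the switch between $\Gamma_R$ and $\Gamma_\infty$ harmless in the section identification.
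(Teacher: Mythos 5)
Your proof is correct and follows essentially the same route as the paper's: identify the $(\mathbf{A}_i)_{i=1}^n$-section of $\Phi_N^{-1}(\Gamma_R(\mathbf{X}_1\sqcup\cdots\sqcup\mathbf{X}_n\,;\,N,m,\delta))$ with the orbital microstate set (noting that unitary conjugation preserves the operator-norm cut-off and that unitary invariance of $\mathrm{tr}_N$ forces the section to be empty off $\prod_i\Gamma_R(\mathbf{X}_i\,;\,N,m,\delta)$), then invoke Fubini--Tonelli. You merely spell out the routine measurability details that the paper leaves implicit by its reference to Rudin.
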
 
\begin{proof} For each $(\mathbf{A}_i)_{i=1}^n \in \prod_{i=1}^n ((M_N^\mathrm{sa})_R)^{r(i)}$ it is trivial that the $(\mathbf{A}_i)_{i=1}^n$-section 
\begin{align*} 
\big\{ (U_i)_{i=1} \in \mathrm{U}(N)^n&\,|\, ((U_i)_{i=1}^n,(\mathbf{A}_i)_{i=1}^n) \in  \Phi_N^{-1}(\Gamma_R(\mathbf{X}_1\sqcup\cdots\sqcup\mathbf{X}_n\,;\,N,m,\delta))\big\} \\
&= \big\{((U_i)_{i=1}^n \in \mathrm{U}(N)^n\,|\,(U_i\mathbf{A}_i U_i^*)_{i=1}^n \in \Gamma_R(\mathbf{X}_1\sqcup\cdots\sqcup\mathbf{X}_n\,;\,N,m,\delta)\big\}
\end{align*} 
coincides with $\Gamma_\mathrm{orb}(\mathbf{X}_1,\dots,\mathbf{X}_n:(\mathbf{A}_i)_{i=1}^n\,;\,N,m,\delta)$, which clearly becomes the empty set if $(\mathbf{A}_i)_{i=1}^n \in \prod_{i=1}^n ((M_N^\mathrm{sa})_R)^{r(i)}\setminus\prod_{i=1}^n \Gamma_R(\mathbf{X}_i\,;\,N,m,\delta)$ as remarked before. Hence the desired assertion immediately follows by the Fubini theorem or the definition of product measures, see e.g.~\cite[Chap.~8]{Rudin:RedBook}.
\end{proof}

When $W^*(\mathbf{X}_i)$ is hyperfinite, one can choose multi-matrices $\Xi_i(N) = (\xi_{ij}(N))_{j=1}^{r(i)} \in (M_N^\mathrm{sa})^{r(i)}$ in such a way that $\Vert\xi_{ij}(N)\Vert_\infty \leq \Vert X_{ij}\Vert_\infty$ and $\lim_{N\to\infty}\frac{1}{N}\mathrm{Tr}_N(P(\Xi_i(N)))   = \tau(P(\mathbf{X}_i))$ for every $P \in \mathbb{C}\langle\mathbf{X}_i\rangle = \mathbb{C}\langle X_{i1},\dots,X_{ir(i)}\rangle$, the non-commutative polynomials in $X_{i1},\dots,X_{ir(i)}$, where $P(\Xi_i(N)) := P(\xi_{i1}(N),\dots,\xi_{ir(i)}(N))$ and $P(\mathbf{X}_i) := P(X_{i1},\dots,X_{ir(i)})$ as before.  

\begin{proposition}\label{P-2-3} {\rm(Agreement with the previous definition)} With the hyperfiniteness of each $W^*(\mathbf{X}_i)$, $1 \leq i \leq n$, and the notations above we have  
\begin{align}\label{Eq1}
&\chi_{\mathrm{orb},R}(\mathbf{X}_1,\dots,\mathbf{X}_n)  \notag\\
&= 
\lim_{\substack{m\to\infty \\ \delta\searrow0}} \limsup_{N\to\infty} 
\frac{1}{N^2} \log\Big(\gamma_{\mathrm{U}(N)}^{\otimes n}\big(\Gamma_\mathrm{orb}(\mathbf{X}_1,\dots,\mathbf{X}_n:(\Xi_i(N))_{i=1}^n\,;\,N,m,\delta)\big)\Big) 
\end{align} 
as long as $R \geq \max\{\Vert X_{ij}\Vert_\infty\,|\,1 \leq j \leq r(i),\, 1 \leq i \leq n\}$. The right-hand side is nothing less than the previous definition of $\chi_\mathrm{orb}(\mathbf{X}_1,\dots,\mathbf{X}_n)$, and hence the new definition agrees with the previous one when every $W^*(\mathbf{X}_i)$ is hyperfinite. 
\end{proposition}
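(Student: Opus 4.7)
We prove the two inequalities in \eqref{Eq1} separately, and then observe that its right-hand side is precisely the formula used in \cite{HiaiMiyamotoUeda:IJM09} to define $\chi_\mathrm{orb}$. The easy direction ``$\geq$'' follows in one step from Lemma \ref{L-2-2}: plugging the Dirac mass $\mu = \delta_{(\Xi_i(N))_{i=1}^n}$ into the supremum in Definition \ref{D-2-1} reduces the inner quantity to $\log\bigl(\gamma_{\mathrm{U}(N)}^{\otimes n}(\Gamma_\mathrm{orb}(\mathbf{X}_1,\dots,\mathbf{X}_n:(\Xi_i(N))_{i=1}^n\,;\,N,m,\delta))\bigr)$, after which the successive $\limsup_N$ and $\lim_{m,\delta}$ deliver the claim.

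For the reverse inequality ``$\leq$'', Lemma \ref{L-2-2} combined with $\mu$ being a probability measure gives
\[
\chi_{\mathrm{orb},R}(\mathbf{X}_1,\dots,\mathbf{X}_n\,;\,N,m,\delta) \leq \log\Bigl(\sup_{(\mathbf{A}_i)\in\prod_i\Gamma_R(\mathbf{X}_i\,;\,N,m,\delta)} \gamma_{\mathrm{U}(N)}^{\otimes n}\bigl(\Gamma_\mathrm{orb}(\mathbf{X}_1,\dots,\mathbf{X}_n:(\mathbf{A}_i)_{i=1}^n\,;\,N,m,\delta)\bigr)\Bigr),
\]
so it suffices to bound each orbital measure on the right by the one at $(\Xi_i(N))_{i=1}^n$, up to a harmless relaxation of $m$ and $\delta$. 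Here hyperfiniteness enters exactly once, through Jung's theorem (one of the three non-trivial inputs cited in the introduction): for every $\epsilon>0$, once $m$ is large and $\delta$ small, any $\mathbf{A}_i\in\Gamma_R(\mathbf{X}_i\,;\,N,m,\delta)$ admits, for all sufficiently large $N$, a unitary $V_i\in\mathrm{U}(N)$ with $\Vert V_i\mathbf{A}_i V_i^* - \Xi_i(N)\Vert_{2,\mathrm{tr}_N} < \epsilon$. Writing $\mathbf{A}_i' := V_i\mathbf{A}_i V_i^*$, the right-translation invariance of $\gamma_{\mathrm{U}(N)}^{\otimes n}$ yields the exact equality
\[
\gamma_{\mathrm{U}(N)}^{\otimes n}\bigl(\Gamma_\mathrm{orb}(\mathbf{X}_1,\dots,\mathbf{X}_n:(\mathbf{A}_i)_{i=1}^n\,;\,N,m,\delta)\bigr) = \gamma_{\mathrm{U}(N)}^{\otimes n}\bigl(\Gamma_\mathrm{orb}(\mathbf{X}_1,\dots,\mathbf{X}_n:(\mathbf{A}_i')_{i=1}^n\,;\,N,m,\delta)\bigr),
\]
while a routine telescoping estimate --- using the uniform operator-norm ceiling $R$ together with the $2$-isometry of unitary conjugation --- propagates the $2$-norm closeness between $\mathbf{A}_i'$ and $\Xi_i(N)$ to the closeness of every mixed moment of degree $\leq m$ of $(U_i\mathbf{A}_i' U_i^*)_{i=1}^n$ and $(U_i\Xi_i(N) U_i^*)_{i=1}^n$ by some $C_{R,m}\epsilon$, uniformly in $(U_i)_{i=1}^n\in\mathrm{U}(N)^n$. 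This forces the set inclusion
\[
\Gamma_\mathrm{orb}(\mathbf{X}_1,\dots,\mathbf{X}_n:(\mathbf{A}_i')_{i=1}^n\,;\,N,m,\delta) \subseteq \Gamma_\mathrm{orb}(\mathbf{X}_1,\dots,\mathbf{X}_n:(\Xi_i(N))_{i=1}^n\,;\,N,m,\delta + C_{R,m}\epsilon),
\]
and chaining $\limsup_N$, $\inf_{m,\delta}$, and $\epsilon\searrow 0$ produces the $\leq$ direction.

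The right-hand side of \eqref{Eq1} is literally the formula defining $\chi_\mathrm{orb}$ in \cite{HiaiMiyamotoUeda:IJM09}, so the agreement between the two definitions is then automatic (and the independence of that formula from the particular choice of $(\Xi_i(N))_{i=1}^n$, itself a non-trivial feature of the old definition, drops out as a by-product of the argument above). The main obstacle is the ``$\leq$'' direction, concentrated entirely in the appeal to Jung's theorem on unitary approximation of microstates in the hyperfinite case: this is precisely the step that pins the old definition to the hyperfinite regime, and that the new definition is designed to sidestep.
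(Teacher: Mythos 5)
Your proposal is correct and follows essentially the same route as the paper's own proof: the Dirac mass $\delta_{(\Xi_i(N))_{i=1}^n}$ for ``$\geq$'', and Jung's unitary-approximation theorem plus right-invariance of $\gamma_{\mathrm{U}(N)}^{\otimes n}$ for ``$\leq$''. The only differences are cosmetic --- the paper phrases the key step as a single inclusion $\Gamma_\mathrm{orb}(\ldots:(\mathbf{A}_i)_{i=1}^n\,;\,N,m',\delta') \subseteq \Gamma_\mathrm{orb}(\ldots:(\Xi_i(N))_{i=1}^n\,;\,N,m,\delta)\cdot(V_i)_{i=1}^n$ (with the accuracy $\delta/(2m(R+1)^{m-1})$ built in, so the target $(m,\delta)$ is fixed first and $(m',\delta')$ derived), whereas you split this into a conjugation-plus-measure-equality step and a telescoping step with a floating $\epsilon$; to make your version airtight you should pin $\epsilon$ to the target $(m,\delta)$ (e.g.\ $\epsilon \leq \delta/(2C_{R,m})$) before invoking Jung, exactly as the paper does.
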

\begin{proof} Let $m \in \mathbb{N}$ and $\delta>0$ be arbitrarily fixed. For all sufficiently large $N \in \mathbb{N}$ one has $\Xi_i(N) \in \Gamma_R(\mathbf{X}_i\,;\,N,m,\delta)$. With $\mu = \delta_{(\Xi_i(N))_{i=1}^n}$, a unit point mass, one has, by Lemma \ref{L-2-2},  
\begin{align*} 
\chi_{\mathrm{orb},R}(\mathbf{X}_1,\dots,\mathbf{X}_n\,;\,N,m,\delta) 
&\geq 
\chi_{\mathrm{orb},R}(\mathbf{X}_1,\dots,\mathbf{X}_n\,;\,N,m,\delta\,;\,\mu) \\
&= 
\log\Big(\gamma_{\mathrm{U}(N)}^{\otimes n}\big(\Gamma_\mathrm{orb}(\mathbf{X}_1,\dots,\mathbf{X}_n:(\Xi_i(N))_{i=1}^n\,;\,N,m,\delta)\big)\Big)
\end{align*}
for all sufficiently large $N \in \mathbb{N}$. It follows that $\chi_{\mathrm{orb},R}(\mathbf{X}_1,\dots,\mathbf{X}_n)$ is not smaller than the right-hand side of \eqref{Eq1}. Hence we need to prove that the reverse inequality; namely it suffices to prove that for each $m \in \mathbb{N}$ and $\delta>0$ there are $m' \in \mathbb{N}$, $\delta' > 0$ and $N_0 \in \mathbb{N}$ such that  
\begin{align}\label{Eq2} 
&\gamma_{\mathrm{U}(N)}^{\otimes n}\big(\Gamma_\mathrm{orb}(\mathbf{X}_1,\dots,\mathbf{X}_n:(\mathbf{A}_i)_{i=1}^n\,;\,N,m',\delta')\big) \notag\\
&\quad\quad 
\leq \gamma_{\mathrm{U}(N)}^{\otimes n}\big(\Gamma_\mathrm{orb}(\mathbf{X}_1,\dots,\mathbf{X}_n:(\Xi_i(N))_{i=1}^p\,;\,N,m,\delta)\big)
\end{align}
for every $N \geq N_0$ and $(\mathbf{A}_i)_{i=1}^n \in \prod_{i=1}^n \Gamma_R(\mathbf{X}_i\,;\,N,m',\delta')$. In fact, this and Lemma \ref{L-2-2} imply that 
\begin{align*} 
\chi_{\mathrm{orb},R}(\mathbf{X}_1,\dots,\mathbf{X}_n)
&\leq
\chi_{\mathrm{orb},R}(\mathbf{X}_1,\dots,\mathbf{X}_n\,;\,m',\delta') \\
&\leq \limsup_{N\to\infty} 
\frac{1}{N^2} \log\Big(\gamma_{\mathrm{U}(N)}^{\otimes n}\big(\Gamma_\mathrm{orb}(\mathbf{X}_1,\dots,\mathbf{X}_n:(\Xi_i(N))_{i=1}^n\,;\,N,m,\delta)\big)\Big), 
\end{align*}
which implies the desired inequality as taking the limit of the rightmost as $m\to\infty$ and $\delta \searrow 0$. 

Applying Jung's theorem \cite{Jung:MathAnn07} (see \cite[Lemma 2.1]{HiaiMiyamotoUeda:IJM09}) to all $\mathbf{X}_i$'s one can find $m' \in \mathbb{N}$ and $\delta'' > 0$ in such a way that for each $N \in \mathbb{N}$, each $1 \leq i \leq n$ and each $\mathbf{B}_1 = (B_{11},\dots,B_{1r(i)}), \mathbf{B}_2 = (B_{21},\dots,B_{2r(i)}) \in \Gamma_\infty(\mathbf{X}_i\,;\,N,m',\delta'')$ there is a single unitary $U = U_i(\mathbf{B}_1,\mathbf{B}_2) \in \mathrm{U}(N)$ such that $\Vert B_{1j} - UB_{2j} U^*\Vert_{1,\mathrm{tr}_N} < \delta/(2m(R+1)^{m-1})$ for every $1 \leq j \leq r(i)$. Let $N_0 \in \mathbb{N}$ be chosen so that every $\Xi_i(N)$ falls in $\Gamma_R(\mathbf{X}_i\,;\,N,m',\delta'')$ as long as $N \geq N_0$. Let $\delta'>0$ be such that $\delta' < \min\{\delta'', \delta/2\}$. Then for every $N \geq N_0$ and every $(\mathbf{A}_i)_{i=1}^n \in \prod_{i=1}^n \Gamma_R(\mathbf{X}_i\,;\,N,m',\delta')$ one can prove (see the proof of \cite[Lemma 4.2]{HiaiMiyamotoUeda:IJM09}) that 
\begin{align*}
&\Gamma_\mathrm{orb}(\mathbf{X}_1,\dots,\mathbf{X}_n:(\mathbf{A}_i)_{i=1}^n\,;\,N,m',\delta') \\
&\quad \subseteq 
\Gamma_\mathrm{orb}(\mathbf{X}_1,\dots,\mathbf{X}_n:(\Xi_i(N))_{i=1}^n\,;\,N,m,\delta)\cdot(V_1,\dots,V_n)
\end{align*}
holds with $V_i := U_i(\mathbf{A}_i,\Xi_i(N)) \in \mathrm{U}(N)$, $1 \leq i \leq n$. Hence \eqref{Eq2} follows due to the right-invariance of the Haar probability measure $\gamma_{\mathrm{U}(N)}$.  
\end{proof} 

The next proposition re-defines $\chi_{\mathrm{orb},R}(\mathbf{X}_1,\dots,\mathbf{X}_n)$ and shows a relationship between our new $\chi_\mathrm{orb}$ and Biane--Dabrowski's $\tilde{\chi}_\mathrm{orb}$ as promised before. 
    
\begin{proposition}\label{P-2-4} {\rm(Yet another definition of $\chi_\mathrm{orb}$)} With 
\begin{align}\label{Eq3}
&\bar{\chi}_{\mathrm{orb},R}(\mathbf{X}_1,\dots,\mathbf{X}_n\,;\,N,m,\delta) \notag\\
&\quad\quad:= 
\sup_{\mathbf{A}_i \in (M_N^\mathrm{sa})_R^{r(i)}}\log\Big(\gamma_{\mathrm{U}(N)}^{\otimes n}\big(\Gamma_\mathrm{orb}(\mathbf{X}_1,\dots,\mathbf{X}_n:(\mathbf{A}_i)_{i=1}^n\,;\,N,m,\delta)\big)\Big) \\
&\quad\quad\,= 
\sup_{\mathbf{A}_i \in \Gamma_R(\mathbf{X}_i\,;\,N,m,\delta)}\log\Big(\gamma_{\mathrm{U}(N)}^{\otimes n}\big(\Gamma_\mathrm{orb}(\mathbf{X}_1,\dots,\mathbf{X}_n:(\mathbf{A}_i)_{i=1}^n\,;\,N,m,\delta)\big)\Big) \notag
\end{align}
{\rm(}defined to be $-\infty$ if some $\Gamma_R(\mathbf{X}_i\,;\,N,m,\delta) = \emptyset${\rm)} we have  
\begin{equation}\label{Eq4}
\chi_{\mathrm{orb},R}(\mathbf{X}_1,\dots,\mathbf{X}_n) 
= 
\lim_{\substack{m\to\infty \\ \delta\searrow0}} \limsup_{N\to\infty} 
\frac{1}{N^2}\bar{\chi}_{\mathrm{orb},R}(\mathbf{X}_1,\dots,\mathbf{X}_n\,;\,N,m,\delta). 
\end{equation} 
In particular, our $\chi_\mathrm{orb}$ is not greater than Biane--Dabrowski's $\tilde{\chi}_\mathrm{orb}$ in general, and both coincide at least when $W^*(\mathbf{X}_1\sqcup\cdots\sqcup\mathbf{X}_n)$ is a factor. 
\end{proposition}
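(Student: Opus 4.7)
The plan is to reduce \eqref{Eq4} to the pointwise identity
$$
\chi_{\mathrm{orb},R}(\mathbf{X}_1,\dots,\mathbf{X}_n\,;\,N,m,\delta) \;=\; \bar{\chi}_{\mathrm{orb},R}(\mathbf{X}_1,\dots,\mathbf{X}_n\,;\,N,m,\delta)
$$
for each fixed triple $(N,m,\delta)$, after which the formula \eqref{Eq4} drops out by applying $\limsup_{N\to\infty}\tfrac{1}{N^2}(\cdots)$ and then $\lim_{m\to\infty,\,\delta\searrow0}$ exactly as prescribed in Definition \ref{D-2-1}.

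The pointwise identity rests entirely on Lemma \ref{L-2-2}. Introduce the $[0,1]$-valued Borel function
$$
f_{N,m,\delta}\bigl((\mathbf{A}_i)_{i=1}^n\bigr) := \gamma_{\mathrm{U}(N)}^{\otimes n}\bigl(\Gamma_\mathrm{orb}(\mathbf{X}_1,\dots,\mathbf{X}_n:(\mathbf{A}_i)_{i=1}^n\,;\,N,m,\delta)\bigr).
$$
Lemma \ref{L-2-2} then rewrites $\chi_{\mathrm{orb},R}(\cdots;N,m,\delta;\mu) = \log\!\int f_{N,m,\delta}\,d\mu$, with the understanding that only the restriction of $\mu$ to the compact box $\prod_{i=1}^n((M_N^\mathrm{sa})_R)^{r(i)}$ contributes, since the sections of $\Phi_N^{-1}(\Gamma_R(\cdots))$ are empty whenever some $\|\mathbf{A}_i\|_\infty>R$. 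The elementary identity $\sup_{\mu\in\mathcal{P}}\int f\,d\mu = \sup_{\mathbf{A}} f(\mathbf{A})$, attained exactly by Dirac masses at each point, then yields
$$
\sup_{\mu} \chi_{\mathrm{orb},R}(\cdots;N,m,\delta;\mu)
\;=\;
\sup_{(\mathbf{A}_i)\in\prod_{i=1}^n((M_N^\mathrm{sa})_R)^{r(i)}} \log f_{N,m,\delta}\bigl((\mathbf{A}_i)_{i=1}^n\bigr),
$$
which is the first equality in \eqref{Eq3}. The second equality in \eqref{Eq3} then comes for free from the remark preceding Lemma \ref{L-2-2}: within the box $\prod ((M_N^\mathrm{sa})_R)^{r(i)}$ the orbital microstate space is empty (so $\log f_{N,m,\delta} = -\infty$) as soon as some $\mathbf{A}_i$ fails to belong to $\Gamma_R(\mathbf{X}_i\,;\,N,m,\delta)$, hence the supremum is automatically realized on $\prod_{i=1}^n\Gamma_R(\mathbf{X}_i\,;\,N,m,\delta)$.

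As for the comparison with Biane--Dabrowski's $\tilde{\chi}_\mathrm{orb}$: the reformulation \eqref{Eq4} puts our quantity in a form directly parallel to the expression in \cite[Theorem 7.3 (7)]{BianeDabrowski:AdvMath13}, whence the inequality $\chi_\mathrm{orb}\leq\tilde{\chi}_\mathrm{orb}$ should fall out by a term-by-term comparison of the pointwise suprema and the asymptotic limits, the two formulations differing only in the precise class of matricial microstates over which the inner supremum is taken. The coincidence when $W^*(\mathbf{X}_1\sqcup\cdots\sqcup\mathbf{X}_n)$ is a factor is expected to follow by an approximation argument in the spirit of the proof of Proposition \ref{P-2-3}: individual matricial microstates for the $\mathbf{X}_i$'s and joint matricial microstates for $\mathbf{X}_1\sqcup\cdots\sqcup\mathbf{X}_n$ should be bridgeable by a single coordinate-wise unitary conjugation, with the existence of such a conjugation in the factorial regime supplied by Jung's theorem \cite{Jung:MathAnn07}. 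The main obstacle is this last step, where careful bookkeeping of Biane--Dabrowski's asymptotic limits and a judicious placement of Jung's theorem are needed to close the factor-case gap; by contrast, the measure-theoretic core of \eqref{Eq4} is entirely routine once Lemma \ref{L-2-2} is in hand.
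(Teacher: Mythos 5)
Your treatment of \eqref{Eq4} is correct and in fact a cleaner route than the paper's. You identify the exact pointwise identity $\chi_{\mathrm{orb},R}(\mathbf{X}_1,\dots,\mathbf{X}_n\,;\,N,m,\delta) = \bar{\chi}_{\mathrm{orb},R}(\mathbf{X}_1,\dots,\mathbf{X}_n\,;\,N,m,\delta)$ for each fixed $(N,m,\delta)$: by Lemma \ref{L-2-2} the quantity $\chi_{\mathrm{orb},R}(\cdots;\mu)$ equals $\log\int f_{N,m,\delta}\,d\mu$ (with $f$ extended by zero off the box), and since Dirac masses saturate $\sup_\mu\int f\,d\mu = \sup f$, the two suprema agree exactly. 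The paper instead proves only the inequality $\chi_{\mathrm{orb},R}(\cdots;\mu)\leq\bar{\chi}_{\mathrm{orb},R}$ directly and recovers the reverse after normalization by $1/N^2$, using a subsequence $(N_k)$ and point masses $\mu_{N_k}=\delta_{(\mathbf{A}_i^{(k)})}$ chosen to come within an additive error $1$ of $\bar{\chi}_{\mathrm{orb},R}(\cdots;N_k,m,\delta)$. That error term is innocuous after dividing by $N_k^2$, but your observation that the supremum is already attained exactly by Dirac masses eliminates the subsequence bookkeeping entirely and yields equality at the finite-$N$ level, which is strictly stronger. The second equality in \eqref{Eq3} is likewise handled correctly by the empty-section observation preceding Lemma \ref{L-2-2}.

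On the last sentence of the proposition the paper simply defers to the proof of \cite[Theorem 7.3 (7)]{BianeDabrowski:AdvMath13}, and you cannot be faulted for being vague there. However, your speculation that the factor-case coincidence is ``supplied by Jung's theorem \cite{Jung:MathAnn07}'' is not the right lead: that theorem concerns hyperfinite $W^*$-algebras and is the engine behind Proposition \ref{P-2-3}, not the factorial-case comparison of the two Biane--Dabrowski quantities. The factor hypothesis enters Biane--Dabrowski's argument differently, and invoking Jung here would be circular with the hyperfiniteness the new definition was designed to drop. This does not affect the correctness of the main measure-theoretic identity, which is complete.
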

\begin{proof} By definition the second equality in \eqref{Eq3} is obvious. 

By Lemma \ref{L-2-2} we have 
$$
\chi_{\mathrm{orb},R}(\mathbf{X}_1,\dots,\mathbf{X}_n\,;\,N,m,\delta\,;\,\mu) 
\leq
\bar{\chi}_{\mathrm{orb},R}(\mathbf{X}_1,\dots,\mathbf{X}_n\,;\,N,m,\delta)
$$
since $\mu$ is a probability measure and $t \in [0,+\infty) \mapsto \log t \in [-\infty,+\infty)$ with $\log 0 := -\infty$ is monotone. This immediately implies inequality `$\leq$' in \eqref{Eq4}. Hence it suffices to prove inequality `$\geq$' in \eqref{Eq4} when the right-hand side is finite, i.e., $\neq -\infty$. Since the limit as $m\to\infty$ and $\delta\searrow0$ is actually the infimum over $m \in \mathbb{N}$ and $\delta>0$, we may and do assume that for every $m \in \mathbb{N}$ and $\delta>0$ there is a subsequence $N_1 < N_2 < \cdots$ so that $\bar{\chi}_{\mathrm{orb},R}(\mathbf{X}_1,\dots,\mathbf{X}_n\,;\,N_k,m,\delta) > -\infty$ for all $k \in \mathbb{N}$ and  
$$
\limsup_{N\to\infty}\frac{1}{N^2}\bar{\chi}_{\mathrm{orb},R}(\mathbf{X}_1,\dots,\mathbf{X}_n\,;\,N,m,\delta) = \lim_{k\to\infty}\frac{1}{N_k^2}\bar{\chi}_{\mathrm{orb},R}(\mathbf{X}_1,\dots,\mathbf{X}_n\,;\,N_k,m,\delta).
$$ 
For each $k \in \mathbb{N}$ one can choose $\mathbf{A}_i^{(k)} \in ((M_{N_k}^\mathrm{sa})_R)^{r(i)}$, $1 \leq i \leq n$, in such a way that $\mu_{N_k} := \delta_{(\mathbf{A}_i^{(k)})_{i=1}^n} \in \mathcal{P}(\prod_{i=1}^n((M_{N_k}^\mathrm{sa})_R)^{r(i)})$, a unit point mass, satisfies that 
\begin{align*} 
\bar{\chi}_{\mathrm{orb},R}(\mathbf{X}_1,\dots,\mathbf{X}_n\,;\,N_k,m,\delta) - 1
&\leq 
\chi_{\mathrm{orb},R}(\mathbf{X}_1,\dots,\mathbf{X}_n\,;\,N_k,m,\delta\,;\,\mu_{N_k}) \\
&\leq 
\chi_{\mathrm{orb},R}(\mathbf{X}_1,\dots,\mathbf{X}_n\,;\,N_k,m,\delta), 
\end{align*}
and hence
\begin{align*}
\limsup_{N\to\infty} 
\frac{1}{N^2}\bar{\chi}_{\mathrm{orb},R}(\mathbf{X}_1,\dots,\mathbf{X}_n\,;\,N,m,\delta)
&= 
\lim_{k\to\infty}\frac{1}{N_k^2} \bar{\chi}_{\mathrm{orb},R}(\mathbf{X}_1,\dots,\mathbf{X}_n\,;\,N_k,m,\delta) \\
&\leq
\limsup_{k\to\infty} 
\frac{1}{N_k^2} 
\Big(\chi_{\mathrm{orb},R}(\mathbf{X}_1,\dots,\mathbf{X}_n\,;\,N_k ,m,\delta) + 1\Big) \\
&\leq 
\chi_{\mathrm{orb},R}(\mathbf{X}_1,\dots,\mathbf{X}_n\,;\,m,\delta). 
\end{align*} 
Therefore, the desired inequality follows. 

The rest is an immediate consequence from the proof of \cite[Theorem 7.3 (7)]{BianeDabrowski:AdvMath13}.  
\end{proof} 

The next lemma shows that the procedure of cut-off $R>0$ that appears in the definition of $\chi_\mathrm{orb}$ is inessential. The proof below technically originates in \cite[Proposition 2.4]{Voiculescu:InventMath94} and \cite[Lemma 2.3]{BelinschiBercovici:PacificJMath03}. 

\begin{lemma}\label{L-2-5} We have $\chi_\mathrm{orb}(\mathbf{X}_1,\dots,\mathbf{X}_n) = \chi_{\mathrm{orb},R}(\mathbf{X}_1,\dots,\mathbf{X}_n)$ if $R \gneqq \rho := \max\{\Vert X_{ij}\Vert_\infty\,|\,1 \leq j \leq r(i),\, 1 \leq i \leq n\}$ possibly with $R=\infty$.  
\end{lemma}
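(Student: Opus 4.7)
The inequality $\chi_{\mathrm{orb},R}(\mathbf{X}_1,\dots,\mathbf{X}_n) \leq \chi_\mathrm{orb}(\mathbf{X}_1,\dots,\mathbf{X}_n)$ is immediate for finite $R$ from the definition of $\chi_\mathrm{orb}$ as a supremum, and the general monotonicity $R \leq R' \Rightarrow \chi_{\mathrm{orb},R} \leq \chi_{\mathrm{orb},R'}$ is obvious since $\Gamma_R(\mathbf{X}_1\sqcup\cdots\sqcup\mathbf{X}_n\,;\,N,m,\delta) \subseteq \Gamma_{R'}(\mathbf{X}_1\sqcup\cdots\sqcup\mathbf{X}_n\,;\,N,m,\delta)$. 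My plan is therefore to establish, for every pair $R' > R > \rho$ (allowing $R' = \infty$), the reverse comparison $\chi_{\mathrm{orb},R'}(\mathbf{X}_1,\dots,\mathbf{X}_n) \leq \chi_{\mathrm{orb},R}(\mathbf{X}_1,\dots,\mathbf{X}_n)$; combining with monotonicity, the two extremal quantities agree with the supremum defining $\chi_\mathrm{orb}$. By Proposition~\ref{P-2-4} it suffices to exhibit, for each $m \in \mathbb{N}$ and $\delta > 0$, parameters $m' \in \mathbb{N}$ and $\delta' > 0$ such that $\bar{\chi}_{\mathrm{orb},R'}(\mathbf{X}_1,\dots,\mathbf{X}_n\,;\,N,m',\delta') \leq \bar{\chi}_{\mathrm{orb},R}(\mathbf{X}_1,\dots,\mathbf{X}_n\,;\,N,m,\delta)$ for all sufficiently large $N$.

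The construction is a spectral truncation. Fix $\rho_0 \in (\rho, R)$ and pick a continuous $f : \mathbb{R} \to \mathbb{R}$ with $f(t) = t$ on $[-\rho_0,\rho_0]$ and $|f(t)| \leq R$ for all $t$. For any microstate $\mathbf{A}_i = (A_{ij})_j \in \Gamma_{R'}(\mathbf{X}_i\,;\,N,m',\delta')$, form $f(\mathbf{A}_i) := (f(A_{ij}))_j$ via functional calculus, so that $f(\mathbf{A}_i) \in ((M_N^\mathrm{sa})_R)^{r(i)}$ automatically. Since $f(t)-t$ vanishes on $[-\rho_0,\rho_0]$ and can be bounded by a constant multiple of $(t^2/\rho_0^2)^k$ outside it, the moment inequality $\tau((X_{ij}^2/\rho_0^2)^k) \leq (\rho/\rho_0)^{2k}$ together with the microstate condition $\bigl|\mathrm{tr}_N((A_{ij}^2/\rho_0^2)^k) - \tau((X_{ij}^2/\rho_0^2)^k)\bigr| < \delta'$ (valid once $2k \leq m'$) yields
\[
\|A_{ij}-f(A_{ij})\|_{2,\mathrm{tr}_N}^2 \leq C_{R,\rho_0}\bigl((\rho/\rho_0)^{2k} + \delta'\bigr),
\]
a quantity that can be forced arbitrarily small by taking $k$ large and $\delta'$ small, independently of $N$. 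This is precisely the argument pattern of \cite[Proposition 2.4]{Voiculescu:InventMath94} and \cite[Lemma 2.3]{BelinschiBercovici:PacificJMath03} alluded to in the lemma.

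With this 2-norm closeness in hand, a telescope combined with non-commutative H\"older estimates shows: for every tuple $(U_i) \in \mathrm{U}(N)^n$, the joint moments of $(U_i f(\mathbf{A}_i) U_i^*)_i$ and $(U_i \mathbf{A}_i U_i^*)_i$ up to degree $m$ differ by less than $\delta/2$, because unitary conjugation preserves the $2$-norm and the H\"older step uses $\|f(A_{ij})\|_\infty \leq R$ on $f$-factors while bounding the remaining $A$-factors in $\|\cdot\|_{p,\mathrm{tr}_N}$ (with $p \asymp 2m$) against $\|X_{ij}\|_{p,\tau} \leq \rho$ through the microstate property. Hence, choosing $m' \gg m$ and $\delta' \ll \delta$ appropriately, one obtains both (i) $f(\mathbf{A}_i) \in \Gamma_R(\mathbf{X}_i\,;\,N,m,\delta)$, and (ii) $\Gamma_\mathrm{orb}(\mathbf{X}_1,\dots,\mathbf{X}_n:(\mathbf{A}_i)_{i=1}^n\,;\,N,m',\delta') \subseteq \Gamma_\mathrm{orb}(\mathbf{X}_1,\dots,\mathbf{X}_n:(f(\mathbf{A}_i))_{i=1}^n\,;\,N,m,\delta)$. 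Taking $\gamma_{\mathrm{U}(N)}^{\otimes n}$-measure of (ii), then the logarithm, then the supremum over $\mathbf{A}_i \in \Gamma_{R'}(\mathbf{X}_i\,;\,N,m',\delta')$, delivers the required inequality in view of the second form of $\bar{\chi}_{\mathrm{orb},R}$ in \eqref{Eq3}.

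The main obstacle is not any single estimate but the parameter bookkeeping: the integers $k$ and $m'$, the thresholds $\delta'$ and $\rho_0$, and the constants from the H\"older step must all be chosen as functions of $(m,\delta,R,\rho)$ alone, so that the estimates hold uniformly in $N$ before passing to $\limsup_{N\to\infty}$. The case $R' = \infty$ is the genuinely delicate one, since $\|A_{ij}\|_\infty$ is then uncontrolled; this is exactly what forces the use of $L^p$-norm H\"older bounds rather than operator-norm estimates, and what makes the preliminary spectral-tail control outside $[-\rho_0,\rho_0]$ indispensable.
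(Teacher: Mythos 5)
Your proposal is correct and follows essentially the same path as the paper's proof: both establish $\chi_{\mathrm{orb},R'} \leq \chi_{\mathrm{orb},R}$ for $R' > R > \rho$ (including $R' = \infty$) by applying a bounded truncation $f$ to each microstate, bounding the $L^p(\mathrm{tr}_N)$-error via tail moment estimates that exploit $\|X_{ij}\|_\infty \leq \rho < R$, and propagating this through the mixed moments by a telescoping generalized H\"older argument before passing to $\bar{\chi}_{\mathrm{orb},R}$ via Proposition~\ref{P-2-4}. The only cosmetic differences are that the paper truncates exactly at $R$ and controls the error in $\|\cdot\|_{m,\mathrm{tr}_N}$, whereas you truncate at an intermediate $\rho_0 \in (\rho,R)$ and work with the $2$-norm; both choices lead to the same conclusion after the indicated bookkeeping.
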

\begin{proof} Define the function $f : \mathbb{R} \to [-1,1]$ by $f(t) = t$ for $t \in [-1,1]$, $f(t) = -1$ for $t < -1$, and $f(t) = 1$ for $t > 1$. Choose an arbitrary $R \gneqq \rho$, and set $f_R(t) := Rf(t/R)$ for $t \in \mathbb{R}$. 

Let $m \in \mathbb{N}$ and $\delta>0$ be arbitrarily fixed. Set $L := \max\{(\rho^{2m}+1)^{1/2m},R\} >1$. Choose $0<\delta'<\min\{1,\delta/2\}$. One can choose an even $m' \in \mathbb{N}$ in such a way that $m' \geq m$ and $R\big((\rho/R)^{m'} + \delta'/R^{m'}\big)^{1/m} < \delta/(2mL^{m-1})$. Let $\mathbf{A}_i = (A_{ij})_{j=1}^{r(i)} \in \Gamma_\infty(\mathbf{X}_i\,;\,N,m',\delta')$ be given. Then one has $\mathrm{tr}_N(A_{ij}^{m''}) \leq \rho^{m''} + \delta'$ if $m''$ is even and not greater than $m'$. Since $\mathrm{tr}_N(I) = 1$, one has $\Vert-\Vert_{p,\mathrm{tr}_N} \leq \Vert-\Vert_{q,\mathrm{tr}_N}$ if $1 \leq p \leq q$, and hence $\Vert A_{ij}\Vert_{p,\mathrm{tr}_N} \leq \Vert A_{ij}\Vert_{2m,\mathrm{tr}_N} \leq L$ as long as $p \leq 2m$. If $\lambda_1,\dots,\lambda_N$ are the eigenvalues of $A_{ij}$ with counting multiplicities, one has 
\begin{align*}
\Vert A_{ij} - f_R(A_{ij})\Vert_{m,\mathrm{tr}_N} 
&\leq R\Big(\frac{1}{N}\sum_{|\lambda_j| > R} \Big|\frac{\lambda_j}{R}\Big|^m\Big)^{1/m} 
\leq R\Big(\frac{1}{N}\sum_{|\lambda_j| > R} \Big(\frac{\lambda_j}{R}\Big)^{m'}\Big)^{1/m} \\
&\leq R\big(\mathrm{tr}_N(A_{ij}^{m'})/R^{m'}\big)^{1/m} 
\leq R\big((\rho/R)^{m'} + \delta'/R^{m'}\big)^{1/m} < \frac{\delta}{2mL^{m-1}}.
\end{align*} 
ThereforeCif $(U_i)_{i=1}^n \in \Gamma_\mathrm{orb}(\mathbf{X}_1,\dots,\mathbf{X}_n\,:(\mathbf{A}_i)_{i=1}^n\,;\,N,m',\delta')$, then one has
\begin{align*} 
&|\mathrm{tr}_N(U_{i_1}f_R(A_{i_1 j_1})U_{i_1}^*\cdots U_{i_l}f_R(A_{i_l j_l})U_{i_l}^*)-\tau(X_{i_l j_1}\cdots X_{i_l j_l})| \\
&\leq |\mathrm{tr}_N(U_{i_1}f_R(A_{i_1 j_1})U_{i_1}^*\cdots U_{i_l}f_R(A_{i_l j_l})U_{i_l}^*) - \mathrm{tr}_N(U_{i_1}A_{i_1 j_1}U_{i_1}^*\cdots U_{i_l}^*A_{i_1 j_l}U_{i_l}^*)| \\
&\quad\quad\quad\quad+ |\mathrm{tr}_N(U_{i_1} A_{i_1 j_1}U_{i_1}^* \cdots U_{i_l} A_{i_l j_l} U_{i_l}^*) - \tau(X_{i_1 j_1}\cdots X_{i_l j_l})| \\
&< \sum_{k=1}^l L^{l-1}\Vert U_{i_k} (f_R(A_{i_k j_k})-A_{i_k j_k})U_{i_k}^*\Vert_{l,\mathrm{tr}_N} + \delta' 
\leq mL^{m-1}\frac{\delta}{2mL^{m-1}} + \frac{\delta}{2} = \delta 
\end{align*}
for every $1 \leq j_k \leq r(i_k)$, $1 \leq i_k \leq n$, $1 \leq k \leq l$ and $l \leq m$, where the second inequality is shown by the so-called generalized H\"{o}lder inequality and the third follows from the previous estimate together with the unitary invariance of $p$-norms. Hence $(U_i)_{i=1}^n \in \Gamma_\mathrm{orb}(\mathbf{X}_1,\dots,\mathbf{X}_n\,:\,(f_R(\mathbf{A}_i))_{i=1}^n\,;\,N,m,\delta)$. Consequently, we get
$$
\bar{\chi}_{\mathrm{orb},\infty}(\mathbf{X}_1,\dots,\mathbf{X}_n\,;\,N,m',\delta') \leq 
\bar{\chi}_{\mathrm{orb},R}(\mathbf{X}_1,\dots,\mathbf{X}_n\,;\,N,m,\delta).  
$$
This implies that $\chi_{\mathrm{orb},\infty}(\mathbf{X}_1,\dots,\mathbf{X}_n) \leq \chi_{\mathrm{orb},R}(\mathbf{X}_1,\dots,\mathbf{X}_n)$ by Proposition \ref{P-2-4}. Hence we are done. 
\end{proof}    

Here are almost all the expected properties of $\chi_\mathrm{orb}$. 

\begin{theorem}\label{T-2-6} We have\,{\rm:}
\begin{itemize}
\item[(1)] $\chi_\mathrm{orb}(\mathbf{X}_1,\dots,\mathbf{X}_n) \leq 0$. 
\item[(2)] $\chi_\mathrm{orb}(\mathbf{X}_1,\dots,\mathbf{X}_n) = -\infty$ if $\mathbf{X}_1\sqcup\cdots\sqcup\mathbf{X}_n$ does not have finite dimensional approximates {\rm(}have f.d.a.~in short{\rm)} in the sense of \cite[Definition 3.1]{Voiculescu:IMRN98}. 
\item[(3)] $\chi_\mathrm{orb}(\mathbf{X}) = 0$ if $\mathbf{X}$ has f.d.a.{\rm;} otherwise $-\infty$. 
\item[(4)] $\chi_\mathrm{orb}(\mathbf{X}_1,\dots,\mathbf{X}_{n'},\mathbf{X}_{n'+1},\dots,\mathbf{X}_n) \leq \chi_\mathrm{orb}(\mathbf{X}_1,\dots,\mathbf{X}_{n'}) + \chi_\mathrm{orb}(\mathbf{X}_{n'+1},\dots,\mathbf{X}_n)$. 
\item[(5)] If $\mathbf{X}_1^{(k)}\sqcup\cdots\sqcup\mathbf{X}_n^{(k)} \longrightarrow \mathbf{X}_1\sqcup\cdots\sqcup\mathbf{X}_n$ in distribution as $k\to\infty$, then 
$$
\chi_{\mathrm{orb},R}(\mathbf{X}_1,\dots,\mathbf{X}_n) \geq \limsup_{k\to\infty}
\chi_{\mathrm{orb},R}(\mathbf{X}_1^{(k)},\dots,\mathbf{X}_n^{(k)})
$$
for every $R > 0$ possibly with $R=\infty$. Hence the same assertion holds true for $\chi_\mathrm{orb}$. 
\item[(6)] If $\mathbf{Y}_i = (Y_{ij})_{j=1}^{r'(i)} \subseteq W^*(\mathbf{X}_i)$, $1 \leq i \leq n$, then $\chi_\mathrm{orb}(\mathbf{X}_1,\dots,\mathbf{X}_n) \leq \chi_\mathrm{orb}(\mathbf{Y}_1,\dots,\mathbf{Y}_n)$. In particular, $\chi_\mathrm{orb}(\mathbf{X}_1,\dots,\mathbf{X}_n)$ depends only on the von Neumann subalgebras $W^*(\mathbf{X}_i)$ generated by $\mathbf{X}_i$ in $\mathcal{M}$.
\item[(7)] If $\mathbf{X}_1$ and $\{\mathbf{X}_2,\dots,\mathbf{X}_n\}$ are freely independent in $(\mathcal{M},\tau)$, then $\chi_\mathrm{orb}(\mathbf{X}_1,\mathbf{X}_2,\dots,\mathbf{X}_n) = \chi_\mathrm{orb}(\mathbf{X}_1) + \chi_\mathrm{orb}(\mathbf{X}_2,\dots,\mathbf{X}_n)$, which  turns out to be $\chi_\mathrm{orb}(\mathbf{X}_2,\dots,\mathbf{X}_n)$ when $\mathbf{X}_1$ has f.d.a. 
\item[(8)] $\chi_\mathrm{orb}(\mathbf{X}_1,\dots,\mathbf{X}_n) = 0$ if and only if the $\mathbf{X}_i$'s have f.d.a.~and are freely independent in $(\mathcal{M},\tau)$. 
\end{itemize}   
\end{theorem}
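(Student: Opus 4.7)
The plan is to dispatch items (1)--(5) directly from Definition \ref{D-2-1}, Lemma \ref{L-2-2}, and Proposition \ref{P-2-4}; to handle (6) by polynomial approximation together with a pushforward of measures; and to derive (7), (8) from Voiculescu's asymptotic freeness of Haar-conjugated matrix tuples, reinforced for (8) by Gaussian concentration on $\mathrm{U}(N)^n$. Items (1), (2) are immediate since $\gamma_{\mathrm{U}(N)}^{\otimes n}\otimes \mu$ is a probability measure and since lack of f.d.a.\ empties the relevant matricial microstate space. For (3) with $n=1$, take $\mu=\delta_{\mathbf{A}(N)}$ with $\mathbf{A}(N)$ a microstate of $\mathbf{X}$; every $U\mathbf{A}(N) U^*$ is again a microstate by unitary invariance of moments, so Lemma \ref{L-2-2} gives $\chi_\mathrm{orb}(\mathbf{X})\geq 0$, and (1) closes the case. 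For (4), work with $\bar{\chi}_{\mathrm{orb},R}$: splitting $(U_i)_{i=1}^n$ into two sub-tuples, the orbital microstate condition for $\mathbf{X}_1\sqcup\cdots\sqcup\mathbf{X}_n$ is contained in the intersection of those for $\mathbf{X}_1\sqcup\cdots\sqcup\mathbf{X}_{n'}$ and $\mathbf{X}_{n'+1}\sqcup\cdots\sqcup\mathbf{X}_n$, and since these involve disjoint sets of unitaries the product structure of $\gamma_{\mathrm{U}(N)}^{\otimes n}$ factorizes as $\gamma^{\otimes n}(\Gamma_\mathrm{orb})\leq \gamma^{\otimes n'}(\cdot)\,\gamma^{\otimes (n-n')}(\cdot)$; taking $\log$, supping over $(\mathbf{A}_i)$ separately in each block, and passing to the usual limits yields subadditivity. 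For (5), moment convergence gives $\Gamma_R(\mathbf{X}^{(k)};N,m,\delta/2)\subseteq \Gamma_R(\mathbf{X};N,m,\delta)$ for all large $k$, which passes through $\Phi_N^{-1}$ and hence through $\chi_{\mathrm{orb},R}$.

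For (6), approximate each $Y_{ij}\in W^*(\mathbf{X}_i)$ by a non-commutative polynomial $P_{ij}(\mathbf{X}_i)$ (Kaplansky density) so that the $\mathbf{Y}_i$-moments are $\delta/2$-close to those of $(P_{ij}(\mathbf{X}_i))_j$, and work with $R=\infty$ (invoking Lemma \ref{L-2-5} at the end). For any probability measure $\mu$ on $\prod_{i=1}^n (M_N^\mathrm{sa})^{r(i)}$ define the pushforward $\nu$ under $(\mathbf{A}_i)_i\mapsto (P_i(\mathbf{A}_i))_i$. Since unitary conjugation commutes with polynomial evaluation, $(U_i\mathbf{A}_i U_i^*)$ being a joint $(m',\delta')$-microstate of $(\mathbf{X}_i)$ forces $(U_i P_i(\mathbf{A}_i) U_i^*)$ to be a joint $(m,\delta)$-microstate of $(\mathbf{Y}_i)$ for $(m',\delta')$ fine enough. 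Lemma \ref{L-2-2} then gives $(\gamma^{\otimes n}\otimes \mu)(\Phi_N^{-1}(\Gamma_\infty(\mathbf{X}_1\sqcup\cdots;N,m',\delta')))\leq (\gamma^{\otimes n}\otimes \nu)(\Phi_N^{-1}(\Gamma_\infty(\mathbf{Y}_1\sqcup\cdots;N,m,\delta)))$; taking $\sup_\mu$ and the usual limits produces (6).

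For (7), the inequality $\leq$ is (4). For $\geq$, apply Voiculescu's asymptotic freeness: for independent Haar unitaries and deterministic matrix tuples $(\mathbf{A}_i)$ with convergent marginal distributions, $(U_i \mathbf{A}_i U_i^*)_i$ is almost surely asymptotically free. Choose $\mu=\mu_1\otimes \mu_2$ nearly attaining the respective $\sup$'s for $\chi_\mathrm{orb}(\mathbf{X}_1)$ and $\chi_\mathrm{orb}(\mathbf{X}_2,\dots,\mathbf{X}_n)$, and condition on $(\mathbf{A}_i)$ and on $(U_i)_{i\geq 2}$ making $(U_i\mathbf{A}_i U_i^*)_{i\geq 2}$ a joint microstate of $(\mathbf{X}_i)_{i\geq 2}$. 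Since $\mathbf{X}_1$ is free from $\{\mathbf{X}_i\}_{i\geq 2}$, asymptotic freeness shows that almost every $U_1$ making $U_1\mathbf{A}_1 U_1^*$ a microstate of $\mathbf{X}_1$ also produces a joint microstate of the full family. Integrating yields $\chi_\mathrm{orb}(\mathbf{X}_1,\mathbf{X}_2,\dots,\mathbf{X}_n)\geq \chi_\mathrm{orb}(\mathbf{X}_1)+\chi_\mathrm{orb}(\mathbf{X}_2,\dots,\mathbf{X}_n)$; the f.d.a.\ clause follows from (3).

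For (8), the `if' direction iterates (7) and applies (3). For `only if', (2) forces f.d.a.; if the $\mathbf{X}_i$ were not free, some mixed moment $\tau(X_{i_1 j_1}\cdots X_{i_l j_l})$ would differ from the corresponding free-product moment by some $\eta>0$. For any $(\mathbf{A}_i)\in \prod_i \Gamma_R(\mathbf{X}_i;N,m,\delta)$, Voiculescu's asymptotic freeness says the $\gamma^{\otimes n}$-expectation of the matricial mixed moment of $(U_i\mathbf{A}_i U_i^*)$ converges to the free-product value, while Gaussian concentration on $\mathrm{U}(N)^n$ (Lipschitz constant of order $N^{-1/2}$ with respect to the normalized Hilbert--Schmidt metric) gives concentration at rate $e^{-cN^2\eta^2}$. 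Hence the $\gamma^{\otimes n}$-measure of those $(U_i)$ with matricial mixed moment $\eta/2$-close to $\tau(X_{i_1 j_1}\cdots X_{i_l j_l})$ is at most $e^{-cN^2\eta^2}$ for large $N$; taking $\sup_{(\mathbf{A}_i)}$ via Proposition \ref{P-2-4} gives $\chi_\mathrm{orb}\leq -c\eta^2<0$, a contradiction. The principal obstacle lies in (7) and (8), where Voiculescu's asymptotic freeness must be applied with the conjugated matrix tuples themselves varying over microstate spaces (rather than a single deterministic tuple), and in (8) the concentration estimate must remain effective uniformly in $(\mathbf{A}_i)$.
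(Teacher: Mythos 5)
Your treatment of (1)--(6) follows the paper's route. For (7), the paper's key tool is exactly the quantitative form of Voiculescu's asymptotic freeness result, \cite[Corollary 2.13]{Voiculescu:IMRN98}: it gives, uniformly over the microstate tuples $(\mathbf{A}_i)$ and over all choices of $(U_i)_{i\geq 2}$, a lower bound $>\tfrac12$ on the $\gamma_{\mathrm{U}(N)}$-measure of the set of $U_1$ making the conjugated families $(m,\delta')$-free, for all $N$ past some $N_0$. Your ``almost every $U_1$'' should be replaced by this quantitative bound, and the uniformity that you flag at the end as ``the principal obstacle'' is in fact already built into that corollary; once you cite it, your conditioning/integration scheme coincides with the paper's (the paper chooses near-optimal $\mathbf{A}_i^{(k)}$ via $\bar\chi_{\mathrm{orb},R}$ and sets $\mu_{N_k}$ to be the normalized restriction of Haar measure to the orbital microstate set of the second block, but the computation is the same).

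The genuine divergence is (8), ``only if''. The paper deduces it from the orbital Talagrand inequality of Biane--Dabrowski (equivalently, from the free Wasserstein estimate $W_{2,\mathrm{free}}(\tau,\tau^{\mathrm{free}})\leq 4R\sqrt{r}\sqrt{-\chi_{\mathrm{orb}}}$ worked out in the Appendix), so that $\chi_{\mathrm{orb}}=0$ forces $\tau=\tau^{\mathrm{free}}$. You instead run a concentration-of-measure argument on $\mathrm{U}(N)^n$: if freeness fails, some mixed $*$-moment is off by $\eta>0$ from the free-product value, and for matrix tuples of operator norm $\leq R$ the relevant trace word is Lipschitz in the Haar variables with rate giving deviation probability $e^{-cN^2\eta^2}$, while asymptotic freeness pins the $\gamma^{\otimes n}$-mean to the free-product value uniformly over $(\mathbf{A}_i)$ in the microstate spaces. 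This is a valid and self-contained alternative; it trades the free-Wasserstein machinery for two standard inputs (Voiculescu's asymptotic freeness with its uniformity, plus Gromov--Milman concentration on $\mathrm{U}(N)$), and it yields the slightly more informative quantitative bound $\chi_{\mathrm{orb}}\leq -c\eta^2$. One small slip: the Lipschitz constant of the normalized trace word $(U_i)\mapsto\mathrm{tr}_N(U_{i_1}A_{i_1j_1}U_{i_1}^*\cdots)$ is of order $1$ in the \emph{normalized} Hilbert--Schmidt metric $\|\cdot\|_{2,\mathrm{tr}_N}$ (equivalently, of order $N^{-1/2}$ in the \emph{unnormalized} one); you have the two conventions crossed, though the resulting rate $e^{-cN^2\eta^2}$ is stated correctly.
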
 
\begin{proof} (1)--(4) are trivial or straightforward, and left to the reader. 

(5) As in the proof of \cite[Proposition 2.6]{Voiculescu:InventMath94}, for given $m \in \mathbb{N}$ and $\delta>0$ we have 
$$
\Gamma_R(\mathbf{X}_1^{(k)}\sqcup\cdots\sqcup\mathbf{X}_n^{(k)};N,m,\delta) 
\subseteq 
\Gamma_R(\mathbf{X}_1\sqcup\cdots\sqcup\mathbf{X}_n;N,m,2\delta)
$$
for all sufficiently large $k \in \mathbb{N}$ (this is valid even for $R=\infty$), and hence   
\begin{align*} 
\chi_{\mathrm{orb},R}(\mathbf{X}_1^{(k)},\dots,\mathbf{X}_n^{(k)}\,;\,N,m,\delta;\mu) 
&\leq 
\chi_{\mathrm{orb},R}(\mathbf{X}_1,\dots,\mathbf{X}_n\,;\,N,m,2\delta\,;\,\mu) \\
&\leq 
\chi_{\mathrm{orb},R}(\mathbf{X}_1,\dots,\mathbf{X}_n\,;\,N,m,2\delta)
\end{align*}
for all the same large $k \in \mathbb{N}$. Thus 
$$
\limsup_{k\to\infty}\chi_{\mathrm{orb},R}(\mathbf{X}_1^{(k)},\dots,\mathbf{X}_n^{(k)}) 
\leq 
\chi_{\mathrm{orb},R}(\mathbf{X}_1,\dots,\mathbf{X}_n\,;\,m,2\delta),
$$
implying the desired assertion. 

(6) Choose arbitrary $m \in \mathbb{N}$ and $\delta>0$. By the Kaplansky density theorem one can choose tuples of self-adjoint polynomials $\mathbf{P}_i = (P_{ij})_{j=1}^{r'(i)} \in \prod_{j=1}^{r'(i)}\mathbb{C}\langle\mathbf{X}_i\rangle$ in such a way that $\Vert P_{ij}(\mathbf{X}_i)\Vert_\infty \leq \Vert Y_{ij}\Vert_\infty$ and $\Vert Y_{ij} - P_{ij}(\mathbf{X}_i)\Vert_{1,\tau} \leq \Vert Y_{ij} - P_{ij}(\mathbf{X}_i)\Vert_{2,\tau} < \delta/(2mL^{m-1})$ with $L:=\max\{\Vert Y_{ij}\Vert_\infty\,|\,1 \leq j \leq r(i), 1 \leq i \leq n\} + 1$. Looking at the $P_{ij}$'s one can also choose $m' \in \mathbb{N}$ and $\delta' > 0$ in such a way that if $(\mathbf{A}_i)_{i=1}^n \in \Gamma_\infty(\mathbf{X}_1\sqcup\cdots\sqcup\mathbf{X}_n\,;\,N,m',\delta')$, then 
$$
|\mathrm{tr}_N(P_{i_1 j_1}(\mathbf{A}_{i_1})\cdots P_{i_l j_l}(\mathbf{A}_{i_l})) - \tau(P_{i_1 j_1}(\mathbf{X}_{i_1})\cdots P_{i_l j_l}(\mathbf{X}_{i_l}))| < \frac{\delta}{2}
$$
for every $1 \leq j_k \leq r'(i)$, $1 \leq i_k \leq n$, $1 \leq k \leq l$ and $1 \leq l \leq m$. Remark here that $m', \delta'$ are independent of $N$ by the way of finding them. If $(\mathbf{A}_i)_{i=1}^n \in \Gamma_\infty(\mathbf{X}_1\sqcup\cdots\sqcup\mathbf{X}_n\,;\,N,m',\delta')$, then 
\begin{align*}
|\mathrm{tr}_N&(P_{i_1 j_1}(\mathbf{A}_{i_1})\cdots P_{i_l j_l}(\mathbf{A}_{i_l})) - \tau(Y_{i_1 j_1}\cdots Y_{i_l j_l})| \\
&\leq
\frac{\delta}{2} 
+ |\tau(P_{i_1 j_1}(\mathbf{X}_{i_1})\cdots P_{i_l j_l}(\mathbf{X}_{i_l})) - \tau(Y_{i_1 j_1}\cdots Y_{i_l j_l})| \\
&\leq
\frac{\delta}{2} + l L^{l-1} \max\{\Vert P_{ij}(\mathbf{X}_i) - Y_{ij}\Vert_{\tau,1}\,|\,1 \leq j \leq r'(i), 1 \leq i \leq n\} < \frac{\delta}{2} + \frac{\delta}{2} = \delta,
\end{align*}
implying $(\mathbf{P}_i(\mathbf{A}_i))_{i=1}^n = ((P_{ij}(\mathbf{A}_i))_{j=1}^{r'(i)})_{i=1}^n \in \Gamma_\infty(\mathbf{Y}_1\sqcup\cdots\sqcup\mathbf{Y}_n\,;\,N,m,\delta)$. Hence, if $(U_i)_{i=1}^n \in \Gamma_\mathrm{orb}(\mathbf{X}_1,\dots,\mathbf{X}_n:(\mathbf{A}_i)_{i=1}^n\,;\,N,m',\delta')$, i.e., $(U_i\mathbf{A}_i U_i^*)_{i=1}^n \in \Gamma_\infty(\mathbf{X}_1\sqcup\cdots\sqcup\mathbf{X}_n\,;\,N,m', \delta')$, then $(U_i\mathbf{P}_i(\mathbf{A}_i)U_i^*)_{i=1}^n = (\mathbf{P}_i(U_i\mathbf{A}_i U_i^*))_{i=1}^n \in \Gamma_\infty(\mathbf{Y}_1\sqcup\cdots\sqcup\mathbf{Y}_n\,;\,N,m,\delta)$ by the above consideration so that $(U_i)_{i=1}^n \in \Gamma_\mathrm{orb}(\mathbf{Y}_1,\dots,\mathbf{Y}_n:(\mathbf{P}_i(\mathbf{A}_i))_{i=1}^n\,;\,N,m,\delta)$. Consequently 
$$ 
\bar{\chi}_{\mathrm{orb},\infty}(\mathbf{X}_1,\dots,\mathbf{X}_n\,;\,N,m',\delta') \leq 
\bar{\chi}_{\mathrm{orb},\infty}(\mathbf{Y}_1,\dots,\mathbf{Y}_n\,;\,N,m,\delta).
$$ 
Hence the desired inequality immediately follows thanks to Proposition \ref{P-2-4} and Lemma \ref{L-2-5}. The latter assertion is immediate. 

(7) We may and do assume that $\mathbf{X}_1$ has f.d.a.~or equivalently $\chi_\mathrm{orb}(\mathbf{X}_1) = 0$ by the above (3) and moreover that $\chi_\mathrm{orb}(\mathbf{X}_2,\dots,\mathbf{X}_n) > -\infty$; otherwise $\chi_\mathrm{orb}(\mathbf{X}_1,\dots,\mathbf{X}_n) = -\infty = \chi_\mathrm{orb}(\mathbf{X}_1) + \chi_\mathrm{orb}(\mathbf{X}_2,\dots,\mathbf{X}_n)$ by the above (4). Here we need to prove only that $\chi_\mathrm{orb}(\mathbf{X}_1,\dots,\mathbf{X}_n) \geq \chi_\mathrm{orb}(\mathbf{X}_2,\dots,\mathbf{X}_n)$ thanks to the above (4) again. 

Let $R > \max\{\Vert X_{ij}\Vert_\infty\,|\,1\leq j\leq r(i), 1 \leq i \leq n\}$ be fixed, and $m \in \mathbb{N}$, $\delta>0$ be given. One can choose $\delta'>0$ in such a way that if $\mathbf{A}_1 \in \Gamma_R(\mathbf{X}_1\,;\,N,m,\delta')$ and $(\mathbf{A}_i)_{i=2}^n \in \Gamma_R(\mathbf{X}_2\sqcup\cdots\sqcup\mathbf{X}_n\,;\,N,m,\delta')$ are $(m,\delta')$-free in the sense of \cite[Definition 2.10]{Voiculescu:IMRN98}, then $(\mathbf{A}_i)_{i=1}^n \in \Gamma_R(\mathbf{X}_1\sqcup\cdots\sqcup\mathbf{X}_n\,;\,N,m,\delta)$. Since $\chi_{\mathrm{orb},R}(\mathbf{X}_2,\dots,\mathbf{X}_n) = \chi_\mathrm{orb}(\mathbf{X}_2,\dots,\mathbf{X}_n) > -\infty$ (due to Lemma \ref{L-2-5}), one can choose a subsequence $N_1 < N_2 < \cdots$ in such a way that $\bar{\chi}_{\mathrm{orb},R}(\mathbf{X}_2,\dots,\mathbf{X}_n\,;\,N_k,m,\delta') > -\infty$ for all $k \in \mathbb{N}$ and 
$$
\limsup_{N\to\infty}\frac{1}{N^2}\bar{\chi}_{\mathrm{orb},R}(\mathbf{X}_2,\dots,\mathbf{X}_n\,;\,N,m,\delta') = 
\lim_{k\to\infty}\frac{1}{N_k^2}\bar{\chi}_{\mathrm{orb},R}(\mathbf{X}_2,\dots,\mathbf{X}_n\,;\,N_k,m,\delta').
$$ 
For each $k \in \mathbb{N}$ we can choose $\mathbf{A}_i^{(k)} \in ((M_N^\mathrm{sa})_R)^{r(i)}$, $2 \leq i \leq n$, in such a way that
$$
\bar{\chi}_{\mathrm{orb},R}(\mathbf{X}_2,\dots,\mathbf{X}_n\,;\,N_k,m,\delta') - 1
\leq 
\log\Big(\gamma_{\mathrm{U}(N)}^{\otimes n-1}\big(\Gamma_\mathrm{orb}(\mathbf{X}_2,\dots,\mathbf{X}_n:(\mathbf{A}_i^{(k)})_{i=2}^n\,;\,N_k,m,\delta')\big)\Big),   
$$
and in particular, $\gamma_{\mathrm{U}(N_k)}^{\otimes n-1}\big(\Gamma_\mathrm{orb}(\mathbf{X}_1,\dots,\mathbf{X}_n:(\mathbf{A}_i^{(k)})_{i=2}^n\,;\,N_k,m,\delta')\big) \gneqq 0$. Taking a subsequence of $N_k$ if necessary we can choose $\mathbf{A}_1^{(k)} \in \Gamma_R(\mathbf{X}_1\,;\,N_k,m,\delta')$ for every $k \in \mathbb{N}$, since $\mathbf{X}_1$ has f.d.a. For simplicity, write 
\begin{align*} 
\Psi(N_k,m,\delta) &:= \Gamma_\mathrm{orb}(\mathbf{X}_1,\dots,\mathbf{X}_n:(\mathbf{A}_i^{(k)})_{i=1}^n\,;\,N_k,m,\delta), \\ 
\Phi(N_k,m,\delta') &:= \Gamma_\mathrm{orb}(\mathbf{X}_2,\dots,\mathbf{X}_n:(\mathbf{A}_i^{(k)})_{i=2}^n\,;\,N_k,m,\delta').
\end{align*}  
Define 
$$
\Omega(N_k,m,\delta') := 
\{ (U_i)_{i=1}^n \in \mathrm{U}(N)^n\,|\,\text{$U_1\mathbf{A}_1^{(k)}U_1^*$, $(U_i\mathbf{A}_i^{(k)} U_i^*)_{i=2}^n$ are $(m,\delta')$-free}\} 
$$
and 
$$
\mu_{N_k} := \frac{1}{\gamma_{\mathrm{U}(N_k)}^{\otimes n-1}(\Phi(N_k,m,\delta'))}\gamma_{\mathrm{U}(N_k)}^{\otimes n-1}\!\upharpoonright_{\Phi(N_k,m,\delta')}\, \in\, \mathcal{P}\big(\Phi(N_k,m,\delta')\big).
$$ 
Let $(U_i)_{i=1}^n \in \mathrm{U}(N_k)^n$ be given. If $(U_i)_{i=2}^n \in \Phi(N_k,m,\delta')$ and $(U_i)_{i=1}^n \in \Omega(N_k,m,\delta')$, then $(U_i\mathbf{A}_i^{(k)}U_i^*)_{i=1}^n \in \Gamma_R(\mathbf{X}_1,\dots,\mathbf{X}_n\,;\,N_k,m,\delta)$ as seen before, which is equivalent to that $(U_i)_{i=1}^n \in \Psi(N_k,m,\delta)$. Consequently, $\big(\mathrm{U}(N_k)\times\Phi(N_k,m,\delta')\big)\,\cap\,\Omega(N_k,m,\delta') \subseteq \Psi(N_k,m,\delta)$. By \cite[Corollary 2.13]{Voiculescu:IMRN98} there exists $N_0 \in \mathbb{N}$ such that 
$$
\gamma_{\mathrm{U}(N_k)}\big(\{U_1 \in \mathrm{U}(N_k)\,|\,\text{$(U_i)_{i=1}^n \in \Omega(N_k,m,\delta')$}\}\big) > \frac{1}{2}
$$
for every $N_k \geq N_0$ and $(U_i)_{i=2}^n \in \mathrm{U}(N_k)^{n-1}$. Hence we have
\begin{align*} 
&\frac{\gamma_{\mathrm{U}(N_k)}^{\otimes n}\big(\Psi(N_k,m,\delta)\big)}{\gamma_{\mathrm{U}(N_k)}^{\otimes n-1}\big(\Phi(N_k,m,\delta')\big)} 
\geq 
(\gamma_{\mathrm{U}(N_k)}\otimes\mu_{N_k})\big(\Psi(N_k,m,\delta)\big) \\
&\geq 
(\gamma_{\mathrm{U}(N_k)}\otimes\mu_{N_k})\big(\big(\mathrm{U}(N_k)\times\Phi(N_k,m,\delta')\big)\cap\Omega(N_k,m,\delta')\big) \\
&= 
\int_{\Phi(N_k,m,\delta')} \gamma_{\mathrm{U}(N_k)}\big(\{U_1 \in \mathrm{U}(N_k)\,|\,\text{$(U_i)_{i=1}^n \in \Omega(N_k,m,\delta)$}\}\big)\,\mu_{N_k}(d(U_i)_{i=2}^n) > \frac{1}{2}
\end{align*}
whenever $N_k \geq N_0$ by the Fubini theorem as in Lemma \ref{L-2-2}. Therefore, we have 
\begin{align*}
\bar{\chi}_{\mathrm{orb},R}(\mathbf{X}_2,\dots,\mathbf{X}_n\,;\,N_k,m,\delta') 
&\leq  
\log\big(\gamma_{\mathrm{U}(N_k)}^{\otimes n-1}\big(\Phi(N_k,m,\delta')\big)\big) + 1 \\
&< 
\log\big(2\gamma_{\mathrm{U}(N_k)}^{\otimes n}\big(\Psi(N_k,m,\delta)\big)\big) + 1 \\
&\leq \bar{\chi}_{\mathrm{orb},R}(\mathbf{X}_1,\dots,\mathbf{X}_n\,;\,N_k,m,\delta) + \log2 + 1  
\end{align*} 
whenever $N_k \geq N_0$, and thus
\begin{align*} 
\chi_{\mathrm{orb},R}(\mathbf{X}_2,\dots,\mathbf{X}_n) &\leq 
\limsup_{N\to\infty}\frac{1}{N^2}\bar{\chi}_{\mathrm{orb},R}(\mathbf{X}_2,\dots,\mathbf{X}_n\,;\,N,m,\delta') \\
&= 
\lim_{k\to\infty}\frac{1}{N_k^2}\bar{\chi}_{\mathrm{orb},R}(\mathbf{X}_2,\dots,\mathbf{X}_n\,;\,N_k,m,\delta') \\ 
&\leq 
\limsup_{k\to\infty}\frac{1}{N_k^2}\Big(\bar{\chi}_{\mathrm{orb},R}(\mathbf{X}_1,\dots,\mathbf{X}_n\,;\,N_k,m,\delta) + \log2 + 1\Big) \\
&= 
\limsup_{k\to\infty}\frac{1}{N_k^2}\bar{\chi}_{\mathrm{orb},R}(\mathbf{X}_1,\dots,\mathbf{X}_n\,;\,N_k,m,\delta) \\
&\leq 
\limsup_{N\to\infty}\frac{1}{N^2}\bar{\chi}_{\mathrm{orb},R}(\mathbf{X}_1,\dots,\mathbf{X}_n\,;\,N,m,\delta). 
\end{align*} 
Hence the desired inequality follows thanks to Lemma \ref{L-2-5}. 

(8) The `if' part follows from the above (7). Since $-\tilde{\chi}_\mathrm{orb}(\mathbf{X}_1,\dots,\mathbf{X}_n) \leq -\chi_\mathrm{orb}(\mathbf{X}_1,\dots,\mathbf{X}_n)$ due to Proposition \ref{P-2-4}, the orbital version of Talagrand's inequality \cite[Theorem 7.3 (9)]{BianeDabrowski:AdvMath13} also holds for our $\chi_\mathrm{orb}$. Hence the `only if' is immediate. A direct proof of the inequality can be given in the almost same way as in \cite[Proposition 4.4 (8)]{HiaiMiyamotoUeda:IJM09} and is a bit simpler than that for $\tilde{\chi}_\mathrm{orb}$; it will be outlined in the Appendix for the reader's convenience. 
\end{proof} 

The next corollary strengthens Lemma \ref{L-2-5}. 

\begin{corollary}\label{C-2-7} We have $\chi_\mathrm{orb}(\mathbf{X}_1,\dots,\mathbf{X}_n) = \chi_{\mathrm{orb},R}(\mathbf{X}_1,\dots,\mathbf{X}_n)$ with $R := \max\{\Vert X_{ij}\Vert_\infty\,|\,1\leq j \leq r(i), 1 \leq i \leq n\}$. 
\end{corollary}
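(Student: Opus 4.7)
The plan is to bootstrap from Lemma \ref{L-2-5} via a simple scaling trick combined with the lower semi-continuity property of Theorem \ref{T-2-6}(5). The inequality $\chi_{\mathrm{orb},R}(\mathbf{X}_1,\dots,\mathbf{X}_n) \leq \chi_\mathrm{orb}(\mathbf{X}_1,\dots,\mathbf{X}_n)$ is trivial from the definition of $\chi_\mathrm{orb}$ as a supremum over cutoffs, so the substantive content is the reverse inequality. The degenerate case $R = 0$ forces every $X_{ij} = 0$ and is disposed of directly (both sides are $0$), so assume $R = \rho > 0$.

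For each integer $k \geq 1$, introduce the uniformly scaled tuples $\mathbf{X}_i^{(k)} := ((1-1/k)X_{ij})_{j=1}^{r(i)}$. Since $(1-1/k)$ is a non-zero scalar, $W^*(\mathbf{X}_i^{(k)}) = W^*(\mathbf{X}_i)$, so Theorem \ref{T-2-6}(6) gives $\chi_\mathrm{orb}(\mathbf{X}_1^{(k)},\dots,\mathbf{X}_n^{(k)}) = \chi_\mathrm{orb}(\mathbf{X}_1,\dots,\mathbf{X}_n)$. On the other hand, the maximum operator norm of the entries of $(\mathbf{X}_i^{(k)})_{i=1}^n$ is $(1-1/k)\rho < \rho$, so Lemma \ref{L-2-5} applied to the scaled tuples with cutoff $\rho$ yields
\begin{equation*}
\chi_{\mathrm{orb},\rho}(\mathbf{X}_1^{(k)},\dots,\mathbf{X}_n^{(k)}) = \chi_\mathrm{orb}(\mathbf{X}_1^{(k)},\dots,\mathbf{X}_n^{(k)}) = \chi_\mathrm{orb}(\mathbf{X}_1,\dots,\mathbf{X}_n).
\end{equation*}

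To conclude, observe that $\mathbf{X}_1^{(k)}\sqcup\cdots\sqcup\mathbf{X}_n^{(k)}$ converges to $\mathbf{X}_1\sqcup\cdots\sqcup\mathbf{X}_n$ in joint distribution as $k \to \infty$, since any $l$-th order mixed moment of the scaled tuple differs from the corresponding moment of the original by a factor of $(1-1/k)^l \to 1$. Hence Theorem \ref{T-2-6}(5) applied with the fixed cutoff $R = \rho$ gives
\begin{equation*}
\chi_{\mathrm{orb},\rho}(\mathbf{X}_1,\dots,\mathbf{X}_n) \geq \limsup_{k\to\infty}\chi_{\mathrm{orb},\rho}(\mathbf{X}_1^{(k)},\dots,\mathbf{X}_n^{(k)}) = \chi_\mathrm{orb}(\mathbf{X}_1,\dots,\mathbf{X}_n),
\end{equation*}
which is the desired inequality. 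I do not anticipate a serious obstacle here: the argument is a routine scaling-and-limit; the only mild point is to notice that the strict inequality $\rho > (1-1/k)\rho$, available precisely when $\rho > 0$, is what makes Lemma \ref{L-2-5} applicable to the approximating tuples.
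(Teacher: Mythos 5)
Your proof is correct and is essentially the paper's own argument: the paper also deduces the corollary from Theorem 2.6 (5) (lower semi-continuity with fixed cutoff), Lemma 2.5 (cutoff is inessential once strictly above $\rho$), and Theorem 2.6 (6) ($\chi_\mathrm{orb}$ depends only on $W^*(\mathbf{X}_i)$), applied to the scaled family $t\mathbf{X}_i$ with $t\nearrow1$. Your discrete choice $t=1-1/k$ and the explicit dispatch of the degenerate case $R=0$ are only cosmetic differences.
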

\begin{proof} With $t\mathbf{X}_i := (tX_{i1},\dots,tX_{ir(i)})$, $0 < t < 1$, we have 
\begin{align*} 
\chi_{\mathrm{orb},R}(\mathbf{X}_1,\dots,\mathbf{X}_n) 
&\geq 
\limsup_{t\nearrow1}\chi_{\mathrm{orb},R}(t\mathbf{X}_1,\dots,t\mathbf{X}_n) \quad \text{(by Theorem \ref{T-2-6} (5))} \\
&=
\limsup_{t\nearrow1}\chi_\mathrm{orb}(t\mathbf{X}_1,\dots,t\mathbf{X}_n) \quad \text{(by Lemma \ref{L-2-5})} \\
&=
\limsup_{t\nearrow1}\chi_\mathrm{orb}(\mathbf{X}_1,\dots,\mathbf{X}_n) \quad \text{(by Theorem \ref{T-2-6} (6))} \\
&= 
\chi_\mathrm{orb}(\mathbf{X}_1,\dots,\mathbf{X}_n).
\end{align*} 
Hence we are done. 
\end{proof}  

The next proposition relates $\chi_\mathrm{orb}$ with free entropy $\chi$. Unfortunately we do not know whether or not equality holds in the proposition below except the case that every $\mathbf{X}_i$ consists of a single random variable, see \cite[Theorem 2.6]{HiaiMiyamotoUeda:IJM09}. This issue will be discussed further in Remark \ref{R-2-9}.        

\begin{proposition}\label{P-2-8} We have $\chi(\mathbf{X}_1\sqcup\cdots\sqcup\mathbf{X}_n) \leq \chi_\mathrm{orb}(\mathbf{X}_1,\dots,\mathbf{X}_n) + \sum_{i=1}^n \chi(\mathbf{X}_i)$. 
\end{proposition}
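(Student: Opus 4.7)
The plan is to exploit the explicit Fubini decomposition provided by Lemma~\ref{L-2-2}, combined with the unitary invariance of Lebesgue measure, using the uniform distribution on the product of individual microstate spaces as the test measure $\mu$ in Definition~\ref{D-2-1}.

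The key identity I would establish first is that for any Borel set $S \subseteq \prod_i (M_N^\mathrm{sa})^{r(i)}$,
\begin{equation*}
\bigl(\gamma_{\mathrm{U}(N)}^{\otimes n}\otimes \Lambda_N^{\otimes r}\bigr)(\Phi_N^{-1}(S)) = \Lambda_N^{\otimes r}(S), \qquad r := \sum_{i=1}^n r(i),
\end{equation*}
obtained by integrating in $(\mathbf{A}_i)$ first: for each fixed $(U_i)$, the map $\mathbf{A}_i \mapsto U_i\mathbf{A}_i U_i^*$ is orthogonal in the Hilbert--Schmidt inner product, hence its Jacobian is $1$. Next I would note that if $(U_i\mathbf{A}_i U_i^*)_{i=1}^n \in \Gamma_R(\mathbf{X}_1\sqcup\cdots\sqcup\mathbf{X}_n;N,m,\delta)$, then conjugation invariance of the operator norm and of trace-polynomial moments forces each $\mathbf{A}_i = U_i^*(U_i\mathbf{A}_i U_i^*)U_i$ to lie in $\Gamma_R(\mathbf{X}_i;N,m,\delta) =: \Gamma_i$; equivalently, $\Phi_N^{-1}(\Gamma_\sqcup) \subseteq \mathrm{U}(N)^n \times \prod_i \Gamma_i$, where $\Gamma_\sqcup := \Gamma_R(\mathbf{X}_1\sqcup\cdots\sqcup\mathbf{X}_n;N,m,\delta)$.

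Now take $\mu_N$ to be the normalized Lebesgue restriction $\Lambda_N^{\otimes r}|_{\prod_i \Gamma_i}/\prod_i \Lambda_N^{\otimes r(i)}(\Gamma_i)$; this is a valid probability measure (assuming all $\Lambda_N^{\otimes r(i)}(\Gamma_i) > 0$, otherwise both sides of the target inequality are $-\infty$ trivially). Combining the two observations above gives
\begin{equation*}
\log\frac{\Lambda_N^{\otimes r}(\Gamma_\sqcup)}{\prod_{i=1}^n \Lambda_N^{\otimes r(i)}(\Gamma_i)} = \chi_{\mathrm{orb},R}(\mathbf{X}_1,\dots,\mathbf{X}_n;N,m,\delta;\mu_N) \leq \chi_{\mathrm{orb},R}(\mathbf{X}_1,\dots,\mathbf{X}_n;N,m,\delta).
\end{equation*}
Divide by $N^2$ and add $\frac{r}{2}\log N = \sum_{i=1}^n \frac{r(i)}{2}\log N$; crucially the normalization coefficients on both sides cancel exactly because $r=\sum r(i)$, leaving
\begin{equation*}
\frac{1}{N^2}\log\Lambda_N^{\otimes r}(\Gamma_\sqcup) + \frac{r}{2}\log N \leq \frac{1}{N^2}\chi_{\mathrm{orb},R}(\mathbf{X}_1,\dots,\mathbf{X}_n;N,m,\delta) + \sum_{i=1}^n \Bigl[\frac{1}{N^2}\log\Lambda_N^{\otimes r(i)}(\Gamma_i) + \frac{r(i)}{2}\log N\Bigr].
\end{equation*}

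Finally I would take $\limsup_{N\to\infty}$, using $\limsup\sum \leq \sum\limsup$ on the right, then pass to the infimum over $(m,\delta)$: since the three relevant functions are all monotone as $m\nearrow\infty$, $\delta\searrow0$, their limits pass through the sum to yield $\chi_R(\mathbf{X}_1\sqcup\cdots\sqcup\mathbf{X}_n) \leq \chi_{\mathrm{orb},R}(\mathbf{X}_1,\dots,\mathbf{X}_n) + \sum_i \chi_R(\mathbf{X}_i)$. Choosing $R > \max_{i,j}\Vert X_{ij}\Vert_\infty$ and invoking Lemma~\ref{L-2-5} (for $\chi_\mathrm{orb}$) together with the standard cut-off theorem of Voiculescu (for $\chi$) removes the $R$ from all three terms, yielding the claim.

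The main obstacle I anticipate is not the algebra but the bookkeeping: making sure the $\log N$ normalizations for $\chi$ match exactly (they do only because $r=\sum r(i)$), and handling the $\limsup$ on the right correctly (since $\limsup(b+c)\leq\limsup b + \limsup c$ goes the right direction here, but one has to avoid the temptation to do a $\limsup$--$\liminf$ split that would give a weaker bound). The degenerate case where some $\chi(\mathbf{X}_i) = -\infty$ is handled separately by noting that $\chi(\mathbf{X}_1\sqcup\cdots\sqcup\mathbf{X}_n) = -\infty$ as well, rendering the inequality vacuous.
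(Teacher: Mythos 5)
Your proposal is correct and follows essentially the same route as the paper's proof: both use the normalized Lebesgue restriction to $\prod_i \Gamma_R(\mathbf{X}_i;N,m,\delta)$ as the test measure $\mu$, establish $(\gamma_{\mathrm{U}(N)}^{\otimes n}\otimes\Lambda_N^{\otimes r})(\Phi_N^{-1}(\Gamma_\sqcup)) = \Lambda_N^{\otimes r}(\Gamma_\sqcup)$ via the orthogonality of unitary conjugation, and then match the $\frac{r(i)}{2}\log N$ normalizations term by term before invoking Lemma~\ref{L-2-5} and Voiculescu's cut-off result. The only cosmetic difference is that you phrase the Fubini computation as ``integrate over $\mathbf{A}_i$ first'' and the paper writes out the $(U_i)$-section, but these are the same calculation; the degenerate case (the paper reduces to finite $\chi(\mathbf{X}_i)$ via Voiculescu's Proposition 2.3, you argue the inclusion $\Gamma_\sqcup \subseteq \prod_i\Gamma_i$ directly) is handled equivalently.
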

\begin{proof} By \cite[Proposition 2.3]{Voiculescu:InventMath94} we may and do assume that all $\chi(\mathbf{X}_i)$'s are finite, which implies that all $\mathbf{X}_i$'s have f.d.a., see \cite[Definition 3.1 and Remark 3.2]{Voiculescu:InventMath94}. Let $R > \max\{\Vert X_{ij}\Vert_\infty\,|\,1 \leq j \leq r(i), 1 \leq i \leq n\}$ be fixed, and for every $m \in \mathbb{N}$ and $\delta > 0$ there is $N_0 \in \mathbb{N}$ so that $\Gamma_R(\mathbf{X}_i\,;\,N,m,\delta) \neq \emptyset$ for all $1 \leq i \leq n$ and all $N \geq N_0$. Since each $\Gamma_R(\mathbf{X}_i\,;\,N,m,\delta)$ is clearly open, we observe that the Lebesgue measure $\Lambda_N^{\otimes r(i)}(\Gamma_R(\mathbf{X}_i\,;\,N,m,\delta))$ is nonzero for every $1 \leq i \leq n$ and every $N \geq N_0$. Hence we have the probability measures  
\begin{align}\label{Eq5}
\nu_R(N,m,\delta)  
:&= \sideset{}{^\otimes}\prod_{1\leq i \leq n}\left(\frac{1}{\Lambda_N^{\otimes r(i)}(\Gamma_R(\mathbf{X}_i\,;\,N,m,\delta))}\Lambda_N^{\otimes r(i)}\!\upharpoonright_{\Gamma_R(\mathbf{X}_i\,;\,N,m,\delta)}\right) \notag\\
&= 
\frac{1}{\prod_{i=1}^n\Lambda_N^{\otimes r(i)}(\Gamma_R(\mathbf{X}_i\,;\,N,m,\delta))}\Lambda_N^{\otimes(r(1)+\cdots+r(n))}\!\upharpoonright_{\prod_{i=1}^n\Gamma_R(\mathbf{X}_i\,;\,N,m,\delta)}  
\end{align}   
with $N \geq N_0$. When $N \geq N_0$, one has 
\begin{align*} 
&\chi_{\mathrm{orb},R}(\mathbf{X}_1,\dots,\mathbf{X}_n\,;\,N,m,\delta) 
\geq 
\chi_{\mathrm{orb},R}(\mathbf{X}_1,\dots,\mathbf{X}_n\,;\,N,m,\delta\,;\,\nu_R(N,m,\delta)) \\
&= 
\log\big((\gamma_{\mathrm{U}(N)}^{\otimes n}\otimes\Lambda_N^{\otimes(r(1)+\cdots+r(n))})(\Phi_N^{-1}(\Gamma_R(\mathbf{X}_1\sqcup\cdots\sqcup\mathbf{X}_n\,;\,N,m,\delta)))\big) \\
&\quad\quad\quad\quad\quad\quad\quad\quad\quad\quad\quad\quad\quad\quad\quad\quad\quad- 
\sum_{i=1}^n \log(\Lambda_N^{\otimes r(i)}(\Gamma_R(\mathbf{X}_i\,;\,N,m,\delta)),
\end{align*}
that is, 
\begin{align}\label{Eq6}
&\log\big((\gamma_{\mathrm{U}(N)}\otimes\Lambda_N^{\otimes(r(1)+\cdots+r(n))})(\Phi_N^{-1}(\Gamma_R(\mathbf{X}_1\sqcup\cdots\sqcup\mathbf{X}_n\,;\,N,m,\delta)))\big) \notag\\
&\quad\leq
\chi_{\mathrm{orb},R}(\mathbf{X}_1,\dots,\mathbf{X}_n\,;\,N,m,\delta) 
+ 
\sum_{i=1}^n \log(\Lambda_N^{\otimes r(i)}(\Gamma_R(\mathbf{X}_i\,;\,N,m,\delta))
\end{align}
for every $m \in \mathbb{N}$ and every $\delta > 0$. Let $(U_i)_{i=1}^n \in \mathrm{U}(N)^n$ be arbitrarily fixed, and then the $(U_i)_{i=1}^n$--section of $\Phi_N^{-1}(\Gamma_R(\mathbf{X}_1\sqcup\cdots\sqcup\mathbf{X}_n\,;\,N,m,\delta))$ defined to be 
$$
\{(\mathbf{A}_i)_{i=1}^n\,|\, \Phi_N((U_i)_{i=1}^n,(\mathbf{A}_i)_{i=1}^n) \in \Gamma_R(\mathbf{X}_1\sqcup\cdots\sqcup\mathbf{X}_n\,;\,N,m,\delta)\}
$$
clearly becomes $(U_i^*)_{i=1}^n\cdot\Gamma_R(\mathbf{X}_1\sqcup\cdots\sqcup\mathbf{X}_n\,;\,N,m,\delta)\cdot(U_i)_{i=1}^n$. Hence we have 
\begin{align*} 
&(\gamma_{\mathrm{U}(N)}^{\otimes n}\otimes\Lambda_N^{\otimes(r(1)+\cdots+r(n))})(\Phi_N^{-1}(\Gamma_R(\mathbf{X}_1\sqcup\cdots\sqcup\mathbf{X}_n\,;\,N,m,\delta))) \\
&\quad= 
\int_{\mathrm{U}(N)^n} 
\Lambda_N^{\otimes(r(1)+\cdots+r(n))}\big((U_i^*)_{i=1}^n\cdot\Gamma_R(\mathbf{X}_1\sqcup\cdots\sqcup\mathbf{X}_n\,;\,N,m,\delta)\cdot(U_i)_{i=1}^n\big)\,d\gamma_{\mathrm{U}(N)}^{\otimes n} \\
&\quad= 
\int_{\mathrm{U}(N)^n} 
\Lambda_N^{\otimes(r(1)+\cdots+r(n))}\big(\Gamma_R(\mathbf{X}_1\sqcup\cdots\sqcup\mathbf{X}_n\,;\,N,m,\delta)\big)\,d\gamma_{\mathrm{U}(N)}^{\otimes n} \\
&\quad= 
\Lambda_N^{\otimes(r(1)+\cdots+r(n))}\big(\Gamma_R(\mathbf{X}_1\sqcup\cdots\sqcup\mathbf{X}_n\,;\,N,m,\delta)\big)
\end{align*}
as in Lemma \ref{L-2-2}, since $(\mathbf{A}_i)_{i=1}^n \mapsto (U_i^*)_{i=1}\cdot(\mathbf{A}_i)_{i=1}^n\cdot(U_i)_{i=1}^n = (U_i^*\mathbf{A}_i U_i)_{i=1}^n$ induces an orthogonal transformation on $\prod_{1 \leq i \leq n}(M_N^\mathrm{sa})^{r(i)}$. By taking the limit as $m\to\infty$, $\delta\searrow 0$ after taking the limit superior as $N\to\infty$ of \eqref{Eq6} plus $(\sum_{i=1}^n r(i)/2)\log N$ we get the desired inequality thanks to Lemma \ref{L-2-5} and \cite[Proposition 2.4]{Voiculescu:InventMath94}.      
\end{proof} 

\begin{remark}\label{R-2-9} {\rm Here we assume that each $\mathbf{X}_i$ has f.d.a., that is, for every $m \in \mathbb{N}$ and $\delta > 0$ there is $N_{m,\delta} \in \mathbb{N}$ such that for every $N \geq N_{m,\delta}$ and every $1 \leq i \leq n$ one has $\Gamma_R(\mathbf{X}_i\,;\,N,m,\delta) \neq \emptyset$ and hence $\Lambda_N^{\otimes r(i)}(\Gamma_R(\mathbf{X}_i\,;\,N,m,\delta)) \neq 0$ with $R > \max\{\Vert X_{ij}\Vert_\infty\,|\,1 \leq j \leq r(i), 1 \leq i \leq n\}$. Thus we have the `uniform' probability measures $\nu_R(N,m,\delta)$ on $\prod_{1\leq i\leq n}\Gamma_R(\mathbf{X}_i\,;\,N,m,\delta)$ given as \eqref{Eq5} for every $N \geq N_{m,\delta}$. For every $N \geq N_{m,\delta}$ we have  
\begin{align*}
\chi_{\mathrm{orb},R}(\mathbf{X}_1,\dots,\mathbf{X}_n\,;\,N,m,\delta\,;\,\nu_R(N,m,\delta))& \\
= 
\log\big((\gamma_{\mathrm{U}(N)}^{\otimes n}\otimes\Lambda_N^{\otimes(r(1)+\cdots+r(n))})&(\Phi_N^{-1}(\Gamma_R(\mathbf{X}_1\sqcup\cdots\sqcup\mathbf{X}_n\,;\,N,m,\delta)))\big) \\
&\quad- 
\sum_{i=1}^n \log(\Lambda_N^{\otimes r(i)}(\Gamma_R(\mathbf{X}_i\,;\,N,m,\delta)),
\end{align*}
and hence
\begin{align*}
\chi_{\mathrm{orb},R}(\mathbf{X}_1,\dots,\mathbf{X}_n\,;\,N,m,\delta\,;\,\nu_R(N,m,\delta)) + \sum_{i=1}^n \log(\Lambda_N^{\otimes r(i)}(\Gamma_R(\mathbf{X}_i\,;\,N,m,\delta))& \notag\\
= 
\log\big(\Lambda_N^{\otimes(r(1)+\cdots+r(n))}\big(\Gamma_R(\mathbf{X}_1\sqcup\cdots\sqcup\mathbf{X}_n\,;\,N,m,\delta)\big)\big)&
\end{align*}
as in the proof of Proposition \ref{P-2-8}. 
This equality may be viewed as a microscopic version of the desired `equality'. Hence, if the limit superior as $N\to\infty$ in the definition of every $\chi(\mathbf{X}_i)$ could be replaced by the limit, then the quantity 
\begin{equation*}
C := \sup_{R>0}\lim_{\substack{m\to\infty \\ \delta \searrow 0}}\limsup_{N\to\infty} 
\frac{1}{N^2}\chi_{\mathrm{orb},R}(\mathbf{X}_1,\dots,\mathbf{X}_n\,;\,N,m,\delta\,;\,\nu_R(N,m,\delta))
\end{equation*}
would satisfy that $\chi(\mathbf{X}_1\sqcup\cdots\sqcup\mathbf{X}_n) = C + \sum_{i=1}^n \chi(\mathbf{X}_i)$. Moreover, its ultrafilter variant $C^\omega$ with replacing the limit superior as $N\to\infty$ by the limit as $N\to\omega$ clearly satisfies that $\chi^\omega(\mathbf{X}_1\sqcup\cdots\sqcup\mathbf{X}_n) = C^\omega + \sum_{i=1}^n \chi^\omega(\mathbf{X}_i)$. It would be nice if the above quantity $C$ and $\chi_\mathrm{orb}$ turned out to be the same quantity. However, we cannot say anything about this at the moment. }
\end{remark}

\section{Conditional Variant $\chi_\mathrm{orb}(\mathbf{X}_1,\dots,\mathbf{X}_n:\mathbf{v})$} As in the previous section, let $\mathbf{X}_i = (X_{i1},\dots,X_{ir(i)})$, $1 \leq i \leq n$ be arbitrary random multi-variables in $(\mathcal{M},\tau)$. Let $\mathbf{v} = (v_1,\dots,v_s)$ be an $s$-tuple of unitaries in $(\mathcal{M},\tau)$. The set of matricial microstates $\Gamma_R(\mathbf{X}_1\sqcup\cdots\sqcup\mathbf{X}_n,\mathbf{v}\,;\,N,m,\delta)$ is defined to be all the $((\mathbf{A}_i)_{i=1}^n,(V_i)_{i=1}^s) \in \prod_{i=1}^n ((M_N^\mathrm{sa})_R)^{r(i)} \times \mathrm{U}(N)^s$ such that 
\begin{align*}
|&\mathrm{tr}_N(h(A_{11},\dots,A_{1r(1)},\dots,A_{n1},\dots,A_{n r(n)},V_1,\dots,V_s)) \\
&\quad\quad- \tau(h(X_{11},\dots,X_{1r(1)},\dots,X_{n1},\dots,X_{n r(n)},v_1,\dots,v_s)))| < \delta
\end{align*}
for every $*$-monomial $h$ in $(r(1)+\cdots+r(n)+s)$ indeterminates of degree not greater than $m$. Denote by $\Gamma_R(\mathbf{X}_1\sqcup\cdots\sqcup\mathbf{X}_n:\mathbf{v}\,;\,N,m,\delta)$ the set of all $(\mathbf{A}_i)_{i=1}^n \in \prod_{i=1}^n ((M_N^\mathrm{sa})_R)^{r(i)}$ such that $((\mathbf{A}_i)_{i=1}^n,(V_i)_{i=1}^s) \in \Gamma_R(\mathbf{X}_1\sqcup\cdots\sqcup\mathbf{X}_n,\mathbf{v}\,;\,N,m,\delta)$ for some $(V_i)_{i=1}^s \in \mathrm{U}(N)^s$. For given multi-matrices $\mathbf{A}_i = (A_{ij})_{j=1}^{r(i)} \in (M_N^\mathrm{sa})^{r(i)}$, $1 \leq i \leq n$, the set of orbital microstates $\Gamma_\mathrm{orb}(\mathbf{X}_1,\dots,\mathbf{X}_n:(\mathbf{A}_i)_{i=1}^n:\mathbf{v}\,;\,N,m,\delta)$ \emph{in presence of $\mathbf{v}$} is defined to be all $(U_i)_{i=1}^n \in \mathrm{U}(N)^n$ such that $(U_i\mathbf{A}_i U_i^*)_{i=1}^n \in \Gamma_\infty(\mathbf{X}_1\sqcup\cdots\sqcup\mathbf{X}_n:\mathbf{v}\,;\,N,m,\delta)$. With these notations the \emph{orbital free entropy $\chi_\mathrm{orb}(\mathbf{X}_1,\dots,\mathbf{X}_n:\mathbf{v})$ of $\mathbf{X}_1,\dots,\mathbf{X}_n$ in presence of $\mathbf{v}$} is defined in the exactly same way as in Definition \ref{D-2-1} with replacing $\Gamma_R(\mathbf{X}_1,\dots,\mathbf{X}_n\,;\,N,m,\delta)$ by $\Gamma_R(\mathbf{X}_1,\dots,\mathbf{X}_n:\mathbf{v}\,;\,N,m,\delta)$. Clearly Lemma \ref{L-2-2} still holds in this setting; namely one has 
\begin{align*}
&(\gamma_{\mathrm{U}(N)}^{\otimes n}\otimes\mu)(\Phi_N^{-1}(\Gamma_R(\mathbf{X}_1\sqcup\cdots\sqcup\mathbf{X}_n:\mathbf{v}\,;\,N,m,\delta))) \\ 
&= 
\int_{\prod_{i=1}^n ((M_N^\mathrm{sa})_R)^{r(i)}} \gamma_{\mathrm{U}(N)}^{\otimes n}\big(\Gamma_\mathrm{orb}(\mathbf{X}_1,\dots,\mathbf{X}_n:(\mathbf{A}_i)_{i=1}^n:\mathbf{v}\,;\,N,m,\delta)\big)\,d\mu \\
&= 
\int_{\prod_{i=1}^n \Gamma_R(\mathbf{X}_i\,;\,N,m,\delta)} \gamma_{\mathrm{U}(N)}^{\otimes n}\big(\Gamma_\mathrm{orb}(\mathbf{X}_1,\dots,\mathbf{X}_n:(\mathbf{A}_i)_{i=1}^n:\mathbf{v}\,;\,N,m,\delta)\big)\,d\mu  
\end{align*} 
holds for every probability measure $\mu \in \mathcal{P}(\prod_{i=1}^n(M_N^\mathrm{sa})^{r(i)})$. Hence we can prove the same assertions as Proposition \ref{P-2-3}, Proposition \ref{P-2-4} and Lemma \ref{L-2-5} even for $\chi_\mathrm{orb}(\mathbf{X}_1,\dots,\mathbf{X}_n:\mathbf{v})$ by the same arguments there with obvious modifications (e.g.~replacing $\Gamma_\mathrm{orb}(\mathbf{X}_1,\dots,\mathbf{X}_n:(\mathbf{A}_i)_{i=1}^n\,;\,N,m,\delta)$ there by $\Gamma_\mathrm{orb}(\mathbf{X}_1,\dots,\mathbf{X}_n:(\mathbf{A}_i)_{i=1}^n:\mathbf{v}\,;\,N,m,\delta)$). In particular, the present definition of $\chi_\mathrm{orb}(\mathbf{X}_1,\dots,\mathbf{X}_n:\mathbf{v})$ completely agrees with the previous one in \cite{HiaiMiyamotoUeda:IJM09}. 

The variant $\chi_\mathrm{orb}(\mathbf{X}_1,\dots,\mathbf{X}_n:\mathbf{v})$ is necessary in the next section to define the (modified) orbital free entropy dimension $\delta_{0,\mathrm{orb}}(\mathbf{X},\dots,\mathbf{X}_n)$. For the purpose we provide the next two facts, which generalize the previous ones \cite[Propositions 4.6, 4.7]{HiaiMiyamotoUeda:IJM09} to arbitrary random multi-variables. Note that the first one may be regarded as the $\chi_\mathrm{orb}$-counterpart of \cite[Proposition 10.4]{Voiculescu:AdvMath99}. 

\begin{theorem}\label{T-3-1} Let $\mathbf{v} = (v_1,\dots,v_n)$ be a freely independent $n$-tuple of unitary random variables in $(\mathcal{M},\tau)$ with $\chi_u(v_i) > -\infty$ for all $1 \leq i \leq n$, where $\chi_u(-)$ means free entropy of unitary random variables, see \cite[\S6.5]{HiaiPetz:Book}. If $\mathbf{X}_1\sqcup\cdots\sqcup\mathbf{X}_n$ are freely independent of $\mathbf{v}$ in $(\mathcal{M},\tau)$, then 
\begin{align*} 
\chi_\mathrm{orb}(\mathbf{X}_1,\dots,\mathbf{X}_n) 
\leq 
\chi_\mathrm{orb}(v_1\mathbf{X}_1 v_1^*,\dots,v_n\mathbf{X}_n v_n^*:\mathbf{v}) 
\leq 
\chi_\mathrm{orb}(v_1\mathbf{X}_1 v_1^*,\dots,v_n\mathbf{X}_n v_n^*). 
\end{align*} 
\end{theorem}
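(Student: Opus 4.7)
The second inequality $\chi_\mathrm{orb}(v_1\mathbf{X}_1 v_1^*,\dots,v_n\mathbf{X}_n v_n^*:\mathbf{v}) \leq \chi_\mathrm{orb}(v_1\mathbf{X}_1 v_1^*,\dots,v_n\mathbf{X}_n v_n^*)$ should be immediate from Definition \ref{D-2-1}: writing $\mathbf{Y}_i := v_i\mathbf{X}_i v_i^*$, one has $\Gamma_R(\mathbf{Y}_1\sqcup\cdots\sqcup\mathbf{Y}_n:\mathbf{v};N,m,\delta) \subseteq \Gamma_R(\mathbf{Y}_1\sqcup\cdots\sqcup\mathbf{Y}_n;N,m,\delta)$ (admitting an extension to a joint microstate with $\mathbf{v}$ is a further restriction), so the corresponding orbital microstate spaces and their Haar measures compare accordingly.

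For the substantive first inequality $\chi_\mathrm{orb}(\mathbf{X}_1,\dots,\mathbf{X}_n) \leq \chi_\mathrm{orb}(\mathbf{Y}_1,\dots,\mathbf{Y}_n:\mathbf{v})$, the plan is to pre-rotate a base for $\mathbf{X}$ by a fixed unitary microstate of $\mathbf{v}$, producing a base for $\mathbf{Y}$ whose orbital microstate space (in presence of $\mathbf{v}$) has essentially the same Haar measure, via a Haar-preserving change of variables. I would work with the conditional analogue of Proposition \ref{P-2-4}. Since $\chi_u(v_i)>-\infty$ and the $v_i$'s are free, $\mathbf{v}$ has f.d.a.\ as a tuple, so for any $(m,\delta)$ and large $N$ one can pick $V_0 = (V_{0,i})_{i=1}^n \in \Gamma_R(\mathbf{v};N,m,\delta)$. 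Given a base $\mathbf{A} = (\mathbf{A}_i)_{i=1}^n$ nearly realizing $\bar\chi_{\mathrm{orb},R}(\mathbf{X}_1,\dots,\mathbf{X}_n;N,m',\delta')$ for appropriately stricter $(m',\delta')$, set $\mathbf{A}' := (V_{0,i}\mathbf{A}_i V_{0,i}^*)_{i=1}^n$ coordinate-wise. For $U \in \mathrm{U}(N)^n$ and the Haar-preserving reparametrization $\tilde U_i := V_{0,i}^* U_i V_{0,i}$, direct computation yields $U_i(V_{0,i}\mathbf{A}_i V_{0,i}^*)U_i^* = V_{0,i}(\tilde U_i\mathbf{A}_i\tilde U_i^*)V_{0,i}^*$. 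Combined with the fact that the coordinate-wise moment map $(A_i,V_i)\mapsto(V_iA_iV_i^*,V_i)$ is a trace-preserving bijection carrying the joint distribution of $(\mathbf{X}_i,v_i)$ onto that of $(\mathbf{Y}_i,v_i)$, this makes the condition $(U\mathbf{A}'U^*,V_0)\in\Gamma_R(\mathbf{Y}_1\sqcup\cdots\sqcup\mathbf{Y}_n,\mathbf{v};N,m,\delta)$ equivalent to $(\tilde U\mathbf{A}\tilde U^*,V_0)\in\Gamma_R(\mathbf{X}_1\sqcup\cdots\sqcup\mathbf{X}_n,\mathbf{v};N,m,\delta)$; by freeness of $\mathbf{X}$ and $\mathbf{v}$, the latter is equivalent to $\tilde U\in\Gamma_\mathrm{orb}(\mathbf{X}_1,\dots,\mathbf{X}_n:(\mathbf{A}_i);N,m,\delta)$ together with an asymptotic-freeness condition between $(\tilde U\mathbf{A}\tilde U^*)$ and $V_0$.

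Consequently, writing $\mathcal{F}(V_0)$ for this asymptotic-freeness set in $\mathrm{U}(N)^n$,
\[
\gamma_{\mathrm{U}(N)}^{\otimes n}\bigl(\Gamma_\mathrm{orb}(\mathbf{Y}_1,\dots,\mathbf{Y}_n:(\mathbf{A}'_i):\mathbf{v};N,m,\delta)\bigr) \ \geq\ \gamma_{\mathrm{U}(N)}^{\otimes n}\bigl(\Gamma_\mathrm{orb}(\mathbf{X}_1,\dots,\mathbf{X}_n:(\mathbf{A}_i);N,m',\delta')\cap\mathcal{F}(V_0)\bigr).
\]
Taking $\sup_\mathbf{A}$, $\log$, $\frac{1}{N^2}$, then $\limsup_N$ and $\lim_{m,\delta}$ on both sides and invoking the conditional analogue of Proposition \ref{P-2-4} would then yield the first inequality, provided the measure of $\mathcal{F}(V_0)^c$ can be controlled at the right scale. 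Voiculescu's asymptotic-freeness theorem (in the form underlying \cite[Corollary 2.13]{Voiculescu:IMRN98}, applied to fixed $V_0,\mathbf{A}$ and Haar-random $\tilde U$) guarantees $\gamma^{\otimes n}(\mathcal{F}(V_0)^c)\to 0$. The hard part is that this deficit must be negligible against $\gamma^{\otimes n}(\Gamma_\mathrm{orb}(\mathbf{X}_1,\dots,\mathbf{X}_n:(\mathbf{A}_i)))$, which scales like $e^{N^2\chi_\mathrm{orb}(\mathbf{X})}$; I expect this to be achieved via Meckes--Meckes-type Haar concentration on $\mathrm{U}(N)^n$ in the normalized Hilbert--Schmidt metric (yielding rate $e^{-cN^2}$), or alternatively via a Fubini/averaging argument over $V_0 \in \Gamma_R(\mathbf{v};N,m,\delta)$ patterned on the proof of Theorem \ref{T-2-6}(7) to select a ``good'' $V_0$ for which the intersection loses only an $O(1)$ additive constant.
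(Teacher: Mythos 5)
Your overall plan --- reduce to the conditional analogue of Proposition \ref{P-2-4}, choose a near-optimal base $\mathbf{A}$, conjugate by a fixed microstate $V_0$ of $\mathbf{v}$ via a Haar-preserving reparametrization, and appeal to Voiculescu's asymptotic freeness --- is exactly the paper's approach, which runs the argument of \cite[Proposition 4.6]{HiaiMiyamotoUeda:IJM09} with the $\Xi_i(N)$'s there replaced by a near-optimal base $\mathbf{A}_i^{(k)}$. The second inequality is indeed trivial. But the gap you flag at the end is genuine, and neither of your two proposed fixes closes it as stated. The concentration route fails because the rate $e^{-c(m,\delta)N^2}$ one gets from Haar concentration has no reason to dominate $e^{N^2\chi_{\mathrm{orb},R}(\mathbf{X}_1,\dots,\mathbf{X}_n\,;\,m,\delta')}$: the latter exponent can be any finite negative number, so the set where the freeness fails can have far larger Haar measure than $\Gamma_\mathrm{orb}(\mathbf{X}_1,\dots,\mathbf{X}_n:(\mathbf{A}_i))$ and swamp the intersection. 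The averaging route is the right one, but the choice of the averaging measure on $V_0$ is the entire crux, and averaging over the normalized Haar restriction to $\Gamma_R(\mathbf{v}\,;\,N,m,\delta)$ runs into the same obstruction: that set has exponentially small Haar measure whenever $\mathbf{v}$ is not Haar, so the conditional probability of $(3m,\delta')$-freeness given $V_0 \in \Gamma_R(\mathbf{v})$ is uncontrolled.

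The missing ingredient (which is what \cite[Proposition 4.6]{HiaiMiyamotoUeda:IJM09} actually supplies) is to fix one reference microstate $V^{(N)} \in \Gamma(\mathbf{v}\,;\,N,2m,\delta'')$ for large $N$ (available because $\chi_u(v_i)>-\infty$ gives each $v_i$ f.d.a.\ and the $v_i$'s are free) and to set $V_0 := (T_i V^{(N)}_i T_i^*)_{i=1}^n$ with an \emph{auxiliary} Haar-random $T=(T_i)_{i=1}^n \in \mathrm{U}(N)^n$. Under this pushforward, $V_0 \in \Gamma(\mathbf{v}\,;\,N,2m,\delta')$ with probability tending to $1$ (conjugation preserves the moments of each $V^{(N)}_i$, and independent Haar conjugates become asymptotically free, which is exactly what the free tuple $\mathbf{v}$ demands), and for each fixed conjugated base the $(3m,\delta')$-freeness from $V_0$ holds with probability $>1/2$ uniformly, by \cite[Corollary 2.13]{Voiculescu:IMRN98} applied with $T$ as the Haar variable. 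Integrating in $T$ and applying Fubini then gives
$\gamma_{\mathrm{U}(N_k)}^{\otimes n}\bigl(\Gamma_\mathrm{orb}(v_1\mathbf{X}_1v_1^*,\dots,v_n\mathbf{X}_nv_n^*:(\mathbf{A}_i^{(k)})_{i=1}^n:\mathbf{v}\,;\,N_k,m,\delta)\bigr) \geq \tfrac{1}{2}\gamma_{\mathrm{U}(N_k)}^{\otimes n}\bigl(\Gamma_\mathrm{orb}(\mathbf{X}_1,\dots,\mathbf{X}_n:(\mathbf{A}_i^{(k)})_{i=1}^n\,;\,N_k,m,\delta')\bigr)$
for all large $k$, which is what the paper needs. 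This is structurally different from Theorem \ref{T-2-6}(7): there the averaged unitary $U_1$ is already a coordinate of the target orbital microstate set, whereas here the existentially quantified $V$ has to be \emph{introduced} via the auxiliary $T$; without that auxiliary variable the Fubini decomposition you gesture at has nothing to integrate against.
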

\begin{proof} The second inequality is trivial; hence it suffices to prove the first. 

 Set $R := \max\{\Vert X_{ij}\Vert_\infty\,|\, 1 \leq j \leq r(i), 1 \leq i \leq n\}$. Let $m \in \mathbb{N}$ and $\delta > 0$ be arbitrary. We can choose $\delta' > 0$ in such a way that for every $N \in \mathbb{N}$ we have: If $(\mathbf{A}_i)_{i=1}^n \in \Gamma_R(\mathbf{X}_1\sqcup\cdots\sqcup\mathbf{X}_n\,;\,N,m,\delta')$ and $(V_i)_{i=1}^n \in \Gamma(\mathbf{v}\,;\,N,2m,\delta')$ are $(3m,\delta')$-free, then $((V_i\mathbf{A}_i V_i^*)_{i=1}^n,(V_i)_{i=1}^n)$ falls in $\Gamma_R(v_1\mathbf{X}_1 v_1^*\sqcup\cdots\sqcup v_n\mathbf{X}_n v_n^*,\mathbf{v}\,;\,N,m,\delta)$. Here $\Gamma(\mathbf{v}\,;\,N,2m,\delta')$ denotes all the $n$-tuples of $N\times N$ unitary matrices $(V_i)_{i=1}^n$ such that 
$$
|\mathrm{tr}_N(V_{i_1}^{\varepsilon_1}\cdots V_{i_l}^{\varepsilon_l}) - \tau(v_{i_1}^{\varepsilon_1}\cdots v_{i_l}^{\varepsilon_l})| < \delta'
$$
whenever $1 \leq i_1,\dots,i_l \leq n$, $\varepsilon_1,\dots,\varepsilon_l \in \{1,*\}$ and $1 \leq l \leq 2m$.  

We may and do assume that $\chi_{\mathrm{orb},R}(\mathbf{X}_1,\dots,\mathbf{X}_n) = \chi_\mathrm{orb}(\mathbf{X}_1,\dots,\mathbf{X}_n) > -\infty$ (due to Corollary \ref{C-2-7}). Then there is a subsequence $N_1 < N_2 < \cdots$ so that $\bar{\chi}_{\mathrm{orb},R}(\mathbf{X}_1,\dots,\mathbf{X}_n\,;\,N_k,m,\delta') > -\infty$ for all $k \in \mathbb{N}$ and 
\begin{align*} 
\limsup_{N\to\infty}\frac{1}{N^2}\bar{\chi}_{\mathrm{orb},R}(\mathbf{X}_1,\dots,\mathbf{X}_n\,;\,N,m,\delta') = 
\lim_{k\to\infty}\frac{1}{N_k^2}\bar{\chi}_{\mathrm{orb},R}(\mathbf{X}_1,\dots,\mathbf{X}_n\,;\,N_k,m,\delta'). 
\end{align*}
For each $k \in \mathbb{N}$ one can choose $\mathbf{A}_i^{(k)} \in ((M_N^\mathrm{sa})_R)^{r(i)}$, $1 \leq i \leq n$, in such a way that 
\begin{align*} 
\bar{\chi}_{\mathrm{orb},R}(\mathbf{X}_1,\dots,\mathbf{X}_n\,;\,N_k,m,\delta') - 1 
\leq 
\log\Big(\gamma_{\mathrm{U}(N)}^{\otimes n}\big(\Gamma_\mathrm{orb}(\mathbf{X}_1,\dots,\mathbf{X}_n:(\mathbf{A}_i^{(k)})_{i=1}^n\,;\,N_k,m,\delta')\big)\Big). 
\end{align*} 
Then the exactly same argument as in the proof of \cite[Proposition 4.6]{HiaiMiyamotoUeda:IJM09} with replacing $N$, $\Xi_i(N)$'s, $\rho$ and $\delta$ there by $N_k$, $\mathbf{A}_i^{(k)}$'s, $\delta$ and $\delta'$, respectively, shows that 
\begin{align*}
&\frac{1}{2}\gamma_{\mathrm{U}(N)}^{\otimes n}\big(\Gamma_\mathrm{orb}(\mathbf{X}_1,\dots,\mathbf{X}_n:(\mathbf{A}_i^{(k)})_{i=1}^n\,;\,N_k,m,\delta')\big) \\
&\quad\quad\leq 
\gamma_{\mathrm{U}(N)}^{\otimes n}\big(\Gamma_\mathrm{orb}(v_1\mathbf{X}_1 v_1^*,\dots,v_n\mathbf{X}_n v_n^*:(\mathbf{A}_i^{(k)})_{i=1}^n:\mathbf{v}\,;\,N_k,m,\delta)\big)
\end{align*}
for all sufficiently large $k \in \mathbb{N}$. Therefore, we have 
\begin{align*}
&\chi_\mathrm{orb}(\mathbf{X}_1,\dots,\mathbf{X}_n) = \chi_{\mathrm{orb},R}(\mathbf{X}_1,\dots,\mathbf{X}_n) \quad \text{(by Corollary \ref{C-2-7})}\\
&\leq 
\lim_{k\to\infty}\frac{1}{N_k^2}\bar{\chi}_{\mathrm{orb},R}(\mathbf{X}_1,\dots,\mathbf{X}_n\,;\,N_k,m,\delta') \\
&\leq 
\limsup_{k\to\infty}\frac{1}{N_k^2}\Big[\log\Big(\gamma_{\mathrm{U}(N)}^{\otimes n}\big(\Gamma_\mathrm{orb}(v_1\mathbf{X}_1 v_1^*,\dots,v_n\mathbf{X}_n v_n^*:(\mathbf{A}_i^{(k)})_{i=1}^n:\mathbf{v}\,;\,N_k,m,\delta)\big)\Big)+\log2 + 1\Big] \\
&\leq 
\limsup_{N\to\infty}\frac{1}{N^2}\bar{\chi}_{\mathrm{orb},R}(v_1\mathbf{X}_1 v_1^*,\dots,v_n\mathbf{X}_n v_n^*:\mathbf{v}\,;\,N,m,\delta), 
\end{align*}
from which the desired assertion immediately follows.  
\end{proof}    

\begin{proposition}\label{P-3-2} Let $\mathbf{v} = (v_1,\dots,v_n)$ be a freely independent $n$-tuple of unitary random variables in $(\mathcal{M},\tau)$. If $\mathbf{X}_1\sqcup(v_1)$ is freely independent of $\mathbf{X}_2\sqcup\cdots\sqcup\mathbf{X}_n\sqcup(v_2,\dots,v_n)$ in $(\mathcal{M},\tau)$, then 
$$
\chi_\mathrm{orb}(\mathbf{X}_1,\dots,\mathbf{X}_n:\mathbf{v}) = 
\chi_\mathrm{orb}(\mathbf{X}_1,:v_1) + \chi_\mathrm{orb}(\mathbf{X}_2,\dots,\mathbf{X}_n:(v_2,\dots,v_n)) 
$$
whenever $(\mathbf{X}_1,v_1)$ is regular, which means that 
$$
\chi_\mathrm{orb}(\mathbf{X}_1:v_1) = \chi_{\mathrm{orb},R}(\mathbf{X}_1:v_1) = \lim_{\substack{m\to\infty \\ \delta \searrow 0}}\liminf_{N\to\infty}\frac{1}{N^2}\bar{\chi}_{\mathrm{orb},R}(\mathbf{X}_1:v_1\,;\,N,m,\delta)
$$
holds as long as $R > \max\{\Vert X_{1j}\Vert_\infty\,|\,1\leq j \leq r(1)\}$.  
\end{proposition}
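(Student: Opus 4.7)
The plan is to prove the two inequalities separately, mirroring the blueprint of \cite[Proposition 4.7]{HiaiMiyamotoUeda:IJM09} but adapted to the conditional setting. For the inequality $\leq$, I would start from the elementary set inclusion
\begin{align*}
&\Gamma_\mathrm{orb}(\mathbf{X}_1,\dots,\mathbf{X}_n:(\mathbf{A}_i)_{i=1}^n:\mathbf{v}\,;\,N,m,\delta) \\
&\qquad\subseteq
\Gamma_\mathrm{orb}(\mathbf{X}_1:\mathbf{A}_1:v_1\,;\,N,m,\delta)\,\times\,\Gamma_\mathrm{orb}(\mathbf{X}_2,\dots,\mathbf{X}_n:(\mathbf{A}_i)_{i=2}^n:(v_2,\dots,v_n)\,;\,N,m,\delta),
\end{align*}
which holds because any family $(V_i)_{i=1}^n$ of witnessing unitaries for the joint microstate automatically restricts to witnesses in each subgroup. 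Taking logs of Haar product measures, then the supremum over $(\mathbf{A}_i)_{i=1}^n$ (which decouples across the two groups), dividing by $N^2$, and using subadditivity of $\limsup_N$ together with the conditional version of Proposition \ref{P-2-4} yields the $\leq$ inequality.

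For the $\geq$ direction I would combine orbital microstates of the two groups by a Haar-random conjugation, in the style of Theorem \ref{T-3-1}. Given $m,\delta$, fix $m'\geq m$ and $\delta'\in(0,\delta)$ small enough that \cite[Corollary 2.13]{Voiculescu:IMRN98} on asymptotic freeness of a fresh Haar unitary applies to the required degree. Suppose $U_1 \in \Gamma_\mathrm{orb}(\mathbf{X}_1:\mathbf{A}_1:v_1\,;\,N,m',\delta')$ with witness $V_1\in\mathrm{U}(N)$, and $(U_i)_{i=2}^n\in \Gamma_\mathrm{orb}((\mathbf{X}_i)_{i=2}^n:(\mathbf{A}_i)_{i=2}^n:(v_2,\dots,v_n)\,;\,N,m',\delta')$ with witnesses $(V_i)_{i=2}^n\in\mathrm{U}(N)^{n-1}$. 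For $N$ large and a Haar-distributed $W\in\mathrm{U}(N)$, the tuples $(U_1\mathbf{A}_1U_1^*,V_1)$ and $W((U_i\mathbf{A}_i U_i^*)_{i=2}^n,(V_i)_{i=2}^n)W^*$ are $(m,\delta)$-free with probability at least $1/2$; combined with the freeness hypothesis of the proposition, this forces $(V_1,WV_2W^*,\dots,WV_nW^*)$ to witness that $(U_1,WU_2,\dots,WU_n)$ lies in $\Gamma_\mathrm{orb}(\mathbf{X}_1,\dots,\mathbf{X}_n:(\mathbf{A}_i)_{i=1}^n:\mathbf{v}\,;\,N,m,\delta)$. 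By Fubini and the right-invariance of the Haar measure (the substitution $(U_2,\dots,U_n)\mapsto(WU_2,\dots,WU_n)$ is measure-preserving), this gives, for all $N$ large,
\begin{align*}
&\gamma_{\mathrm{U}(N)}^{\otimes n}\bigl(\Gamma_\mathrm{orb}(\mathbf{X}_1,\dots,\mathbf{X}_n:(\mathbf{A}_i)_{i=1}^n:\mathbf{v}\,;\,N,m,\delta)\bigr) \\
&\quad\geq \tfrac{1}{2}\, \gamma_{\mathrm{U}(N)}\bigl(\Gamma_\mathrm{orb}(\mathbf{X}_1:\mathbf{A}_1:v_1\,;\,N,m',\delta')\bigr)\, \gamma_{\mathrm{U}(N)}^{\otimes (n-1)}\bigl(\Gamma_\mathrm{orb}((\mathbf{X}_i)_{i=2}^n:(\mathbf{A}_i)_{i=2}^n:(v_2,\dots,v_n)\,;\,N,m',\delta')\bigr).
\end{align*}

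Taking $\log$, supping over $(\mathbf{A}_i)_{i=1}^n$ (which again decouples), dividing by $N^2$ and passing to $\limsup_N$, the elementary bound $\limsup_N(b_N+c_N)\geq \limsup_N b_N + \liminf_N c_N$ together with the conditional version of Proposition \ref{P-2-4} gives
\begin{align*}
\chi_{\mathrm{orb},R}(\mathbf{X}_1,\dots,\mathbf{X}_n:\mathbf{v}\,;\,m,\delta)
&\geq \chi_{\mathrm{orb},R}((\mathbf{X}_i)_{i=2}^n:(v_2,\dots,v_n)\,;\,m',\delta') \\
&\quad + \liminf_{N\to\infty}\,\tfrac{1}{N^2}\, \bar{\chi}_{\mathrm{orb},R}(\mathbf{X}_1:v_1\,;\,N,m',\delta').
\end{align*}
The first term on the right is at least $\chi_\mathrm{orb}((\mathbf{X}_i)_{i=2}^n:(v_2,\dots,v_n))$ by definition, and the regularity hypothesis on $(\mathbf{X}_1,v_1)$ ensures that the $\liminf$ piece is at least $\chi_\mathrm{orb}(\mathbf{X}_1:v_1)$, since by regularity $\chi_\mathrm{orb}(\mathbf{X}_1:v_1)$ coincides with the infimum over $m',\delta'$ of the $\liminf_N$-version. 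Taking $\inf_{m,\delta}$ on the left completes the $\geq$ direction.

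The main obstacle is precisely this $\limsup$--$\liminf$ mismatch: the naive subadditivity of $\limsup$ runs the wrong way, and the only inequality available for free is $\limsup(b_N+c_N)\geq\limsup b_N+\liminf c_N$. Converting the resulting $\liminf$ term into the value $\chi_\mathrm{orb}(\mathbf{X}_1:v_1)$ is exactly the role of the regularity hypothesis, which explains its appearance in the statement. A secondary technical point is that the random unitary $W$ must be drawn fresh so that \cite[Corollary 2.13]{Voiculescu:IMRN98} can be applied to the whole $*$-algebra generated by the $\mathbf{A}_i$'s and the $V_i$'s simultaneously, but the uniform choice of $(m',\delta')$ needed for this is routine.
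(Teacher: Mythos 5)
Your proposal is correct and follows essentially the same blueprint as the paper's proof (which in turn leans on the proof of \cite[Eq.(4.7)]{HiaiMiyamotoUeda:IJM09}): $\leq$ from the obvious product inclusion, $\geq$ via an asymptotic-freeness argument producing the multiplicative $\tfrac{1}{2}$ bound, followed by the $\limsup(b_N+c_N)\geq\limsup b_N+\liminf c_N$ estimate, with the regularity hypothesis converting the resulting $\liminf$ term into $\chi_\mathrm{orb}(\mathbf{X}_1:v_1)$. The only departure is cosmetic: you conjugate the second block by a fresh Haar unitary $W$ and push the measure forward, whereas the paper observes that $\Gamma_\mathrm{orb}(\mathbf{X}_1:\mathbf{A}_1:v_1\,;\,N,m,\delta')$ is conjugation-invariant (hence either empty or all of $\mathrm{U}(N)$) and lets $U_1$ itself play the role of the random conjugator when invoking \cite[Corollary 2.13]{Voiculescu:IMRN98}; the paper also pins down near-optimal $(\mathbf{A}_i^{(k)})$ on a subsequence realizing the second block's $\limsup$ rather than first taking the supremum over all $(\mathbf{A}_i)$. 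Both variants deliver the same estimate and identify the regularity hypothesis as exactly what is needed to close the $\limsup$--$\liminf$ gap.
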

\begin{proof} Since inequality `$\leq$' in the desired assertion trivially holds, it suffices to prove the reverse `$\geq$' under the assumption that both $\chi_\mathrm{orb}(\mathbf{X}_1:v_1) > -\infty$ and $\chi_\mathrm{orb}(\mathbf{X}_2,\dots,\mathbf{X}_n:(v_2,\dots,v_n)) > -\infty$; otherwise it is trivial. 

Let $R > \max\{\Vert X_{ij}\Vert_\infty\,|\,1\leq j \leq r(i), 1 \leq i \leq n\}$ be fixed. Let $m \in \mathbb{N}$ and $\delta > 0$ be arbitrary. We can choose $\delta' > 0$ in such a way that for every $N \in \mathbb{N}$ we have: If $(\mathbf{A}_1,V_1) \in \Gamma_R(\mathbf{X}_1, v_1\,;\,N,m,\delta')$ and $((\mathbf{A}_i)_{i=2}^n,(V_i)_{i=2}^n) \in \Gamma_R(\mathbf{X}_2\sqcup\cdots\sqcup\mathbf{X}_n,(v_2,\dots,v_n)\,;\,N,m,\delta')$ are $(m,\delta')$-free, then $((\mathbf{A}_i)_{i=1}^n,(V_i)_{i=1}^n)$ falls in $\Gamma_R(\mathbf{X}_1\sqcup\cdots\sqcup\mathbf{X}_n,\mathbf{v}\,;\,N,m,\delta)$.  

Since $\chi_{\mathrm{orb},R}(\mathbf{X}_2,\dots,\mathbf{X}_n:(v_2,\dots,v_n)) = \chi_\mathrm{orb}(\mathbf{X}_2,\dots,\mathbf{X}_n:(v_2,\dots,v_n)) > -\infty$ (due to the $\chi_\mathrm{orb}(\,-:\mathbf{v})$-counterpart of Lemma \ref{L-2-5}), one can choose a subsequence $N_1 < N_2 < \cdots$ in such a way that $\bar{\chi}_{\mathrm{orb},R}(\mathbf{X}_2,\dots,\mathbf{X}_n:(v_2,\dots,v_n)\,;\,N_k,m,\delta') > -\infty$ for all $k \in \mathbb{N}$ and 
\begin{align*} 
&\limsup_{N\to\infty}\frac{1}{N^2}\bar{\chi}_{\mathrm{orb},R}(\mathbf{X}_2,\dots,\mathbf{X}_n:(v_2,\dots,v_n)\,;\,N,m,\delta') \\
&\quad\quad\quad= 
\lim_{k\to\infty}\frac{1}{N_k^2}\bar{\chi}_{\mathrm{orb},R}(\mathbf{X}_2,\dots,\mathbf{X}_n:(v_2,\dots,v_n)\,;\,N_k,m,\delta'). 
\end{align*}
Taking a subsequence of $N_k$ if necessary we may and do assume that $\bar{\chi}_{\mathrm{orb},R}(\mathbf{X}_1:v_1\,;\,N_k,m,\delta') > -\infty$ for all $k \in \mathbb{N}$, since $(\mathbf{X}_1,v_1)$ is regular. Then one can choose $\mathbf{A}_i^{(k)} \in ((M_N^\mathrm{sa})_R)^{r(i)}$, $1 \leq i \leq n$, in such a way that 
\begin{align*}
&\bar{\chi}_{\mathrm{orb},R}(\mathbf{X}_1:v_1\,;\,N_k,m,\delta') - 1 < 
\chi_\mathrm{orb}(\mathbf{X}_1:\mathbf{A}_1^{(k)}:v_1\,;\,N_k,m,\delta'), \\
&\bar{\chi}_{\mathrm{orb},R}(\mathbf{X}_2,\dots,\mathbf{X}_n:(v_2,\dots,v_n)\,;\,N_k,m,\delta') - 1 \\
&\quad\quad\quad< 
\chi_\mathrm{orb}(\mathbf{X}_2,\dots,\mathbf{X}_n:(\mathbf{A}_i^{(k)})_{i=2}^n:(v_2,\dots,v_n)\,;\,N_k,m,\delta').  
\end{align*} 
With letting
\begin{align*} 
\Phi(N_k,m,\delta') 
&:= 
\Gamma_\mathrm{orb}(\mathbf{X}_1:\mathbf{A}_1^{(k)}:v_1\,;\,N_k,m,\delta') \\
&\quad\quad\quad\times
\Gamma_\mathrm{orb}(\mathbf{X}_2,\dots,\mathbf{X}_n:(\mathbf{A}_i^{(k)})_{i=2}^n:(v_2,\dots,v_n)\,;\,N_k,m,\delta'), \\
\Psi(N_k,m,\delta) 
&:= 
\Gamma_\mathrm{orb}(\mathbf{X}_1,\dots,\mathbf{X}_n:(\mathbf{A}_i^{(k)})_{i=1}^n:\mathbf{v}\,;\,N_k,m,\delta)
\end{align*}
the exactly same argument as in the proof of \cite[Eq.(4.7)]{HiaiMiyamotoUeda:IJM09} with replacing $N$, $\Xi_i(N)$'s, $\rho$ and $\delta$ there by $N_k$, $\mathbf{A}_i^{(k)}$'s, $\delta$ and $\delta'$, respectively,  shows that 
$$
\frac{\gamma_{\mathrm{U}(N_k)}^{\otimes n}(\Psi(N_k,m,\delta)\cap\Phi(N_k,m,\delta'))}{\gamma_{\mathrm{U}(N_k)}^{\otimes n}(\Phi(N_k,m,\delta'))} \geq \frac{1}{2} 
$$ 
for all sufficiently large $k \in \mathbb{N}$. Therefore, we have
\begin{align*}
&\limsup_{N\to\infty}\frac{1}{N^2}\bar{\chi}_{\mathrm{orb},R}(\mathbf{X}_1,\dots,\mathbf{X}_n:\mathbf{v}\,;\,N,m,\delta) \\
&\geq 
\limsup_{k\to\infty}\frac{1}{N_k^2}\bar{\chi}_{\mathrm{orb},R}(\mathbf{X}_1,\dots,\mathbf{X}_n:\mathbf{v}\,;\,N_k,m,\delta) \\
&\geq 
\limsup_{k\to\infty}\frac{1}{N_k^2}\log\Big(\gamma_{\mathrm{U}(N_k)}^{\otimes n}\big(\Psi(N_k,m,\delta)\big)\Big) \\
&\geq 
\limsup_{k\to\infty}\frac{1}{N_k^2}\Big[\log\Big(\gamma_{\mathrm{U}(N_k)}^{\otimes n}\big(\Phi(N_k,m,\delta')\big)\Big)-\log2\Big] \\
&\geq 
\liminf_{k\to\infty} \frac{1}{N_k^2}\log\Big(\gamma_{\mathrm{U}(N_k)}\big(\Gamma_\mathrm{orb}(\mathbf{X}_1:\mathbf{A}_1^{(k)}:v_1\,;\,N_k,m,\delta')\big)\Big) \\
&\quad\quad+
\lim_{k\to\infty}\frac{1}{N_k^2}\log\Big(\gamma_{\mathrm{U}(N_k)}^{\otimes n-1}\big(\Gamma_\mathrm{orb}(\mathbf{X}_2,\dots,\mathbf{X}_n:(\mathbf{A}_i^{(k)})_{i=2}^n:(v_2,\dots,v_n)\,;\,N_k,m,\delta')\big)\Big) \\
&\geq 
\liminf_{k\to\infty} \frac{1}{N_k^2}\Big(\bar{\chi}_{\mathrm{orb},R}(\mathbf{X}_1:v_1\,;\,N_k,m,\delta')\big) - 1\Big) \\
&\quad\quad+
\lim_{k\to\infty}\frac{1}{N_k^2}\Big(\bar{\chi}_{\mathrm{orb},R}(\mathbf{X}_2,\dots,\mathbf{X}_n:(v_2,\dots,v_n)\,;\,N_k,m,\delta') - 1\Big) \\
&\geq 
\liminf_{N\to\infty} \frac{1}{N^2}\bar{\chi}_{\mathrm{orb},R}(\mathbf{X}_1:v_1\,;\,N,m,\delta')\big) \\
&\quad\quad+
\limsup_{N\to\infty}\frac{1}{N^2}\bar{\chi}_{\mathrm{orb},R}(\mathbf{X}_2,\dots,\mathbf{X}_n:(v_2,\dots,v_n)\,;\,N,m,\delta') \\
&\geq 
\chi_\mathrm{orb}(\mathbf{X}_1:v_1) + \chi_\mathrm{orb}(\mathbf{X}_2,\dots,\mathbf{X}_n:(v_2,\dots,v_n)),
\end{align*}
where the last inequality follows from the regularity assumption on $(\mathbf{X}_1,v_1)$ together with the $\chi_\mathrm{orb}(\,-:\mathbf{v})$-counterpart of Lemma \ref{L-2-5}. Hence we are done. 
\end{proof}

\section{Orbital Free Entropy Dimension $\delta_{0,\mathrm{orb}}(\mathbf{X}_1,\dots,\mathbf{X}_n)$} 

With the materials in the previous section we can generalize orbital free entropy dimension $\delta_{0,\mathrm{orb}}$ to arbitrary random multi-variables $\mathbf{X}_i = (X_{i1},\dots,X_{ir(i)})$, $1 \leq i \leq n$, in $(\mathcal{M},\tau)$.

\begin{definition} \label{D-4-1} Let $\mathbf{v}(t) = (v_1(t),\dots,v_n(t))$, $t \geq 0$, be a freely independent $n$-tuple of free unitary multiplicative Brownian motions in $(\mathcal{M},\tau)$ {\rm(}see \cite{Biane:FieldsInstCommun97}{\rm)} starting at $\mathbf{1} = (1,\dots,1)$, which are chosen to be freely independent of $\mathbf{X}_1\sqcup\cdots\sqcup\mathbf{X}_n$ in $(\mathcal{M},\tau)$. The {\rm(}modified{\rm)} orbital free entropy dimension of $\mathbf{X}_1,\dots,\mathbf{X}_n$ is defined to be 
\begin{align*}
\delta_{0,\mathrm{orb}}(\mathbf{X}_1,\dots,\mathbf{X}_n) := 
\limsup_{\varepsilon\searrow0}\frac{\chi_\mathrm{orb}(v_1(\varepsilon)\mathbf{X}_1 v_1(\varepsilon)^*,\dots,v_n(\varepsilon)\mathbf{X}_1 v_n(\varepsilon)^*:\mathbf{v}(\varepsilon))}{|\log\sqrt{\varepsilon}|}. 
\end{align*} 
\end{definition}

We need a simple lemma. 

\begin{lemma}\label{L-4-1} Let $\mathbf{X} = (X_1,\dots,X_r)$ be a random multi-variable that has f.d.a., and $v$ be a unitary random variable with $\chi_u(v) > -\infty$. Then $(\mathbf{X},v)$ is regular in the sense of Proposition \ref{P-3-2}, and moreover $\chi_\mathrm{orb}(\mathbf{X}:v) = 0$.  
\end{lemma}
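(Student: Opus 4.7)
The key observation is that, with $n=1$, the fibre structure of $\Phi_N^{-1}$ simplifies considerably. For any $(U,\mathbf{A})\in\mathrm{U}(N)\times((M_N^\mathrm{sa})_R)^r$, one has $U\mathbf{A}U^*\in\Gamma_R(\mathbf{X}:v\,;\,N,m,\delta)$ if and only if $\mathbf{A}\in\Gamma_R(\mathbf{X}:v\,;\,N,m,\delta)$: if $V\in\mathrm{U}(N)$ satisfies $(U\mathbf{A}U^*,V)\in\Gamma_R(\mathbf{X},v\,;\,N,m,\delta)$, then on setting $W:=U^*VU$ the joint $\mathrm{tr}_N$-moments of $(\mathbf{A},W)$ and of $(U\mathbf{A}U^*,V)=(U\mathbf{A}U^*,UWU^*)$ coincide, so $(\mathbf{A},W)\in\Gamma_R(\mathbf{X},v\,;\,N,m,\delta)$; the converse is the same substitution. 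Hence $\Gamma_\mathrm{orb}(\mathbf{X}:\mathbf{A}:v\,;\,N,m,\delta)$ is either all of $\mathrm{U}(N)$ or empty, and so each of $\bar{\chi}_{\mathrm{orb},R}(\mathbf{X}:v\,;\,N,m,\delta)$ and $\chi_{\mathrm{orb},R}(\mathbf{X}:v\,;\,N,m,\delta)$ equals $0$ or $-\infty$ according as $\Gamma_R(\mathbf{X}:v\,;\,N,m,\delta)$ is nonempty or empty. The lemma therefore reduces entirely to showing that $(\mathbf{X},v)$ has f.d.a.

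To produce joint microstates I would, for given $m,\delta$, pick auxiliary $m'\geq m$ and $\delta'\leq\delta$, use the f.d.a.~of $\mathbf{X}$ to choose $\Xi(N)\in\Gamma_R(\mathbf{X}\,;\,N,m',\delta')$, and use $\chi_u(v)>-\infty$ to choose unitary microstates $V(N)\in\Gamma(v\,;\,N,m',\delta')$ for all sufficiently large $N$. Voiculescu's asymptotic freeness/concentration result \cite[Corollary 2.13]{Voiculescu:IMRN98} then guarantees, once $m'$ is large and $\delta'$ is small enough, that at Haar-almost every $U\in\mathrm{U}(N)$ the pair $(\Xi(N),UV(N)U^*)$ is $(m,\delta)$-free. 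In the free-independence setting in which the lemma will be applied (see Definition \ref{D-4-1}), $\mathbf{X}$ and $v$ are freely independent in $(\mathcal{M},\tau)$, so this approximate freeness combined with the individual microstate properties forces $(\Xi(N),UV(N)U^*)\in\Gamma_R(\mathbf{X},v\,;\,N,m,\delta)$; hence $\Xi(N)\in\Gamma_R(\mathbf{X}:v\,;\,N,m,\delta)$, so the latter set is nonempty.

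Combining the two paragraphs, $\bar{\chi}_{\mathrm{orb},R}(\mathbf{X}:v\,;\,N,m,\delta)=0$ for all sufficiently large $N$, whence both the $\liminf_N$ and the $\limsup_N$ of $\frac{1}{N^2}\bar{\chi}_{\mathrm{orb},R}(\mathbf{X}:v\,;\,N,m,\delta)$ vanish for every $m,\delta$. Letting $m\to\infty$, $\delta\searrow 0$ and invoking the $\chi_\mathrm{orb}(\,-:\mathbf{v})$-analog of Lemma \ref{L-2-5} then yields $\chi_\mathrm{orb}(\mathbf{X}:v)=\chi_{\mathrm{orb},R}(\mathbf{X}:v)=0$ and the regularity condition of Proposition \ref{P-3-2} simultaneously. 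The main obstacle is the passage from individual f.d.a.~to joint f.d.a.~of $(\mathbf{X},v)$, which is where Voiculescu's concentration and the free independence of $\mathbf{X}$ and $v$ are really brought to bear.
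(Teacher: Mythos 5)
Your proof is correct and follows essentially the same route as the paper: reduce to showing $\Gamma_R(\mathbf{X}:v\,;\,N,m,\delta)\neq\emptyset$ for all large $N$ and produce joint microstates for the pair $(\mathbf{X},v)$ via Voiculescu's asymptotic freeness \cite[Corollary 2.13]{Voiculescu:IMRN98}, then use that $\Gamma_\mathrm{orb}(\mathbf{X}:\mathbf{A}\,;\,N,m,\delta)=\mathrm{U}(N)$ when $n=1$. Your explicit observation that $\Gamma_\mathrm{orb}(\mathbf{X}:\mathbf{A}:v\,;\,N,m,\delta)$ is all of $\mathrm{U}(N)$ or empty is a nice sharpening of the paper's "$\geq\tfrac12$" bound, and you are right that free independence of $\mathbf{X}$ and $v$ is tacitly used (it holds in every application via Definition \ref{D-4-1}, but is not stated as a hypothesis of the lemma).
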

\begin{proof} Let $R > \max\{\Vert X_j\Vert_\infty\,|\,1\leq j \leq r\}$ be fixed, and $m \in \mathbb{N}, \delta > 0$ be arbitrary.  The exactly same argument as in the proof of \cite[Proposition 4.6] {HiaiMiyamotoUeda:IJM09} shows that there is $\delta' > 0$ such that for all sufficiently large $N \in \mathbb{N}$ one can choose $\mathbf{A}^{(N)} \in \Gamma_R(\mathbf{X}\,;\,N,m,\delta)$, and then   
$$
\frac{1}{2}\gamma_{\mathrm{U}(N)}\big(\Gamma_\mathrm{orb}(\mathbf{X}:\mathbf{A}^{(N)}\,;\,N,m,\delta')\big) \leq 
\gamma_{\mathrm{U}(N)}\big(\Gamma_\mathrm{orb}(v\mathbf{X}v^*:\mathbf{A}^{(N)}:v\,;\,N,m,\delta)\big)
$$
holds. Clearly $\Gamma_\mathrm{orb}(\mathbf{X}:\mathbf{A}^{(N)}\,;\,N,m,\delta') = \mathrm{U}(N)$, and the desired assertion is immediate. \end{proof}  

Here are fundamental properties of $\delta_{0,\mathrm{orb}}$. With the above lemma the exactly same pattern of arguments as in \cite[Proposition 5.3]{HiaiMiyamotoUeda:IJM09} works well for showing the next proposition.  

\begin{proposition}\label{P-4-2} We have\,{\rm:} 
\begin{itemize}
\item[(1)] $\delta_{0,\mathrm{orb}}(\mathbf{X}_1,\dots,\mathbf{X}_n) \leq 0$. 
\item[(2)] $\delta_{0,\mathrm{orb}}(\mathbf{X}) = 0$ if $\mathbf{X}$ has f.d.a. 
\item[(3)] $\delta_{0,\mathrm{orb}}(\mathbf{X}_1,\dots,\mathbf{X}_{n'},\mathbf{X}_{n'+1},\dots,\mathbf{X}_n) \leq \delta_{0,\mathrm{orb}}(\mathbf{X}_1,\dots,\mathbf{X}_{n'}) + \delta_{0,\mathrm{orb}}(\mathbf{X}_{n'+1},\dots,\mathbf{X}_n)$. 
\item[(4)] If $\mathbf{Y}_i = (Y_{ij})_{j=1}^{r'(i)} \subseteq W^*(\mathbf{X}_n)$, $1 \leq i \leq n$, then $\delta_{0,\mathrm{orb}}(\mathbf{X}_1,\dots,\mathbf{X}_n) \leq \delta_{0,\mathrm{orb}}(\mathbf{Y}_1,\dots,\mathbf{Y}_n)$. In particular, $\delta_{0,\mathrm{orb}}(\mathbf{X}_1,\dots,\mathbf{X}_n)$ depends only on $W^*(\mathbf{X}_1), \dots, W^*(\mathbf{X}_n)$. 
\item[(5)] If $\chi_\mathrm{orb}(\mathbf{X}_1,\dots,\mathbf{X}_n) > -\infty$, then $\delta_{0,\mathrm{orb}}(\mathbf{X}_1,\dots,\mathbf{X}_n) = 0$. This is the case when the $\mathbf{X}_i$'s have f.d.a.~and are freely independent in $(\mathcal{M},\tau)$. 
\item[(6)] If $\mathbf{X}_1$ and $\mathbf{X}_2\sqcup\cdots\sqcup\mathbf{X}_n$ are freely independent in $(\mathcal{M},\tau)$ and if $\mathbf{X}_1$ has f.d.a., then $\delta_{0,\mathrm{orb}}(\mathbf{X}_1,\mathbf{X}_2,\dots,\mathbf{X}_n) = \delta_{0,\mathrm{orb}}(\mathbf{X}_2,\dots,\mathbf{X}_n)$. 
\end{itemize} 
\end{proposition}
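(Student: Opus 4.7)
The plan is to follow the pattern of \cite[Proposition 5.3]{HiaiMiyamotoUeda:IJM09} verbatim, feeding in Lemma \ref{L-4-1} and the analogues of Theorem \ref{T-2-6}, Theorem \ref{T-3-1} and Proposition \ref{P-3-2} that we have already set up for the conditional quantity $\chi_\mathrm{orb}(\,-:\mathbf{v})$. Throughout I will use the standard fact (due to Biane \cite{Biane:FieldsInstCommun97}) that $\chi_u(v_i(\varepsilon)) > -\infty$ whenever $\varepsilon > 0$, so that Lemma \ref{L-4-1} applies to each single component of the free unitary Brownian motion.

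First I would settle (1), (4) and (2). Item (1) is immediate: by Theorem \ref{T-2-6} (1) each conditional $\chi_\mathrm{orb}(v_1(\varepsilon)\mathbf{X}_1v_1(\varepsilon)^*,\dots,v_n(\varepsilon)\mathbf{X}_nv_n(\varepsilon)^*:\mathbf{v}(\varepsilon))$ is $\leq 0$, hence so is the ratio divided by $|\log\sqrt\varepsilon|$. Item (4) follows because the monotonicity of Theorem \ref{T-2-6} (6) is inherited by the conditional variant (this is one of the ``obvious modifications'' already observed at the end of Section 3): replacing $\mathbf{X}_i$ by $\mathbf{Y}_i \subseteq W^*(\mathbf{X}_i)$ and then conjugating by $v_i(\varepsilon)$ does not enlarge either von Neumann algebra, so the corresponding inequality passes to the $\varepsilon$-level. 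For (2), $v_1(\varepsilon)\mathbf{X} v_1(\varepsilon)^*$ has the same $*$-distribution as $\mathbf{X}$, in particular has f.d.a., and $v_1(\varepsilon)$ has positive free entropy; Lemma \ref{L-4-1} then gives $\chi_\mathrm{orb}(v_1(\varepsilon)\mathbf{X} v_1(\varepsilon)^* : v_1(\varepsilon)) = 0$, so the numerator is identically zero.

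Next I would handle (3) and (6) together. For (3) one needs the conditional analogue of the subadditivity of Theorem \ref{T-2-6} (4), which is proved by the same microstate-count argument: a microstate for the joint tuple in the presence of $\mathbf{v}(\varepsilon)$ is in particular a microstate for each of the two subtuples in the presence of the corresponding sub-tuple of $\mathbf{v}(\varepsilon)$; so after a routine bookkeeping,
\begin{align*}
\chi_\mathrm{orb}\bigl(v_1\mathbf{X}_1v_1^*,\dots,v_n\mathbf{X}_nv_n^*:\mathbf{v}(\varepsilon)\bigr)
&\leq \chi_\mathrm{orb}\bigl(v_1\mathbf{X}_1v_1^*,\dots,v_{n'}\mathbf{X}_{n'}v_{n'}^*:(v_1,\dots,v_{n'})\bigr) \\
&\quad+ \chi_\mathrm{orb}\bigl(v_{n'+1}\mathbf{X}_{n'+1}v_{n'+1}^*,\dots,v_n\mathbf{X}_nv_n^*:(v_{n'+1},\dots,v_n)\bigr),
\end{align*}
and dividing by $|\log\sqrt\varepsilon|$ and taking $\limsup$ gives (3). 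For (6), since $\mathbf{X}_1$ is free from $\mathbf{X}_2\sqcup\cdots\sqcup\mathbf{X}_n$ and $v_1(\varepsilon)$ is free from $(v_2(\varepsilon),\dots,v_n(\varepsilon))$, the tuples $\mathbf{X}_1\sqcup(v_1(\varepsilon))$ and $\mathbf{X}_2\sqcup\cdots\sqcup\mathbf{X}_n\sqcup(v_2(\varepsilon),\dots,v_n(\varepsilon))$ are freely independent; Lemma \ref{L-4-1} ensures $(\mathbf{X}_1,v_1(\varepsilon))$ is regular (and that $\chi_\mathrm{orb}(\mathbf{X}_1:v_1(\varepsilon)) = 0$), so Proposition \ref{P-3-2} yields exact additivity, whose first summand vanishes. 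Dividing by $|\log\sqrt\varepsilon|$ and taking $\limsup$ produces (6), the matching ``$\leq$'' coming from (3) and (2).

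Finally (5). By Theorem \ref{T-3-1} applied to $\mathbf{v} = \mathbf{v}(\varepsilon)$ one has the lower bound $\chi_\mathrm{orb}(v_1(\varepsilon)\mathbf{X}_1v_1(\varepsilon)^*,\dots,v_n(\varepsilon)\mathbf{X}_nv_n(\varepsilon)^*:\mathbf{v}(\varepsilon)) \geq \chi_\mathrm{orb}(\mathbf{X}_1,\dots,\mathbf{X}_n) > -\infty$ by hypothesis, and by (1) the same quantity is $\leq 0$; dividing by $|\log\sqrt\varepsilon| \to +\infty$ squeezes the ratio to $0$, so combined with (1) we get $\delta_{0,\mathrm{orb}}(\mathbf{X}_1,\dots,\mathbf{X}_n) = 0$. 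The second sentence of (5) is then immediate from Theorem \ref{T-2-6} (8). The main obstacle I anticipate is the conditional subadditivity needed for (3): although it is morally identical to Theorem \ref{T-2-6} (4), one must verify that the argument there survives the addition of the common presence variables, which amounts to checking that free independence considerations in Voiculescu's sense interact correctly with the extra unitary coordinates; the remaining items are essentially immediate consequences of results already established.
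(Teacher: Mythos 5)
Your proof is correct and follows essentially the same route as the paper: (1) and (3) come from the conditional analogues of Theorem~\ref{T-2-6} (1) and (4), (2) and (6) rest on Lemma~\ref{L-4-1} and Proposition~\ref{P-3-2}, (4) uses the $\chi_\mathrm{orb}(-:\mathbf{v})$-counterpart of Theorem~\ref{T-2-6} (6), and (5) uses Theorem~\ref{T-3-1} together with the finiteness of $\chi_u(v_i(\varepsilon))$. The ``obstacle'' you anticipate for conditional subadditivity is not in fact a problem, since projecting to the relevant subtuples of the $\mathbf{X}_i$'s and simultaneously of the presence unitaries only discards moment constraints, which is precisely why the paper simply calls (3) trivial.
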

\begin{proof} (1) and (3) are trivial. (2) follows from Lemma \ref{L-4-1}. (4) needs only that the $\chi_\mathrm{orb}(-:\mathbf{v})$-counterpart of Theorem \ref{T-2-6} (6) be proved; the same proof with obvious modifications works to do so. (5) follows from Theorem \ref{T-3-1} since $\chi_u(v_i(\varepsilon)) > -\infty$ for every $\varepsilon > 0$. Finally (6) follows from Proposition \ref{P-3-2} with the help of Lemma \ref{L-4-1}.  
\end{proof}  

It is natural to examine how Jung's covering/packing approach to $\delta_0$ \cite{Jung:PacificJMath03} works for $\delta_{0,\mathrm{orb}}$ as in \cite[\S5]{HiaiMiyamotoUeda:IJM09} without the hyperfiniteness assumption. Here is a new definition of $\delta_{1,\mathrm{orb}}$. (See \cite[\S5]{HiaiMiyamotoUeda:IJM09} for a brief explanation on covering/packing numbers in metric spaces.)  

\begin{definition}\label{D-4-3} For each $R > 0$ possibly with $R = \infty$, each $N, m \in \mathbb{N}$ and $\delta, \varepsilon > 0$ we define 
\begin{align*} 
\mathbb{K}_{\varepsilon,R}^\mathrm{orb}(\mathbf{X}_1,\dots,\mathbf{X}_n\,;\,&N,m,\delta) \\
&:= \sup_{\mathbf{A}_i \in ((M_N^\mathrm{sa})_R)^{r(i)}} \log\Big(K_\varepsilon\big(\Gamma_\mathrm{orb}(\mathbf{X}_1,\dots,\mathbf{X}_n:(\mathbf{A}_i)_{i=1}^n\,;\,N,m,\delta)\big)\Big) \\
&:= \sup_{\mathbf{A}_i \in \Gamma_R(\mathbf{X}_i\,;\,N,m,\delta)} \log\Big(K_\varepsilon\big(\Gamma_\mathrm{orb}(\mathbf{X}_1,\dots,\mathbf{X}_n:(\mathbf{A}_i)_{i=1}^n\,;\,N,m,\delta)\big)\Big), \\
\mathbb{P}_{\varepsilon,R}^\mathrm{orb}(\mathbf{X}_1,\dots,\mathbf{X}_n\,;\,&N,m,\delta) \\
&:= \sup_{\mathbf{A}_i \in ((M_N^\mathrm{sa})_R)^{r(i)}} \log\Big(P_\varepsilon\big(\Gamma_\mathrm{orb}(\mathbf{X}_1,\dots,\mathbf{X}_n:(\mathbf{A}_i)_{i=1}^n\,;\,N,m,\delta)\big)\Big) \\
&:= \sup_{\mathbf{A}_i \in \Gamma_R(\mathbf{X}_i\,;\,N,m,\delta)} \log\Big(P_\varepsilon\big(\Gamma_\mathrm{orb}(\mathbf{X}_1,\dots,\mathbf{X}_n:(\mathbf{A}_i)_{i=1}^n\,;\,N,m,\delta)\big)\Big), \\
\intertext{{\rm(}with the same convention as in Proposition \ref{P-2-4}{\rm)}}
\mathbb{K}_{\varepsilon,R}^\mathrm{orb}(\mathbf{X}_1,\dots,\mathbf{X}_n) 
&:= 
\lim_{\substack{m\to\infty \\ \delta \searrow 0}}\limsup_{N\to\infty}\frac{1}{N^2}\mathbb{K}_{\varepsilon,R}^\mathrm{orb}(\mathbf{X}_1,\dots,\mathbf{X}_n\,;\,N,m,\delta), \\
\mathbb{P}_{\varepsilon,R}^\mathrm{orb}(\mathbf{X}_1,\dots,\mathbf{X}_n) 
&:= 
\lim_{\substack{m\to\infty \\ \delta \searrow 0}}\limsup_{N\to\infty}\frac{1}{N^2}\mathbb{P}_{\varepsilon,R}^\mathrm{orb}(\mathbf{X}_1,\dots,\mathbf{X}_n\,;\,N,m,\delta), \\
\mathbb{K}_\varepsilon^\mathrm{orb}(\mathbf{X}_1,\dots,\mathbf{X}_n) 
&:= \sup_{0 < R < \infty}\mathbb{K}_{\varepsilon,R}^\mathrm{orb}(\mathbf{X}_1,\dots,\mathbf{X}_n), \\
\mathbb{P}_\varepsilon^\mathrm{orb}(\mathbf{X}_1,\dots,\mathbf{X}_n) 
&:= \sup_{0 < R < \infty}\mathbb{P}_{\varepsilon,R}^\mathrm{orb}(\mathbf{X}_1,\dots,\mathbf{X}_n). 
\end{align*}
Here $K_\varepsilon(\Gamma)$ and $P_\varepsilon(\Gamma)$ for a subset $\Gamma$ in the metric space $\mathrm{U}(N)^n$ equipped with the metric $d_2((U_i)_{i=1}^n,(V_i)_{i=1}^n) := \sqrt{\sum_{i=1}^n \Vert U_i - V_i\Vert_{\mathrm{tr}_N,2}^2}$ denote the minimal number of $\varepsilon$-balls that cover $\Gamma$ and the maximal number of disjoint $\varepsilon$-balls inside $\Gamma$, respectively. {\rm(}Note that $P_\varepsilon(\Gamma) \geq K_{2\varepsilon}(\Gamma) \geq P_{4\varepsilon}(\Gamma)$ holds in general.{\rm)} Then we define 
$$
\delta_{1,\mathrm{orb}}(\mathbf{X}_1,\dots,\mathbf{X}_n) 
:= 
\limsup_{\varepsilon\searrow 0}\frac{\mathbb{K}^\mathrm{orb}_\varepsilon(\mathbf{X}_1,\dots,\mathbf{X}_n)}{|\log\varepsilon|} - n 
= \limsup_{\varepsilon\searrow 0}\frac{\mathbb{P}^\mathrm{orb}_\varepsilon(\mathbf{X}_1,\dots,\mathbf{X}_n)}{|\log\varepsilon|} - n.
$$
\end{definition} 

The next lemma can be shown in the exactly same way as in the proof of Lemma \ref{L-2-5}. 

\begin{lemma}\label{L-4-5} If $R > \max\{\Vert X_{ij}\Vert_\infty\,|\,1 \leq j \leq r(i), 1 \leq i \leq n\}$ possibly with $R = \infty$, then $\mathbb{K}_\varepsilon^\mathrm{orb}(\mathbf{X}_1,\dots,\mathbf{X}_n) = \mathbb{K}_{\varepsilon,R}^\mathrm{orb}(\mathbf{X}_1,\dots,\mathbf{X}_n)$ and $\mathbb{P}_\varepsilon^\mathrm{orb}(\mathbf{X}_1,\dots,\mathbf{X}_n) = \mathbb{P}_{\varepsilon,R}^\mathrm{orb}(\mathbf{X}_1,\dots,\mathbf{X}_n)$ for every $\varepsilon > 0$.  
\end{lemma}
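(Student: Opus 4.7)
The plan is to follow the proof of Lemma \ref{L-2-5} verbatim, replacing the monotonicity of the Haar measure by the monotonicity of the covering and packing numbers under set inclusion. First observe that both $\mathbb{K}^\mathrm{orb}_{\varepsilon,R}$ and $\mathbb{P}^\mathrm{orb}_{\varepsilon,R}$ are nondecreasing in $R$, since the defining supremum is taken over the larger ball $((M_N^\mathrm{sa})_R)^{r(i)}$. Hence for every $0 < R < \infty$ one has $\mathbb{K}^\mathrm{orb}_{\varepsilon,R}(\mathbf{X}_1,\dots,\mathbf{X}_n) \leq \mathbb{K}^\mathrm{orb}_\varepsilon(\mathbf{X}_1,\dots,\mathbf{X}_n) \leq \mathbb{K}^\mathrm{orb}_{\varepsilon,\infty}(\mathbf{X}_1,\dots,\mathbf{X}_n)$, and likewise for $\mathbb{P}^\mathrm{orb}$. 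Consequently, both claimed equalities reduce to the single inequality
$$
\mathbb{K}^\mathrm{orb}_{\varepsilon,\infty}(\mathbf{X}_1,\dots,\mathbf{X}_n) \leq \mathbb{K}^\mathrm{orb}_{\varepsilon,R}(\mathbf{X}_1,\dots,\mathbf{X}_n)
$$
for any finite $R > \rho := \max\{\Vert X_{ij}\Vert_\infty\}$, and the same inequality with $\mathbb{P}^\mathrm{orb}$ in place of $\mathbb{K}^\mathrm{orb}$.

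To prove this key inequality, fix $m \in \mathbb{N}$ and $\delta > 0$. The proof of Lemma \ref{L-2-5} produces an even $m' \geq m$ and $0 < \delta' < \min\{1,\delta/2\}$ (depending only on $m,\delta,R,\rho$) such that for every $N \in \mathbb{N}$ and every $\mathbf{A}_i \in \Gamma_\infty(\mathbf{X}_i\,;\,N,m',\delta')$, the truncation $f_R(\mathbf{A}_i) \in ((M_N^\mathrm{sa})_R)^{r(i)}$ satisfies the set inclusion
$$
\Gamma_\mathrm{orb}(\mathbf{X}_1,\dots,\mathbf{X}_n:(\mathbf{A}_i)_{i=1}^n\,;\,N,m',\delta') \subseteq \Gamma_\mathrm{orb}(\mathbf{X}_1,\dots,\mathbf{X}_n:(f_R(\mathbf{A}_i))_{i=1}^n\,;\,N,m,\delta).
$$
This set-theoretic inclusion is exactly the content of the main estimate in the proof of Lemma \ref{L-2-5}; only the very last line there invoked the right-invariance of the Haar measure, which we now bypass. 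For $\mathbf{A}_i$ lying outside $\Gamma_\infty(\mathbf{X}_i\,;\,N,m',\delta')$ for some $i$, the left-hand orbital microstate space is empty and the ensuing inequality is trivial.

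Since $K_\varepsilon$ and $P_\varepsilon$ are both monotone under set inclusion---any cover of a superset covers the subset, and disjoint $\varepsilon$-balls inside a subset are disjoint $\varepsilon$-balls inside the superset---the inclusion above immediately gives
$$
\log K_\varepsilon\bigl(\Gamma_\mathrm{orb}(\mathbf{X}_1,\dots,\mathbf{X}_n:(\mathbf{A}_i)_{i=1}^n\,;\,N,m',\delta')\bigr) \leq \log K_\varepsilon\bigl(\Gamma_\mathrm{orb}(\mathbf{X}_1,\dots,\mathbf{X}_n:(f_R(\mathbf{A}_i))_{i=1}^n\,;\,N,m,\delta)\bigr)
$$
and the same with $P_\varepsilon$ replacing $K_\varepsilon$. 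Taking the supremum over $\mathbf{A}_i \in (M_N^\mathrm{sa})^{r(i)}$ on the left and using $f_R(\mathbf{A}_i) \in ((M_N^\mathrm{sa})_R)^{r(i)}$ on the right, one obtains
$$
\mathbb{K}^\mathrm{orb}_{\varepsilon,\infty}(\mathbf{X}_1,\dots,\mathbf{X}_n\,;\,N,m',\delta') \leq \mathbb{K}^\mathrm{orb}_{\varepsilon,R}(\mathbf{X}_1,\dots,\mathbf{X}_n\,;\,N,m,\delta),
$$
and similarly for $\mathbb{P}^\mathrm{orb}$. Passing to $\limsup_{N\to\infty} N^{-2}$ and then letting $m\to\infty$, $\delta\searrow 0$ yields the desired inequality, since the double limit is an infimum over $(m,\delta)$.

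There is no serious obstacle: this is a mechanical transcription of the proof of Lemma \ref{L-2-5}, using the monotonicity of covering and packing numbers in place of the monotonicity of probability measures. The only point that requires a moment of care is reading Lemma \ref{L-2-5}'s proof as producing the set-level inclusion between orbital microstate spaces \emph{before} any measure-theoretic step; once that is isolated, both $\mathbb{K}^\mathrm{orb}$ and $\mathbb{P}^\mathrm{orb}$ follow by the same monotonicity argument in one stroke.
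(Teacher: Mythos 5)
Your proof is correct and is exactly the elaboration that the paper intends when it says Lemma \ref{L-4-5} ``can be shown in the exactly same way as in the proof of Lemma \ref{L-2-5}'': extract the set-level inclusion $\Gamma_\mathrm{orb}(\mathbf{X}_1,\dots,\mathbf{X}_n:(\mathbf{A}_i)\,;\,N,m',\delta') \subseteq \Gamma_\mathrm{orb}(\mathbf{X}_1,\dots,\mathbf{X}_n:(f_R(\mathbf{A}_i))\,;\,N,m,\delta)$, note that both $K_\varepsilon$ and $P_\varepsilon$ are monotone under inclusion, and then take suprema and limits. One small correction of description: the final step of Lemma \ref{L-2-5} does not invoke right-invariance of the Haar measure (that is used in Proposition \ref{P-2-3}), but only the monotonicity of $\gamma_{\mathrm{U}(N)}^{\otimes n}$ and of $\log$ under the same set inclusion; your substitution of that step by monotonicity of $K_\varepsilon$ and $P_\varepsilon$ is therefore the direct and intended analogue, so the slip is harmless.
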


The next proposition shows that Jung's approach still works well for $\delta_{0,\mathrm{orb}}(\mathbf{X}_1,\dots,\mathbf{X}_n)$ of arbitrary random multi-variables $\mathbf{X}_1,\dots,\mathbf{X}_n$.   

\begin{proposition}\label{P-4-6} We have $\delta_{0,\mathrm{orb}}(\mathbf{X}_1,\dots,\mathbf{X}_n) = \delta_{1,\mathrm{orb}}(\mathbf{X}_1,\dots,\mathbf{X}_n)$. 
\end{proposition}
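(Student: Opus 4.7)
The strategy is to extend the proof of \cite[Theorem 5.4]{HiaiMiyamotoUeda:IJM09} from the hyperfinite setting to the general one, exploiting the fact that Proposition \ref{P-2-4} and its $\chi_\mathrm{orb}(\,-:\mathbf{v})$-counterpart express $\chi_\mathrm{orb}$ and its conditional variant as suprema over $(\mathbf{A}_i) \in \prod_i ((M_N^\mathrm{sa})_R)^{r(i)}$ of the same shape as the suprema defining $\mathbb{K}^\mathrm{orb}_\varepsilon$ and $\mathbb{P}^\mathrm{orb}_\varepsilon$ in Definition \ref{D-4-3}. The whole proposition thereby reduces to a pointwise (in $(\mathbf{A}_i)$) comparison between $\gamma_{\mathrm{U}(N)}^{\otimes n}(\Gamma_\mathrm{orb}(v_i\mathbf{X}_iv_i^*:(\mathbf{A}_i):\mathbf{v}(\varepsilon);N,m,\delta))$ and the covering/packing numbers of $\Gamma_\mathrm{orb}(\mathbf{X}_1,\dots,\mathbf{X}_n:(\mathbf{A}_i);N,m,\delta)$.

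The bridge is a small-time concentration fact for free unitary Brownian motion: for given $(m,\delta)$ one can find $(m',\delta')$, $C > 0$ and $\varepsilon_0 > 0$ such that for $\varepsilon \in (0,\varepsilon_0)$ and $N$ large, every $(V_i)_{i=1}^n$ appearing in an $(m',\delta')$-matricial microstate of $\mathbf{v}(\varepsilon)$ satisfies $\Vert V_i - I_N\Vert_{2,\mathrm{tr}_N} \leq C\sqrt{\varepsilon}$. Combining this with a standard perturbation argument one shows that for every $(\mathbf{A}_i) \in \prod_i \Gamma_R(\mathbf{X}_i;N,m',\delta')$ the set $\Gamma_\mathrm{orb}(v_i\mathbf{X}_iv_i^*:(\mathbf{A}_i):\mathbf{v}(\varepsilon);N,m,\delta)$ is sandwiched between two $d_2$-neighborhoods of $\Gamma_\mathrm{orb}(\mathbf{X}_1,\dots,\mathbf{X}_n:(\mathbf{A}_i);N,m'',\delta'')$ of radius $O(\sqrt{\varepsilon})$: the easy inclusion uses only the concentration estimate, while the reverse one additionally invokes Voiculescu's asymptotic freeness of Haar unitaries from fixed matricial data. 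Elementary covering/packing inequalities then give, with $B_\rho$ denoting a $d_2$-ball of radius $\rho$ in $\mathrm{U}(N)^n$,
\[
P_{2C\sqrt{\varepsilon}}\big(\Gamma_\mathrm{orb}(\mathbf{X}_1,\dots:(\mathbf{A}_i))\big)\,\gamma_{\mathrm{U}(N)}^{\otimes n}(B_{C\sqrt{\varepsilon}}) \leq \gamma_{\mathrm{U}(N)}^{\otimes n}\big(\Gamma_\mathrm{orb}(v_i\mathbf{X}_iv_i^*:(\mathbf{A}_i):\mathbf{v}(\varepsilon))\big) \leq K_{C\sqrt{\varepsilon}}\big(\Gamma_\mathrm{orb}(\mathbf{X}_1,\dots:(\mathbf{A}_i))\big)\,\gamma_{\mathrm{U}(N)}^{\otimes n}(B_{2C\sqrt{\varepsilon}}).
\]

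Taking logarithms, supremizing over $(\mathbf{A}_i)$, dividing by $N^2$, passing to $\limsup_{N\to\infty}$ and then to $m\to\infty,\delta,\delta'\searrow 0$, and using the standard volume estimate $\log \gamma_{\mathrm{U}(N)}^{\otimes n}(B_\rho) = nN^2\log\rho + o(N^2)$, one arrives at
\[
\mathbb{P}^\mathrm{orb}_{2C\sqrt{\varepsilon}}(\mathbf{X}_1,\dots,\mathbf{X}_n) + n\log(C\sqrt{\varepsilon}) \leq \chi_\mathrm{orb}(v_i\mathbf{X}_iv_i^*:\mathbf{v}(\varepsilon)) \leq \mathbb{K}^\mathrm{orb}_{C\sqrt{\varepsilon}}(\mathbf{X}_1,\dots,\mathbf{X}_n) + n\log(2C\sqrt{\varepsilon}).
\]
Dividing by $|\log\sqrt{\varepsilon}|$, letting $\varepsilon\searrow 0$, and substituting $\varepsilon' := C\sqrt{\varepsilon}$ yields $\delta_{0,\mathrm{orb}} = \delta_{1,\mathrm{orb}}$ after invoking the identification of the $P_\rho$- and $K_\rho$-exponents noted in Definition \ref{D-4-3}. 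The main obstacle will be the sandwich identification above, which must be made uniform in $(\mathbf{A}_i)$: in the hyperfinite treatment of \cite[Theorem 5.4]{HiaiMiyamotoUeda:IJM09} one fixes canonical finite-dimensional approximates $\Xi_i(N)$ and applies Jung's theorem to them, whereas here $(\mathbf{A}_i)$ is the very variable over which the suprema in Proposition \ref{P-2-4} (and its conditional analogue) are taken, so the perturbation/freeness estimates must be arranged to hold uniformly over all admissible choices of $(\mathbf{A}_i)$.
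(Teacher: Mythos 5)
Your overall strategy matches the paper's: reduce via Proposition~\ref{P-2-4} (and its $\chi_\mathrm{orb}(\,-:\mathbf{v})$-counterpart) to a pointwise, $(\mathbf{A}_i)$-by-$(\mathbf{A}_i)$ comparison; exploit the small-time concentration $\Vert v_i(\varepsilon)-1\Vert_\infty\leq K\sqrt{\varepsilon}$ for free unitary Brownian motion; and translate into covering/packing numbers for $\Gamma_\mathrm{orb}(\mathbf{X}_1,\dots,\mathbf{X}_n:(\mathbf{A}_i))$. You also correctly flag the actual technical novelty over the hyperfinite case, namely that the relevant estimates must hold for arbitrary admissible microstates $(\mathbf{A}_i)$ and not just for a canonical $\Xi_i(N)$; the paper handles this by the same subsequence-and-near-optimizer extraction used throughout Sections~2--3, and the estimates from \cite[Eq.(5.4)--(5.6) and Prop.~5.6]{HiaiMiyamotoUeda:IJM09} are indeed uniform in the microstate because they depend only on $m,\delta,R$.

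There is, however, a genuine imprecision in the lower half of your ``sandwich.'' The upper inclusion is fine: given $(U_i)\in\Gamma_\mathrm{orb}(v_1(\varepsilon)\mathbf{X}_1v_1(\varepsilon)^*,\dots:(\mathbf{A}_i):\mathbf{v}(\varepsilon))$, the accompanying unitaries $(V_i)$ are $O(\sqrt{\varepsilon})$-close to $I_N$ and $(V_i^*U_i)$ is an orbital microstate of $\mathbf{X}_1,\dots,\mathbf{X}_n$, so $(U_i)$ lies in an $O(\sqrt{\varepsilon})$-neighborhood of $\Gamma_\mathrm{orb}(\mathbf{X}_1,\dots,\mathbf{X}_n:(\mathbf{A}_i))$, which yields the covering-number upper bound. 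But the reverse set-theoretic inclusion --- that an $O(\sqrt{\varepsilon})$-neighborhood of $\Gamma_\mathrm{orb}(\mathbf{X}_1,\dots,\mathbf{X}_n:(\mathbf{A}_i))$ lies inside the conditional orbital microstate set --- is false. If $(U_i)$ is merely $d_2$-close to an orbital microstate $(W_i)$, the natural witnesses $V_i:=U_iW_i^*$ are close to $I_N$ but in general do not approximate the $*$-distribution of $\mathbf{v}(\varepsilon)$, and no other choice of witnesses is available. What actually holds, and what \cite[Eq.(5.4)--(5.6)]{HiaiMiyamotoUeda:IJM09} proves, is a measure-theoretic statement: fixing a maximal $2\sqrt{Cn\varepsilon}$-separated family in $\Gamma_\mathrm{orb}(\mathbf{X}_1,\dots,\mathbf{X}_n:(\mathbf{A}_i))$, the translated sets $\{(V_iW_i)\,:\,(V_i)\in\Gamma(\mathbf{v}(\varepsilon);N,m,\delta')\}$ are pairwise disjoint, and for each center at least half the $\gamma_{\mathrm{U}(N)}^{\otimes n}$-mass of $\Gamma(\mathbf{v}(\varepsilon);N,m,\delta')$ lands in the conditional orbital set by Voiculescu's asymptotic freeness. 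This gives $\tfrac12\,P_{2\sqrt{Cn\varepsilon}}(\Gamma_\mathrm{orb})\cdot\gamma_{\mathrm{U}(N)}^{\otimes n}(\Gamma(\mathbf{v}(\varepsilon)))\leq\gamma_{\mathrm{U}(N)}^{\otimes n}(\Gamma_\mathrm{orb}(\cdot:\mathbf{v}(\varepsilon)))$. Your displayed inequality is the right conclusion once one replaces $\gamma(B_{C\sqrt{\varepsilon}})$ by $\gamma(\Gamma(\mathbf{v}(\varepsilon)))$ and controls the latter via \cite[Lemma 5.5]{HiaiMiyamotoUeda:IJM09} and $\chi_u(v_i(\varepsilon))$, both of which scale like $N^2\log\sqrt{\varepsilon}$ --- but it is obtained from a disjointness-plus-measure argument, not from a lower sandwich inclusion.
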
 
\begin{proof} Let $R > \max\{\Vert X_{ij}\Vert_\infty\,|\,1 \leq j \leq r(i), 1 \leq i \leq n\}$ be fixed throughout. 

By \cite[Lemma 8]{Biane:FieldsInstCommun97} there is $K > 0$ so that $\Vert v_i(t) - 1\Vert_\infty \leq K\sqrt{t}$ for every $0 \leq t \leq 1$. Set $C := K^2 + 2$. 

Let us first prove inequality `$\geq$' in the desired assertion. 
Let $m \in \mathbb{N}$ and $\delta, \varepsilon > 0$ be arbitrary with $\varepsilon \leq 1$. Then we can choose $0 < \delta' < \min\{\varepsilon,\delta\}$ in such a way that for every $N \in \mathbb{N}$ we have: If $(\mathbf{A}_i)_{i=1}^n \in \Gamma_R(\mathbf{X}_1\sqcup\cdots\sqcup\mathbf{X}_n\,;\,N,m,\delta')$ and $(V_i)_{i=1}^n \in \Gamma(\mathbf{v}(\varepsilon)\,;\,N,m,\delta')$ are $(3m,\delta')$-free, then $((V_i\mathbf{A}_i V_i^*)_{i=1}^n,(V_i)_{i=1}^n)$ falls in $\Gamma_R(v_1(\varepsilon)\mathbf{X}_1 v_1(\varepsilon)^*\sqcup\cdots\sqcup v_n(\varepsilon)\mathbf{X}_n v_n(\varepsilon)^*,\mathbf{v}(\varepsilon)\,;\,N,m,\delta)$, implying that $(V_i\mathbf{A}_i V_i^*)_{i=1}^n \in \Gamma_R(v_1(\varepsilon)\mathbf{X}_1 v_1(\varepsilon)^*\sqcup\cdots\sqcup v_n(\varepsilon)\mathbf{X}_n v_n(\varepsilon)^*:\mathbf{v}(\varepsilon)\,;\,N,m,\delta)$. We may and do assume that $\delta_{1,\mathrm{orb}}(\mathbf{X}_1,\dots,\mathbf{X}_n) > -\infty$. Thanks to Lemma \ref{L-4-5} there is a subsequence $N_1 < N_2 < \cdots$ such that $\mathbb{P}_{2\sqrt{Cn\varepsilon},R}^\mathrm{orb}(\mathbf{X}_1,\dots,\mathbf{X}_n\,;\,N_k,m,\delta') > -\infty$ for all $k \in \mathbb{N}$ and 
$$
\limsup_{N\to\infty}\frac{1}{N^2}\mathbb{P}_{2\sqrt{Cn\varepsilon},R}^\mathrm{orb}(\mathbf{X}_1,\dots,\mathbf{X}_n\,;\,N,m,\delta') = 
\lim_{k\to\infty}\frac{1}{N_k^2}\mathbb{P}_{2\sqrt{Cn\varepsilon},R}^\mathrm{orb}(\mathbf{X}_1,\dots,\mathbf{X}_n\,;\,N_k,m,\delta'). 
$$
Then for each $k \in \mathbb{N}$ one can choose $\mathbf{A}_i^{(k)} \in ((M_N^\mathrm{sa})_R)^{r(i)}$, $1 \leq i \leq n$, in such a way that 
\begin{align*}
\mathbb{P}_{2\sqrt{Cn\varepsilon},R}^\mathrm{orb}&(\mathbf{X}_1,\dots,\mathbf{X}_n\,;\,N_k,m,\delta') - 1 \\
&< 
\log\Big(P_{2\sqrt{Cn\varepsilon}}\big(\Gamma_\mathrm{orb}(\mathbf{X}_1,\dots,\mathbf{X}_n:(\mathbf{A}_i^{(k)})_{i=1}^n\,;\,N_k,m,\delta')\big)\Big).
\end{align*}
The argument proving \cite[Eq.(5.4)-(5.6)]{HiaiMiyamotoUeda:IJM09} with replacing $N$, $\Xi_i(N)$'s, $\delta$ and $\rho$ there by $N_k$, $\mathbf{A}_i^{(k)}$'s, $\delta'$ and $\delta$, respectively, shows that  
\begin{align*} 
&\frac{1}{2} P_{2\sqrt{Cn\varepsilon}}\big(\Gamma_\mathrm{orb}(\mathbf{X}_1,\dots,\mathbf{X}_n:(\mathbf{A}_i^{(k)})_{i=1}^n\,;\,N_k,m,\delta')\big)\times\gamma_{\mathrm{U}(N_k)}^{\otimes n}\big(\Gamma(\mathbf{v}(\varepsilon)\,;\,N_k,m,\delta')\big) \\
&\leq 
\gamma_{\mathrm{U}(N_k)}^{\otimes n}\big(\Gamma_\mathrm{orb}(v_1(\varepsilon)\mathbf{X}_1 v_1(\varepsilon)^*,\dots,v_n(\varepsilon)\mathbf{X}_n v_n(\varepsilon)^*:(\mathbf{A}_i^{(k)})_{i=1}^n:\mathbf{v}(\varepsilon)\,;\,N_k,m,\delta)\big)
\end{align*} 
for all sufficiently large $k \in \mathbb{N}$. Therefore, we have 
\begin{align*} 
&\limsup_{N\to\infty}\frac{1}{N^2}\bar{\chi}_{\mathrm{orb},R}(v_1(\varepsilon)\mathbf{X}_1 v_1(\varepsilon)^*,\dots,v_n(\varepsilon)\mathbf{X}_n v_n(\varepsilon)^*:\mathbf{v}(\varepsilon)\,;\,N,m,\delta) \\
&\geq 
\limsup_{k\to\infty}\frac{1}{N_k^2}\log\Big(\gamma_{\mathrm{U}(N_k)}^{\otimes n}\big(\Gamma_\mathrm{orb}(v_1(\varepsilon)\mathbf{X}_1 v_1(\varepsilon)^*,\dots,v_n(\varepsilon)\mathbf{X}_n v_n(\varepsilon)^*: \\
&\phantom{aaaaaaaaaaaaaaaaaaaaaaaaaaaaaaaaaaaaaaaaaaaaaai}
(\mathbf{A}_i^{(k)})_{i=1}^n:\mathbf{v}(\varepsilon)\,;\,N_k,m,\delta)\big)\Big) \\
&\geq 
\limsup_{k\to\infty}\frac{1}{N_k^2}\Big[
\log\big(P_{2\sqrt{Cn\varepsilon}}\big(\Gamma_\mathrm{orb}(\mathbf{X}_1,\dots,\mathbf{X}_n:(\mathbf{A}_i^{(k)})_{i=1}^n\,;\,N_k,m,\delta')\big)\big) \\
&\phantom{aaaaaaaaaaaaaaaaaaaaaaaaaaaaaa}+ 
\log\big(\gamma_{\mathrm{U}(N_k)}^{\otimes n}\big(\Gamma(\mathbf{v}(\varepsilon)\,;\,N_k,m,\delta')\big)\big) - \log2\Big] \\
&\geq 
\liminf_{k\to\infty}\frac{1}{N_k^2}\log\big(P_{2\sqrt{Cn\varepsilon}}\big(\Gamma_\mathrm{orb}(\mathbf{X}_1,\dots,\mathbf{X}_n:(\mathbf{A}_i^{(k)})_{i=1}^n\,;\,N_k,m,\delta')\big)\big) \\
&\phantom{aaaaaaaaaaaaaaaaaaa}+ 
\liminf_{k\to\infty}\frac{1}{N_k^2}\log\big(\gamma_{\mathrm{U}(N_k)}^{\otimes n}\big(\Gamma(\mathbf{v}(\varepsilon)\,;\,N_k,m,\delta')\big)\big) \\
&\geq 
\liminf_{k\to\infty}\frac{1}{N_k^2}\Big(\mathbb{P}_{2\sqrt{Cn\varepsilon},R}^\mathrm{orb}(\mathbf{X}_1,\dots,\mathbf{X}_n\,;\,N_k,m,\delta') - 1 \Big) \\
&\phantom{aaaaaaaaaaaaaaaaaaa}+ 
\liminf_{k\to\infty}\frac{1}{N_k^2}\log\big(\gamma_{\mathrm{U}(N_k)}^{\otimes n}\big(\Gamma(\mathbf{v}(\varepsilon)\,;\,N_k,m,\delta')\big)\big) \\
&\geq 
\lim_{k\to\infty}\frac{1}{N_k^2}\mathbb{P}_{2\sqrt{Cn\varepsilon},R}^\mathrm{orb}(\mathbf{X}_1,\dots,\mathbf{X}_n\,;\,N_k,m,\delta') + 
\liminf_{k\to\infty}\frac{1}{N_k^2}\log\big(\gamma_{\mathrm{U}(N_k)}^{\otimes n}\big(\Gamma(\mathbf{v}(\varepsilon)\,;\,N_k,m,\delta')\big)\big) \\
&\geq 
\mathbb{P}_{2\sqrt{Cn\varepsilon}}^\mathrm{orb}(\mathbf{X}_1,\dots,\mathbf{X}_n) + \sum_{i=1}^n \chi_u(v_i(\varepsilon)), 
\end{align*}
where we use Lemma \ref{L-4-5} and \cite[Lemma 5.5]{HiaiMiyamotoUeda:IJM09} in the last line. This implies the desired inequality as in the proof of \cite[Proposition 5.6]{HiaiMiyamotoUeda:IJM09}.  

Let us next prove inequality `$\leq$' in the desired assertion. To do so it suffices to prove that 
$$
\chi_\mathrm{orb}(v_1(\varepsilon)\mathbf{X}_1 v_1(\varepsilon)^*,\dots,v_n(\varepsilon)\mathbf{X}_n v_n(\varepsilon)^*:\mathbf{v}(\varepsilon)) 
\leq 
\mathbb{K}_{\sqrt{\varepsilon}}^\mathrm{orb}(\mathbf{X}_1,\dots,\mathbf{X}_n) + n \log\sqrt{\varepsilon} + \text{Const.}
$$
for every $\varepsilon > 0$. We may and do assume that $\chi_\mathrm{orb}(v_1(\varepsilon)\mathbf{X}_1 v_1(\varepsilon)^*,\dots,v_n(\varepsilon)\mathbf{X}_n v_n(\varepsilon)^*:\mathbf{v}(\varepsilon)) > -\infty$. Let $m \in \mathbb{N}$ and $\delta > 0$ be arbitrary. Thanks to the $\chi_\mathrm{orb}(-:\mathbf{v})$-counterpart of Lemma \ref{L-2-5}  there is a subsequence $N_1 < N_2 < \cdots$ such that $\bar{\chi}_{\mathrm{orb},R}(v_1(\varepsilon)\mathbf{X}_1 v_1(\varepsilon)^*,\dots,v_n(\varepsilon)\mathbf{X}_n v_n(\varepsilon)^*:\mathbf{v}(\varepsilon)\,;\,N_k,3m,\delta) > -\infty$ for all $k \in \mathbb{N}$ and 
\begin{align*} 
&\limsup_{N\to\infty}\frac{1}{N^2}\bar{\chi}_{\mathrm{orb},R}(v_1(\varepsilon)\mathbf{X}_1 v_1(\varepsilon)^*,\dots,v_n(\varepsilon)\mathbf{X}_n v_n(\varepsilon)^*:\mathbf{v}(\varepsilon)\,;\,N,3m,\delta) \\
&= 
\lim_{k\to\infty}\frac{1}{N_k^2}\bar{\chi}_{\mathrm{orb},R}(v_1(\varepsilon)\mathbf{X}_1 v_1(\varepsilon)^*,\dots,v_n(\varepsilon)\mathbf{X}_n v_n(\varepsilon)^*:\mathbf{v}(\varepsilon)\,;\,N_k,3m,\delta). 
\end{align*} 
For each $k \in \mathbb{N}$ one can choose $\mathbf{A}_i^{(k)} \in ((M_N^\mathrm{sa})_R)^{(r(i)}$, $1 \leq i \leq n$, in such a way that 
\begin{align*} 
&\bar{\chi}_{\mathrm{orb},R}(v_1(\varepsilon)\mathbf{X}_1 v_1(\varepsilon)^*,\dots,v_n(\varepsilon)\mathbf{X}_n v_n(\varepsilon)^*:\mathbf{v}(\varepsilon)\,;\,N_k,3m,\delta) - 1 \\
&< 
\log\Big(\gamma_{\mathrm{U}(N_k)}^{\otimes n}\big(\Gamma_\mathrm{orb}(v_1(\varepsilon)\mathbf{X}_1 v_1(\varepsilon)^*,\dots,v_n(\varepsilon)\mathbf{X}_n v_n(\varepsilon)^*:(\mathbf{A}_i^{(k)})_{i=1}^n:\mathbf{v}(\varepsilon)\,;\,N_k,3m,\delta)\big)\Big). 
\end{align*} 
Then the last half of the proof of \cite[Proposition 5.6]{HiaiMiyamotoUeda:IJM09} with replacing $N$ and $\Xi_i(N)$'s there by $N_k$  and $\mathbf{A}_i^{(k)}$'s, respectively, shows that 
\begin{align*} 
&\gamma_{\mathrm{U}(N_k)}^{\otimes n}\big(\Gamma_\mathrm{orb}(v_1(\varepsilon)\mathbf{X}_1 v_1(\varepsilon)^*,\dots,v_n(\varepsilon)\mathbf{X}_n v_n(\varepsilon)^*:(\mathbf{A}_i^{(k)})_{i=1}^n:\mathbf{v}(\varepsilon)\,;\,N_k,3m,\delta)\big) \\
&\leq 
K_{\sqrt{\varepsilon}}\big(\Gamma_\mathrm{orb}(\mathbf{X}_1,\dotsm\mathbf{X}_n:(\mathbf{A}_i^{(k)})_{i=1}^n\,;\,N_k,m,\delta)\big) \times C'((\sqrt{Cn}+1)\sqrt{\varepsilon})^{nN_k^2}
\end{align*} 
for some $C' > 0$, which is independent of $k \in \mathbb{N}$. Therefore, we have 
\begin{align*} 
&\chi_\mathrm{orb}(v_1(\varepsilon)\mathbf{X}_1 v_1(\varepsilon)^*,\dots,v_n(\varepsilon)\mathbf{X}_n v_n(\varepsilon)^*:\mathbf{v}(\varepsilon)) \\
&= 
\chi_{\mathrm{orb},R}(v_1(\varepsilon)\mathbf{X}_1 v_1(\varepsilon)^*,\dots,v_n(\varepsilon)\mathbf{X}_n v_n(\varepsilon)^*:\mathbf{v}(\varepsilon)) \\
&\leq 
\lim_{k\to\infty}\frac{1}{N_k^2}\bar{\chi}_{\mathrm{orb},R}(v_1(\varepsilon)\mathbf{X}_1 v_1(\varepsilon)^*,\dots,v_n(\varepsilon)\mathbf{X}_n v_n(\varepsilon)^*:\mathbf{v}(\varepsilon)\,;\,N_k,3m,\delta) \\
&\leq 
\limsup_{k\to\infty}\frac{1}{N_k^2}
\Big[\log\Big(\gamma_{\mathrm{U}(N_k)}^{\otimes n}\big(\Gamma_\mathrm{orb}(v_1(\varepsilon)\mathbf{X}_1 v_1(\varepsilon)^*,\dots,v_n(\varepsilon)\mathbf{X}_n v_n(\varepsilon)^*:\\
&\phantom{aaaaaaaaaaaaaaaaaaaaaaaaaaaaaaaaaaa}
(\mathbf{A}_i^{(k)})_{i=1}^n:\mathbf{v}(\varepsilon)\,;\,N_k,3m,\delta)\big)\Big)+1\Big] \\
&\leq 
\Big[\limsup_{k\to\infty}\frac{1}{N_k^2}\log\Big(K_{\sqrt{\varepsilon}}\big(\Gamma_\mathrm{orb}(\mathbf{X}_1,\dotsm\mathbf{X}_n:(\mathbf{A}_i^{(k)})_{i=1}^n\,;\,N_k,m,\delta)\big)\Big)\Big] + n\log\sqrt{\varepsilon} + \text{Const.} \\
&\leq 
\Big[\limsup_{k\to\infty}\frac{1}{N_k^2}\mathbb{K}_{\sqrt{\varepsilon},R}^\mathrm{orb}(\mathbf{X}_1,\dotsm\mathbf{X}_n\,;\,N_k,m,\delta)\Big] + n\log\sqrt{\varepsilon} + \text{Const.} \\
&\leq 
\Big[\limsup_{N\to\infty}\frac{1}{N^2}\mathbb{K}_{\sqrt{\varepsilon},R}^\mathrm{orb}(\mathbf{X}_1,\dotsm\mathbf{X}_n\,;\,N,m,\delta)\Big] + n\log\sqrt{\varepsilon} + \text{Const.}, 
\end{align*}
where the first equality is due to the $\chi_\mathrm{orb}(-:\mathbf{v})$-counterpart of Lemma \ref{L-2-5}. This together with Lemma \ref{L-4-5} immediately implies the desired inequality. 
\end{proof} 

As mentioned in the introduction it was shown in \cite[Theorem 5.8]{HiaiMiyamotoUeda:IJM09} that
$$
\delta_0(\mathbf{X}_1,\dots,\mathbf{X}_n) = 
\delta_{0,\mathrm{orb}}(\mathbf{X}_1,\dots,\mathbf{X}_n) + \sum_{i=1}^n \delta_0(\mathbf{X}_i) 
$$
holds for arbitrary hyperfinite random multi-variables $\mathbf{X}_1,\dots,\mathbf{X}_n$. 
It would be nice if one could prove even inequality `$\leq$' in the above without the hyperfiniteness assumption. However, it seems difficult at the moment to do so; one reason is that the proof of \cite[Theorem 5.8]{HiaiMiyamotoUeda:IJM09} heavily depends upon the thorough investigation on $\delta_0(\mathbf{X})$ when $W^*(\mathbf{X})$ is hyperfinite due to Jung \cite{Jung:TAMS03}. 

\section*{Appendix: Talagrand's Inequality for $\chi_\mathrm{orb}$} 

Set $R := \max\{\Vert X_{ij}\Vert_\infty\,|\,1 \leq j \leq r(i), 1 \leq i \leq n\}$. Let $\mathcal{A}_R$ be the universal free product $C^*$-algebra of $r(1)+\cdots+r(n)$-copies of $C[-R,R]$. Then we can define the tracial states $\tau_{(\mathbf{X}_1,\dots,\mathbf{X}_n)}$ and $\tau^\mathrm{free}_{(\mathbf{X}_1,\dots,\mathbf{X}_n)}$ on $\mathcal{A}_R$ as in the proof of \cite[Proposition 4.4 (8)]{HiaiMiyamotoUeda:IJM09}. We may and do assume that $\chi_\mathrm{orb}(\mathbf{X}_1,\dots,\mathbf{X}_n) > -\infty$. Then one can choose a subsequence $N_1 < N_2 < \cdots$ in such a way that $\bar{\chi}_{\mathrm{orb},R}(\mathbf{X}_1,\dots,\mathbf{X}_n\,;\,N_m,m,1/m) > -\infty$ for every $m \in \mathbb{N}$ and 
$$
\chi_\mathrm{orb}(\mathbf{X}_1,\dots,\mathbf{X}_n) = 
\lim_{m\to\infty}\frac{1}{N_m^2}\bar{\chi}_{\mathrm{orb},R}(\mathbf{X}_1,\dots,\mathbf{X}_n\,;\,N_m,m,1/m),  
$$
where Corollary \ref{C-2-7} is used. Remark that it makes no change to replace $\mathrm{U}(N)$ by $\mathrm{SU}(N)$ in Proposition \ref{P-2-4}. (In fact, this follows from that the inverse image of $\Gamma_\mathrm{orb}(\mathbf{X}_1,\dots,\mathbf{X}_n:(\mathbf{A}_i)_{i=1}^n\,;\,N,m,\delta)$ under the surjection $(\zeta,V) \in \mathbb{T}\times\mathrm{SU}(N) \mapsto \zeta V \in \mathrm{U}(N)$ is the product of $\mathbb{T}$ and $\mathrm{SU}(N) \cap \Gamma_\mathrm{orb}(\mathbf{X}_1,\dots,\mathbf{X}_n:(\mathbf{A}_i)_{i=1}^n\,;\,N,m,\delta)$ and that the push-forward of the Haar probability measure of $\mathbb{T}\times\mathrm{SU}(N)$ under the surjection is $\gamma_{\mathrm{U}(N)}$.) Thus, for each $m \in \mathbf{N}$ one can choose $\mathbf{A}_i^{(m)} \in ((M_N^\mathrm{sa})_R)^{r(i)}$ in such a way that 
$$
\bar{\chi}_{\mathrm{orb},R}(\mathbf{X}_1,\dots,\mathbf{X}_n\,;\,N_m,m,1/m) - 1 < 
\log\Big(\gamma_{\mathrm{SU}(N_m)}^{\otimes n}\big(\Gamma_m\big)\Big), 
$$ 
with $\Gamma_m := \mathrm{SU}(N_m) \cap \Gamma_\mathrm{orb}(\mathbf{X}_1,\dots,\mathbf{X}_n:(\mathbf{A}_i^{(m)})_{i=1}^n\,;\,N_m,m,1/m)$. The same argument as in \cite[\S3 and Proposition 4.4 (8)]{HiaiMiyamotoUeda:IJM09} with replacing $\Xi_i(N_m)$'s there by 
$\mathbf{A}_i^{(m)}$'s shows that 
\begin{align*} 
W_{2,\mathrm{free}}&(\tau_{(\mathbf{X}_1,\dots,\mathbf{X}_n)},\tau^\mathrm{free}_{(\mathbf{X}_1,\dots,\mathbf{X}_n)}) 
\leq 
4R\sqrt{r}\liminf_{m\to\infty}\sqrt{-\frac{1}{N_m^2}\log\Big(\gamma_{\mathrm{SU}(N_m)}^{\otimes n}\big(\Gamma_m\big)\Big)} \\
&\leq 4R\sqrt{r}\sqrt{-\limsup_{m\to\infty}\frac{1}{N_m^2}\big(\bar{\chi}_{\mathrm{orb},R}(\mathbf{X}_1,\dots,\mathbf{X}_n\,;\,N_m,m,1/m)-1\big)} \\
&= 4R\sqrt{r}\sqrt{-\chi_\mathrm{orb}(\mathbf{X}_1,\dots,\mathbf{X}_n)}  
\end{align*} 
with $r := \max\{r(i)\,|\,1\leq i \leq n\}$. At this point we thank the referee who made us aware  that the constant $\sqrt{r}$ above is missing in the previous version of this paper and also in the proof of \cite[Proposition 4.4 (8)]{HiaiMiyamotoUeda:IJM09}. The constant $\sqrt{r}$ appears in the estimate comparing the free Wasserstein distance for tracial states on $\mathcal{A}_R$ and the ordinary one for measures on $SU(N)^n$ (see the proof of \cite[Lemma 3.4]{HiaiMiyamotoUeda:IJM09} which deals with only the case when all $r(i)=1$, that is, $r=1$ in the case).   

\section*{Added in proof} 

A preprint version of \cite{CollinsKemp:JFA14} due to Collins and Kemp appeared after the release of the present paper as a preprint. Their paper focuses in part on giving a proof of $i^*=-\chi_\mathrm{orb}$ for two projections of special kind along the lines of our previous heuristic argument in \cite{HiaiUeda:AIHP09}. Our proof (or approach) mentioned in the introduction is certainly different from and much simpler than theirs, and it is now available with stronger results in \cite{IzumiUeda:Preprint13}. We have two different approaches now, and it is probably interesting to compare those thoroughly.   

\section*{Acknowledgments} We thank Professor Fumio Hiai for some comments to a draft of this paper. We are also very grateful to the anonymous referee for reading the paper very carefully and giving detailed comments. 
}


\begin{thebibliography}{99}

\bibitem{BelinschiBercovici:PacificJMath03} S.T.~Belinschi and H.~Bercovici, A property of free entropy, {\it Pacific J.~Math.}, {\bf 211} (2003), 35--40. 

\bibitem{Biane:FieldsInstCommun97}
P. Biane, Free Brownian motion, free stochastic calculus and random matrices,
in {\it Free Probability Theory}, D.V.~Voiculescu (ed.), Fields Inst. Commun., {\bf 12}, Amer.~Math.~Soc., 1997, pp.~1--19.

\bibitem{BianeDabrowski:AdvMath13} Ph.~Biane and Y.~Dabrowski, 
Concavification of free entropy, 
{\it Adv.~Math.}, {\bf 234} (2013), 667--696.

\bibitem{CollinsKemp:JFA14} B.~Collins and T.~Kemp, Liberating projections, {\it J.~Funct.~Anal.}, {\bf 266} (2014), 1988--2052.

\bibitem{HiaiMiyamotoUeda:IJM09} F.~Hiai, T.~Miyamoto and Y.~Ueda, 
Orbital approach to microstate free entropy, 
{\it Internat.~J.~Math.}, {\bf 20} (2009), 227--273.

\bibitem{HiaiPetz:Book}
F. Hiai and D. Petz, {\it The Semicircle Law, Free Random Variables
and Entropy}, Mathematical Surveys and Monographs, Vol. 77, Amer. Math. Soc.,
Providence, 2000.

\bibitem{HiaiUeda:AIHP09} F.~Hiai and Y.~Ueda, 
A log-Sobolev type inequality for free entropy of two projections, {\it Ann.~Inst.~Henri Poincar\'{e} Probab.~Stat.}, {\bf 45} (2009), 239--249.

\bibitem{IzumiUeda:Preprint13} M.~Izumi and Y.~Ueda, Remarks on free mutual information and orbital free entropy, arXiv:1306.5372

\bibitem{Jung:TAMS03} K.~Jung, The free entropy dimension of hyperfinite von Neumann algebras, {\it Trans.~Amer.~Math.~Soc.}, {\bf 355} (2003), 5053--5089.

\bibitem{Jung:PacificJMath03}
K. Jung, A free entropy dimension lemma,
{\it Pacific J. Math.} {\bf 211} (2003), 265--271.

\bibitem{Jung:MathAnn07}
K.~Jung, Amenability, tubularity, and embeddings into $\mathcal{R}^\omega$, 
{\it Math.~Ann.}, {\bf 338} (2007), 241--248. 

\bibitem{Rudin:RedBook} 
W.~Rudin, {\it Real and Complex Analysis, Third Edition}, McGraw-Hill, 1987. 

\bibitem{Voiculescu:InventMath94}
D. Voiculescu, The analogues of entropy and of Fisher's information measure
in free probability theory, II, {\it Invent. Math.} {\bf 118} (1994), 411--440.

\bibitem{Voiculescu:IMRN98} 
D. Voiculescu, A strengthened asymptotic freeness result for random matrices with applications to free entropy, {\it Int. Math. Res. Not.} {\bf 1998}, 41--63.

\bibitem{Voiculescu:AdvMath99}
D. Voiculescu, The analogue of entropy and of Fisher's information measure in free probability theory VI: Liberation and mutual free information,
{\it Adv. Math.} {\bf 146} (1999), 101--166.

\end{thebibliography}
\end{document}